\title{Bergman interpolation on finite Riemann surfaces.\\
Part II:  Poincar\'e-Hyperbolic Case}
\author{Dror Varolin} 
\email{dror@math.sunysb.edu}
\address{Department of Mathematics \newline \indent Stony Brook University \newline \indent Stony Brook, NY 11794-3651}
\newcommand{\noi}{\noindent}
\newcommand{\co}{{\mathcal O}}
\newcommand{\sa}{{\mathscr A}}
\newcommand{\sC}{{\mathscr C}}
\newcommand{\se}{{\mathscr E}}
\newcommand{\sh}{{\mathscr H}}
\newcommand{\sL}{{\mathscr L}}
\newcommand{\sr}{{\mathscr R}}
\newcommand{\fl}{{\mathfrak l}}
\newcommand{\fp}{{\mathfrak p}}
\newcommand{\vp}{\varphi} 
\newcommand{\ve}{\varepsilon}
\newcommand{\A}{{\mathbb A}}
\newcommand{\C}{{\mathbb C}}
\newcommand{\D}{{\mathbb D}}
\newcommand{\h}{{\mathbb H}}
\newcommand{\N}{{\mathbb N}}
\newcommand{\p}{{\mathbb P}}
\newcommand{\R}{{\mathbb R}}
\newcommand{\Z}{{\mathbb Z}}
\newcommand{\red}{\hfill $\diamond$}
\newcommand{\di}{\partial}
\newcommand{\dbar}{\bar \partial}
\newcommand{\re}{{\rm Re\ }}
\newcommand{\im}{{\rm Im\ }}
\newcommand{\relcomp}{\subset \subset}
\newcommand{\ii}{\sqrt{-1}}
\newcommand{\tensor}{\otimes}
\def\XXint#1#2#3{{\setbox0=\hbox{$#1{#2#3}{\int}$} 
\vcenter{\hbox{$#2#3$}}\kern-.5\wd0}}
\begin{document}

\theoremstyle{plain}
\newtheorem{thm}{\sc Theorem}
\newtheorem*{s-thm}{\sc Theorem}
\newtheorem{lem}{\sc Lemma}[section]
\newtheorem{d-thm}[lem]{\sc Theorem}
\newtheorem{prop}[lem]{\sc Proposition}
\newtheorem{cor}[lem]{\sc Corollary}

\theoremstyle{definition}
\newtheorem{conj}[lem]{\sc Conjecture}
\newtheorem{prob}[lem]{\sc Open Problem}
\newtheorem{defn}[lem]{\sc Definition}
\newtheorem*{s-defn}{\sc Definition}
\newtheorem{qn}[lem]{\sc Question}
\newtheorem{ex}[lem]{\sc Example}
\newtheorem{rmk}[lem]{\sc Remark}
\newtheorem*{s-rmk}{\sc Remark}
\newtheorem{rmks}[lem]{\sc Remarks}
\newtheorem*{ack}{\sc Acknowledgment}

\maketitle

\setcounter{tocdepth}2

\vskip .3in

\begin{abstract}
We formulate the Bergman-type interpolation problem on finite open Riemann surfaces covered by the unit disk.  Our version of the interpolation problem generalizes Bergman-type interpolation problems previously studied by Seip, Berntsson, Ortega Cerd\`a, and a number of other authors.  We then prove sufficient conditions for a sequence to be interpolating.  When the curvature of the weight in question is bounded in an appropriate sense, we show that the sufficient conditions are almost necessary, but not quite.  The results extend work of Ortega Cerd\`a, who resolved the case in which the boundary of the surface is pure $1$-dimensional.  Our version of the interpolation problem effectively changes the geometry of the underlying space near the $0$-dimensional boundary components, or punctures, thereby linking in a crucial way with the previous article in this two-part series.
\end{abstract}



\section*{Introduction}

In this sequel to our paper \cite{v-rs1}, we continue our investigation of interpolation in Bergman spaces over finite Riemann surfaces.  Recall that for an open Riemann surface $X$ with conformal metric $\omega$ and weight function $\psi$, we defined the so-called (generalized) Bergman space   
\[
\sh ^2 (X, e^{-\psi}\omega) := \left \{ g \in \co (X)\ ;\ \int _X |g|^2 e^{-\psi} \omega < +\infty \right \}.
\]
In \cite{v-rs1} we defined another Hilbert space that measures the size of data along a closed discrete subset $\Gamma \subset X$.  In the present article, we change this Hilbert space slightly.  To define this slightly modified Hilbert space, we first recall that the {\it pointwise injectivity radius} of $x \in X$ is the number 
\[
\iota _{\omega}(x) := \sup \left \{ r>0 \ ;\ D^{\omega}_r (x) \text{ is contractible}\right \},
\]
where $D^{\omega}_r (x)$ denotes the set of all points whose $\omega$-geodesic distance to $x$ is less than $r$.  We define
\[
\hat \iota _{\omega}(x) := \min (\iota _{\omega}(x), 1),
\]
and let 
\[
A_{\omega}(x) := \int _{D_{\hat \iota _{\omega}(x)}(x)} \omega
\]
be the $\omega$-area of the disk $D_{\hat \iota _{\omega}(x)}(x)$.  Finally, given a closed discrete subset $\Gamma \subset X$, we  set 
\[
\ell ^2 (\Gamma , e^{-\psi}) := \left \{ f :\Gamma \to \C \ ;\ \sum _{\gamma \in \Gamma} |f(\gamma)|^2e^{-\psi(\gamma)}A_{\omega}(\gamma) < +\infty \right \}.
\]
\begin{s-rmk}
When $(X,\omega)$ is asymptotically flat, which was the case in \cite{v-rs1}, the areas $A_{\omega}(\gamma)$ are uniformly bounded above and below by a positive constants, and thus, from the point of view of the interpolation problem, our definitions here are essentially  generalizations of those of \cite{v-rs1}.  There are, of course, other possible generalizations, but this natural definition allows us to prove rather strong results.
\red
\end{s-rmk}
\noi We say that $\Gamma$ is an interpolation sequence (for the weight function $\psi$ and conformal metric $\omega$) if the restriction map 
\[
\sr _{\Gamma} : \sh ^2(X,e^{-\psi}\omega) \to \ell ^2 (\Gamma, e^{-\psi})
\]
is surjective.  For a given triple $(X, \omega , \psi)$, the goal is to characterize interpolation sequences $\Gamma$ in terms of geometric properties of $\Gamma$, preferably expressed using the metric $\omega$ and the weight function $\psi$.

In the present article, we focus on finite open Riemann surfaces $X$ whose universal cover is the unit disk (such Riemann surfaces were called {\it Poincar\'e-hyperbolic} in \cite{v-book}), and which have smooth boundary consisting of compact, $1$-dimensional boundary components (which we call {\it border curves}) and $0$-dimensional boundary components (which we call {\it punctures}).  Every such Riemann surface (or more generally, every Riemann surface covered by the disk) possesses a unique metric $\omega _P$ of curvature $-4$, which we call the Poincar\'e metric (hence the name "Poincar\'e-hyperbolic").  

The main result of the paper is the following theorem.

\begin{thm}\label{main}
Let $X$ be a finite open Riemann surface covered by the disk, with its Poincar\'e metric $\omega _P$.  Let $\vp \in \sC ^2(X)$ be a weight function satisfying the following conditions:  there exist positive constants $m$ and $M$ such that 
\begin{enumerate}
\item[($\star$)] each puncture $p_j$ is an isolated boundary point of an open set $P_j\subset X$ that is biholomorphic to the punctured disk, such that 
\[
4 \omega _P(\zeta) + m \omega^j _c(\zeta) \le \Delta \vp (\zeta) \le M\omega^j _c(\zeta), \quad \zeta \in P_j,
\]
where $\omega^j _c$ is the cylindrical metric in $P_j$, and 
\item[(B)] each border curve $\sC _j$ is the outer boundary of an open set $A_j$ that is biholomorphic to an annulus, such that 
\[
m \omega _P (\zeta) \le \Delta \vp (\zeta) -2 \omega _P(\zeta) \le M \omega _P, \quad \zeta \in A_j.
\]
\end{enumerate}
Let $\Gamma \subset X$ be a closed discrete subset.  Then the restriction map $\sr _{\Gamma} :\sh ^2 (X, e^{-\vp}\omega _P) \to \ell ^2 (\Gamma, e^{-\vp})$ is surjective if 
\begin{enumerate}
\item[(i+)] $\Gamma$ is uniformly separated, and 
\item[(ii+)] the asymptotic upper density $D^+_{\vp}(\Gamma)$ of $\Gamma$ is strictly less than $1$.
\end{enumerate}
Conversely, if $\sr _{\Gamma}$ is surjective, then 
\begin{enumerate}
\item[(i-)] $\Gamma$ is uniformly separated, and 
\item[(ii-)] $D^+_{\vp}(\Gamma) \le 1$, and if moreover $X$ has no isolated boundary components then $D^+_{\vp}(\Gamma) < 1$.
\end{enumerate}
\end{thm}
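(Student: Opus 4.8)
\noindent The plan is to treat the two implications by different methods: for sufficiency, the weighted $L^2$-estimates of H\"ormander for $\dbar$, applied with a carefully chosen singular weight and combined with the local interpolation theory near the border curves (the pure $1$-dimensional case) and near the punctures (the asymptotically-cylindrical case of \cite{v-rs1}); for the converse, the open mapping theorem together with a rescaling/normal-families argument reducing to model problems with known sharp thresholds. For the first half, given $f\in\ell^2(\Gamma,e^{-\vp})$, I would build a rough smooth interpolant $F=\sum_{\gamma\in\Gamma}f(\gamma)\rho_\gamma$, where $\rho_\gamma\equiv 1$ near $\gamma$ and is supported in a coordinate disk $U_\gamma$ of radius comparable to $\hat\iota_{\omega_P}(\gamma)$; the radius is measured in $\omega_P$ on the compact core and near each border curve $\sC_j$, and in the cylindrical metric $\omega^j_c$ inside each punctured-disk neighbourhood $P_j$, and (i+) keeps the $U_\gamma$ pairwise disjoint. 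One then solves $\dbar u=\dbar F$ with a weight $\psi=\vp+\sigma_\Gamma$, where $\sigma_\Gamma$ is globally defined on $X$, equals $\log|z_\gamma|^2+O(1)$ near each $\gamma$ in the appropriate coordinate $z_\gamma$, and satisfies $\Delta\psi\ge\ve\,\omega$ away from $\Gamma$, with $\omega$ the ambient metric ($\omega_P$ on the core and near the border curves, $\omega^j_c$ near the punctures). The pole of $\sigma_\Gamma$ forces $u(\gamma)=0$, so $G:=F-u\in\co(X)$ and $G|_\Gamma=f$.

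\smallskip

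\noindent Two quantitative points then remain. First, $\int_X|\dbar F|^2e^{-\psi}/\Delta\psi\lesssim\sum_{\gamma}|f(\gamma)|^2e^{-\vp(\gamma)}A_{\omega_P}(\gamma)=\|f\|^2$: this holds because $\dbar F$ is supported in the transition annuli, where $e^{-\psi}\asymp e^{-\vp}$, where $|\dbar\rho_\gamma|^2\lesssim A_{\omega_P}(\gamma)^{-1}$ by the choice of scale, and where $\Delta\psi\ge\ve\,\omega$. Second, after H\"ormander's estimate gives $\int_X|u|^2e^{-\psi}\lesssim\|f\|^2$, one recovers $\int_X|G|^2e^{-\vp}\omega_P\lesssim\|f\|^2$ using that $\sigma_\Gamma$ is bounded above off $\Gamma$, together with a sub-mean-value estimate on the disks $U_\gamma$ to absorb the contribution near $\Gamma$.

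\smallskip

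\noindent The construction of $\sigma_\Gamma$ is the heart of the proof and the step I expect to be the main obstacle. Simply cutting off each $\log|z_\gamma|^2$ at the scale $\hat\iota(\gamma)$ introduces in $\Delta\psi$ an error of size $\asymp\omega$, up to logarithmic factors near the cusps, which the gap terms $m\omega_P$ and $m\omega^j_c$ in (B) and ($\star$) cannot absorb when $m$ is small; one must instead redistribute the pole masses at the scale on which the counting density of $\Gamma$ stabilizes, for instance via a Beurling-type potential adapted to $\Gamma$ or a decomposition of $\Gamma$ into sparse subsequences. This is exactly where (ii+), $D^+_\vp(\Gamma)<1$, enters: it says the mass of $\Gamma$ in each large $\omega_P$- (resp.\ $\omega^j_c$-) disk is a fixed fraction $<1$ of $\int(\Delta\vp-2\omega_P)$ (resp.\ $\int(\Delta\vp-4\omega_P)$) over that disk, the subtracted $2\omega_P$ and $4\omega_P$ being precisely the curvature absorbed by the metric on $K_X$ and, near the punctures, by the change of geometry of \cite{v-rs1}. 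Near a puncture $\omega_P$ itself degenerates, so this redistribution must be carried out in the cylindrical coordinate; the lower bound $m\omega^j_c$ in ($\star$) is what makes that transfer quantitative, and it is here that the argument leans on Part I.

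\smallskip

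\noindent For the converse, assume $\sr_\Gamma$ is onto; by the open mapping theorem it has a bounded right inverse $E$. Applying $E$ to the unit mass at $\gamma$ gives $G_\gamma\in\sh^2$ with $G_\gamma(\gamma)=1$, $G_\gamma|_{\Gamma\setminus\{\gamma\}}=0$ and $\|G_\gamma\|^2\lesssim e^{\vp(\gamma)}A_{\omega_P}(\gamma)^{-1}$; combined with the sub-mean-value inequality for $|G_\gamma|^2e^{-\vp}$ on disks of radius $\hat\iota(\gamma)$ --- legitimate because (B) and ($\star$) bound the curvature of $\vp$ from above near the boundary, and trivially on the core --- this shows $\Gamma$ is uniformly separated in the relevant metric, i.e.\ (i$-$). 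For (ii$-$) I would run the standard rescaling argument: choose $z_k$ and radii $r_k$ realising the $\limsup$ defining $D^+_\vp(\Gamma)$, pull back by automorphisms (of the disk near the core and the border curves, of the cylinder when $z_k$ exits a puncture), and extract weak limits of the weighted data $(\omega,\vp,\Gamma)$ and of the uniformly bounded right inverses $E_k$. The limit is again an interpolation problem, for a model weight of constant effective curvature on the disk, on $\C$, or --- along a puncture --- on a cylinder; invoking the sharp threshold in each model (Seip's theorem in the disk and its Fock-space analogue on $\C$, and a de Branges / Paley--Wiener-type analysis on the cylinder) forces $D^+_\vp(\Gamma)\le 1$, with strict inequality unless a cuspidal model occurs, since only in the Paley--Wiener-type case is complete interpolation possible at critical density. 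Hence if $X$ has no isolated boundary components the inequality is strict. Isolating and analysing that cuspidal model, once more through the picture of Part I, is the second delicate point.
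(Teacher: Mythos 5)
Your overall architecture for sufficiency (rough interpolant, singular potential along $\Gamma$, $\dbar$-correction) matches the paper, and you have correctly identified the singular potential construction as the delicate step, but the $\dbar$-machinery you propose is wrong, and this is a genuine gap. You want to solve $\dbar u = \dbar F$ with H\"ormander's theorem on $(X,\omega_P)$. The Ricci form of the Poincar\'e metric is ${\rm Ricci}(\omega_P) = -4\omega_P$ (the curvature is $-4$), so H\"ormander requires $\Delta\psi + {\rm Ricci}(\omega_P) = \Delta\psi - 4\omega_P > 0$. Near a border curve, hypothesis (B) gives only $\Delta\vp \ge (2+m)\omega_P$; after subtracting the singular potential $\Upsilon^{\Gamma}_r$ using (ii+), the residual curvature is $\Delta\psi \ge (2+\ve' m)\omega_P$ for some small $\ve'>0$, which is not $> 4\omega_P$ when $m$ is small. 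H\"ormander therefore fails for precisely the weights allowed by (B). The paper replaces H\"ormander by the Donnelly--Fefferman--Ohsawa twisted estimate (its Theorems 1.3, 1.4 and 4.2), which needs only $\Delta\psi \ge 2(1+\ve)\omega_P$: the twist $e^{-\eta}$, with $\eta = \log\frac{1}{1-|z|^2}$ near borders and $\eta = -\log\log|z|^{-2}$ near punctures, supplies the missing $2\omega_P$ of positivity through $\Delta\eta - (1+\nu)\,\ii\,\di\eta\wedge\dbar\eta$. In the general surface this requires a global patching of the local $\eta_j$'s together with a ``bumper'' function (the paper's Lemma 4.4) to absorb the curvature error from the cutoffs; your plan has no substitute for this. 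This is not a matter of constants: without DFO (or some equivalent), the whole sufficiency proof collapses for the regime of small $m$ that Theorem 1 is designed to cover.

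Two further points. First, for the rough interpolant the paper does not use plain cutoffs $\rho_\gamma$; it uses local holomorphic extensions $g_\gamma$ produced by the Ohsawa--Takegoshi theorem and multiplied by cutoffs, together with holomorphic interpolants $F_j$ built on each end by the disk and punctured-disk theorems of Sections 2--3, glued across collars. This is what delivers the clean $L^2$ bound $\int |\tilde F|^2 e^{-\vp}\omega_P \lesssim \|f\|^2$ without chasing the variation of $\vp$ on the scale $\hat\iota_{\omega_P}(\gamma)$. Second, your converse argument (rescaling and extraction of weak limits of the right inverses, concluding from sharp model thresholds and a ``Paley--Wiener-type'' analysis on the cylinder) is genuinely different from the paper's and has the merit of being conceptually appealing, but it is substantially more than a sketch. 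The paper's route is more hands-on: it first shows that an interpolation sequence on $X$ restricts to interpolation sequences on the model spaces $\sh^2(\D,e^{-\hat\vp}\omega_P)$ and $\sh^2(\C^*,e^{-\hat\psi}\omega_c)$ (Theorems 3.17--3.18, via an iteration that trades holomorphy across a collar against a small $\dbar$-error), and then in those models uses perturbation of interpolation sequences (small perturbation and adding a point) together with the Poisson--Jensen formula to get $D^{b+}_{\vp}<1$ near border curves and $D^{*+}_{\vp}\le 1$ near punctures. Your proposal leaves the cylinder-model threshold and the extraction argument entirely unjustified, whereas the perturbation/Jensen route is concrete. Finally, a small bookkeeping slip: you attribute the $2\omega_P$ offset in the border density to ``the curvature absorbed by the metric on $K_X$,'' but the Ricci form is $-4\omega_P$; the $2\omega_P$ is instead the threshold of the twisted $L^2$ extension theorem, not of H\"ormander, and conflating the two is part of what leads you to the wrong $\dbar$-estimate.
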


\noi A few remarks regarding the precise meaning of some of the terms in Theorem \ref{main} are in order.

\begin{enumerate}
\item[(a)] The cylindrical metric in $\C^*$ is some constant multiple of the metric $\omega _c (z) = (2|z|^2)^{-1}\ii dz \wedge d\bar z$.  In a Riemann surface $X$ covered by the disk, one has special coordinates near punctures; coordinates  that are adapted to the hyperbolic geometry of $X$ inherited from the cover (cf. Section \ref{ends-section}).  In these coordinates, the cylindrical metric is given by the same formula.

\item[(b)] Uniform separation of a closed discrete subset is measured with respect to the geodesic distance of the cylindrical metric near the punctures, and of the Poincar\'e metric near the border curves.

\item[(c)] 
As in the prequel to this article, the asymptotic upper density $D^+_{\vp}(\Gamma)$ is the least upper bound of certain weighted densities of points of $\Gamma$ in large geodesic disks (for the hyperbolic metric, except near the puncture, in which case the disks are geodesic for the cylindrical metric), the least upper bound being taken with respect to the centers of these disks.  Later in the introduction we will give a slightly imprecise version of the definition, and the precise definition will be given in Section \ref{main-proof}, after the definition of density has been given for the punctured disk.

\item[(d)] Since $\vp$ is smooth, there exists a smooth, positive $(1,1)$-form $\Theta$ on $X$ such that $\Delta \vp \ge - \Theta$.
\end{enumerate}

As crucially observed by Ortega Cerd\`a in \cite{quim-rs}, since $\Gamma$ is closed and discrete, defining (and computing) the density reduces to doing so near the boundary of $X$.  In fact, the density remains unchanged if one discards any finite subset of the sequence in question.  Therefore it essentially suffices to define density for the Poincar\'e disk and the Poincar\'e punctured unit disk.  The case of the Poincar\'e disk has been well-studied, but to the author's knowledge the case of the Poincar\'e punctured disk has not been directly considered in interpolation problems until now.  

If $X$ has no punctures (and therefore at least one border curve), Theorem \ref{main} is due to Ortega Cerd\`a \cite{quim-rs}.  We shall discuss Ortega Cerd\`a's theorem in more detail in Subsection \ref{past-results}.  In our previous work \cite{sv1} with A. Schuster, we allowed punctures, but our results carried cumbersome hypotheses which implied, in particular, that our sequences did not accumulate at  the punctures.  Essentially, punctures were omitted in \cite{quim-rs}, and were restrictive in \cite{sv1}, because the Green's function was used to define density, and Green's functions do not see isolated boundary points, as the latter are irregular for the Dirichlet problem.  To some extent, the present article and its predecessor \cite{v-rs1} emerged from a desire to study interpolation along sequences that could accumulate on the punctures.

\begin{s-rmk}
The special case where $X$ has at least one puncture but no border curves can almost be derived, with some work, from the main result of \cite{v-rs1}.  The proof there is very similar to the proof here, but in the present article, we can weaken somewhat the requirements on our weight functions, because the hyperbolic geometry of the punctures lets us make use of a technique introduced by Donelly and Fefferman \cite{df}, and further developed by Ohsawa in a number of articles ( See also \cite{bo-otdf}).  The technique of Donnelly-Fefferman-Ohsawa will be presented, in its most general form, in Subsection \ref{dfo-section}.
\red
\end{s-rmk}

There are two important special cases of Theorem 1, namely when $X = \D$ is the unit disk, and $X= \D^*$ is the punctured disk.  The case of the unit disk, in which condition $(\star)$ of Theorem \ref{main} is vacuous, was treated by a number of authors.  The first results are found in the work of K. Seip in the unweighted (and also the standardly weighted) Bergman disk \cite{s2}.  Berndtsson and Ortega Cerd\'a \cite{quimbo} were the first to treat the weighted case, for which they proved sufficiency.  (Berndtsson and Ortega Cerd\`a did not give an explicit definition of asymptotic density in their paper, but it is effectively  defined there.)  To the author's knowledge, necessity seems never to have been completely written down in the case of the unit disk with general weights, though in the work \cite{quimseep} of Ortega Cerd\'a and Seip there is an essentially complete sketch of how to do it.  We have therefore decided to provide complete details here, where we mostly follow the ideas of Ortega Cerd\'a and Seip, with only minor modifications that suit our own taste; we consider this part of the work to be essentially known.

To the author's knowledge, the case of the punctured disk has never been considered before the present article.  In fact, the punctured disk case contains nearly all the ideas needed to handle the general case.  Roughly speaking, a closed discrete subset of $\D^*$ can be written as a union of two sequences, the first of which only accumulates at the outer boundary, or border, of $\D^*$, and the second of which accumulates only at the puncture.  This decomposition is not unique, but the notions of uniform separation and of upper density are both independent of the decomposition.  

Near the border, a sequence in $\D^*$ looks very much like a sequence in $\D$, so its upper density can be defined, after a small amount of care, as though the sequence is indeed a sequence in $\D$.  However, near the puncture, the geometry one must consider is determined by our definition of the $\ell ^2$-spaces of functions on the sequence.  The rather natural definition we have chosen provides a geometry near the puncture that is very much like the cylindrical geometry considered in \cite{v-rs1}.  We import the definition of density near the puncture from the cylindrical case, though because cylindrical geometry is essentially flat and our spaces are negatively curved, formulating the definition of density for sequences near the puncture requires more care than was needed near the border.  Finally, the density of the sequence $\Gamma$ is defined as the maximum of the density near the puncture and the density near the border.  As we mentioned earlier, the density of a sequence is unchanged if we throw away finitely many points of the sequence, and this is the reason why the upper density is independent of the decomposition of the sequence into border-supported and puncture-supported subsequences.

Defining the density in the general case is now more clear.  The sequence $\Gamma$ is decomposed into a finite part, and a union of "tails".  Each tail, i.e., subsequence which accumulates near at most one boundary component, has an upper density, and this upper density is like the upper density in the disk if the boundary component is $1$-dimensional, and like the upper density in the cylinder if the boundary component is $0$-dimensional.  The upper density is then the maximum of the finite number of upper densities thus obtained.  

To establish the sufficiency of the conditions of Theorem \ref{main} for interpolation, we actually prove a stronger result, which we call strong sufficiency.  The result we obtain is stronger in the sense that we do not require the weight functions to be smooth, or to have Laplacian that is bounded from above.  We do require the strictly positive lower bound near the boundary, which was not always the case in \cite{v-rs1}.  The reason, vaguely speaking, is that there is at present no sharp $L^2$ extension theorem in the setting of manifolds that admit non-trivial functions with self-bounded gradient, i.e., in which one can apply the technique of Donnelly-Fefferman-Ohsawa, discussed in Subsection \ref{dfo-section}.  We hope to return to the sharp $L^2$ extension problem on another occasion.

The article is organized as follows.  

In Section \ref{background-section} we recall some background and establish notation that will be followed in the rest of the article.  In particular, we discuss metrics of constant negative curvature, and then recall the $L^2$ extension theorem, the Donnelly-Fefferman-Ohsawa Technique, some results on weights with bounded Laplacian in the unit disk, and the Poisson-Jensen Formula.  

In Section \ref{disk-section} we state and prove Theorem \ref{main} in the case of the unit disk.  As previously mentioned, this is our own take on what is essentially work of Berndtsson, Ortega Cerd\`a and Seip.  But perhaps most importantly, we precisely formulate the asymptotic upper density for the unit disk.  

In Section \ref{pdisk-section}, which is the longest section of the article, we state and prove Theorem \ref{main} in the case of the punctured disk.  In this section, we develop the most important parts of the article: the cylindrical geometry of the puncture, the decomposition of sequences into those supported near the border and near the puncture, and all the related technical machinery that is needed to treat the two types of sequences, and to glue together data obtained from these subsequences into data for the entire sequence.

In Section \ref{main-proof}, we begin by recalling some geometry of the ends of a finite Riemann surface with punctures.  We then have all the tools we need to complete the proof of Theorem \ref{main}, but before doing so we discuss the special case proved by Ortega Cerd\`a, where $X$ has no punctures.  We then turn our attention to the proof of Theorem \ref{main}.  First, we define the asymptotic upper density.  Then we establish necessity.  Finally we prove a strong sufficiency theorem as in the cases of the disk and the punctured disk, and show how it implies the weaker form of sufficiency required for the completion of the proof of Theorem \ref{main}.

The article ends with a short section that remarks on the equivalence of our interpolation problem with the Shapiro-Shields interpolation problem.

\begin{ack}
I am grateful to Henri Guenancia, Long Li, Jeff McNeal, Quim Ortega Cerd\`a and Alex Schuster for many stimulating conversations both past and present, and without which this work would not have come to be.  I am also grateful to the anonymous referee for very useful and interesting remarks.
\red
\end{ack}

\section{Background}\label{background-section}

Let $X$ be a Riemann surface.  We write  $d^c = \frac{\ii}{2} (\dbar - \di)$, and denote by
\[
\Delta := dd^c = \ii \di \dbar 
\]
the Laplace operator (so normalized).  Note that our Laplacian sends functions to $(1,1)$-forms (or currents, if the functions are only locally integrable). We denote by $\phi _z$ the disk involution sending $0$ to $z$: 
\[
\phi _z (\zeta) := \frac{z-\zeta}{1-\bar z \zeta}.
\]
The function 
\[
(z,w) \mapsto |\phi _z(w)| = |\phi _w(z)|
\]
is called the pseudohyperbolic distance between $z$ and $w$ in $\D$.  We denote by 
\[
D_r(z) := \{ \zeta \in \D\ ;\ |\phi _z(\zeta)| < r\}
\]
the pseudohyperbolic disk of radius $r$ and center $z$.  

\subsection{Complete metrics of constant negative curvature}

Recall that if $\omega$ is a smooth conformal metric, expressed in local coordinates $z$ as $\omega = e^{-\psi(z)} \frac{\ii}{2} dz \wedge d\bar z$, then the curvature ${\rm R}(\omega)$ of $\omega$ is defined as the (global) $(1,1)$-form  
\[
{\rm R}(\omega) = \di \dbar \psi.
\]
We say the curvature is constant (resp. positive, negative) if the (globally defined) function 
\[
\frac{\ii {\rm R}(\omega)}{\omega}  = 2 e^{\psi} \frac{\di ^2 \psi}{\di z \di \bar z}:X \to \R
\]
(sometimes also called the curvature, or Gaussian curvature) is constant (resp. positive, negative).  It is well-known that every Riemann surface admits a complete conformal metric of constant curvature.  This curvature is positive if and only if $X= \p _1$, $0$ if and only if $X$ is covered by the complex plane, and negative if and only if $X$ is covered by the disk.  Thus no open Riemann surface has a complete metric of constant positive curvature, and an open Riemann surface $X$ has a complete flat metric if and only if $X = \C$ or $X=\C^*$.  Up to homothety, in $\C$ there is a unique flat metric.  In $\C^*$ the complete flat metric is unique up to a constant multiple.  In the hyperbolic case, things are even better.  If $X$ is covered by the unit disk, then $X$ has a unique metric of constant curvature $-4$, as we now recall.  

\subsubsection{Existence and uniqueness of the hyperbolic metric}

On the unit disk, one has the Poincar\'e metric 
\[
\omega _P := \frac{\ii dz \wedge d\bar z}{2(1-|z|^2)^2} = \frac{\ii}{2} \di \dbar \log \frac{1}{1-|z|^2},
\]
which is complete and has constant negative curvature equal to $-4$.  The Poincar\'e metric has the additional feature that ${\rm Aut}(\D) \subset {\rm Isom}(\omega _P)$.  It follows that if $X$ is a Riemann surface with covering map $\pi :\D \to X$, then the group of deck transformations $G_{\pi} \subset {\rm Aut}(\D)$ consists of isometries of $\omega _P$, and thus we can push $\omega _P$ forward to $X$ by $\pi$, obtaining a metric that we continue to denote by $\omega _P$, and also call the Poincar\'e metric.  We note that the metric $\omega _P$ is complete on $X$.  Explicitly, 
\[
\omega _P(d\pi (z) \xi, \overline {d\pi(z) \xi}) = \frac{|dz(\xi)|^2}{2(1-|z|^2)^2}.
\]
\begin{rmk}
Sometimes the Riemann surface $X$ may itself be an open subset of another Riemann surface $Y$ that is covered by the unit disk.  In this case, there may be unnecessary confusion in the notation $\omega _P$.  Thus, when we need to specify the surface as well, we may write $\omega _P^X$ for the Poincar\'e metric of $X$.
\red
\end{rmk}
  
Finally, $\omega _P$ is the only metric of constant curvature $-4$.  Indeed, if $\omega_1$ and $\omega _2$ are two complete metrics of constant negative curvature $-c$ on $X$, we may lift them to the unit disk via $\pi$, and if they are equal on $\D$, then they are equal on $X$.  Thus we might as well assume $X= \D$.  Write 
\[
\omega _i = e^{u_i} \frac{\ii}{2} dz \wedge d\bar z, \quad i=1,2.
\]
The remainder of the proof is due to Ahlfors \cite{ahlfors-schwarz}.  We want to show that $u_1=u_2$, and by symmetry it suffices to show that $u_1 \le u_2$.  To establish the latter, let $f _r : D_r(0) \to  \D ; z\mapsto z/r$ and write $v = f_r ^* u_2 + \log r^2$.  Observe that the metric $e^v \frac{\ii}{2} dz \wedge d\bar z = f_r ^* \omega _2$ has constant negative curvature $-c$ on $D_r(0)$, and it is also complete there.  On the other hand, $\omega _1$, while having curvature $-c$ on $D_r(0)$, is of course not complete there.

Let $E \subset D_r(0)$ be the open set of all points where $u_1 > v$.  Set $h := u_1 - v$.  We have 
\[
\Delta h =  c (e^{u_1} - e^{v}) \frac{\ii}{2} dz \wedge d\bar z.
\]
It follows that $h$ is subharmonic on $E$, and therefore cannot take its maximum in any interior point of $E$ in $D_r(0)$.  It must thus assume its maximum on the boundary of $E$.  But at a boundary point of $E$ that lies in $D_r(0)$, we must have $u_1 = v$ by continuity, so the maximum is not achieved in the interior of $D_r(0)$.  On the other hand, on the circle $\di D_r(0)$, $h = -\infty$ because the metric $f_r ^* \omega _2$ is complete in $D_r(0)$ while the metric $\omega _1$ is not.  It follows that $E$ is empty, and therefore $u_1 \le v$.  Letting $r \to 1$, we see that $u_1 \le u_2$, as desired.

\subsection{Hyperbolic and pseudohyperbolic distance of $\D$}

Recall that the $\omega_P$-geodesic distance between two points $z, w\in \D$ is 
\[
{\rm dist}_P(z,w) = \frac{1}{2} \log \frac{1+|\phi_z(w)|}{1-|\phi _z(w)|},
\]
Indeed, since ${\rm Aut}(\D) \subset {\rm Isom}(\omega _P)$, for an appropriate $\theta \in \R$ we have 
\[
{\rm dist}_P (z,w) := {\rm dist}_P(e^{\ii \theta}\phi _z(z),e^{\ii \theta}\phi _z(w)) ={\rm dist}(0, |\phi _z(w)|).
\]
And since the geodesics emanating from the origin are rays,  
\[
{\rm dist}(0,r) = \int _0 ^r \frac{dt}{1-t^2} = \frac{1}{2} \log \frac{1+r}{1-r}.
\]
It follows that the so-called pseudohyperbolic distance $\rho (z,w) := |\phi _z(w)|$ satisfies 
\[
|\phi _z(w)| = \frac{e^{2{\rm dist}_P(z,w)}-1}{e^{2{\rm dist}_P(z,w)}+1} = \tanh \left ({\rm dist}_P(z,w)\right ).
\]
In particular, the hyperbolic distance is monotonically increasing in the pseudohyperbolic distance, and the ratio of the two distances converges to $1$ as the pair of points comes together.  

\subsection{The $L^2$ extension theorem}

Since the work of Ohsawa and Takegoshi \cite{ot-first}, there have been many statements and proofs (as well as applications) of theorems on $L^2$ extension of holomorphic functions and sections of holomorphic line bundles.  We will make use of the following version, proved by the author in \cite{v-tak}.

\begin{d-thm}\label{ot-basic}
Let $(X,\omega)$ be a Stein K\"ahler manifold, and let $Z \subset X$ be a smooth hypersurface.  Assume there exists a section $T \in H^0(X,L_Z)$ and a metric $e^{-\lambda}$ for the line bundle $L_Z \to X$ associated to the smooth divisor $Z$, such that $e^{-\lambda}|_Z$ is still a singular Hermitian metric, and 
\[
\sup _X |T|^2e^{-\lambda} \le 1.
\]
Let $H \to X$ be a holomorphic line bundle with singular Hermitian metric $e^{-\psi}$ such that $e^{-\psi}|_Z$ is still a singular Hermitian metric.  Assume that 
\[ 
\ii (\di \dbar \psi  +{\rm Ricci}(\omega)) \ge \ii \di \dbar \lambda _Z
\]
and
\[
\ii (\di \dbar \psi +{\rm Ricci}(\omega)) \ge(1+ \delta) \ii \di \dbar \lambda _Z
\]
for some positive constant $\delta \le 1$.  Then for any section $f \in H^0(Z,H)$ satisfying 
\[
\int _Z \frac{|f|^2e^{-\psi}}{|dT|_{\omega}^2e^{-\lambda }}dA_{\omega} <+\infty 
\]
there exists a section $F\in H^0(X,H)$ such that 
\[
F|_Z=f \quad \text{and} \quad \int _X |F|^2e^{-\psi} dV_{\omega} \le \frac{24\pi}{\delta}\int _Z \frac{|f|^2e^{-\psi}}{|dT|_{\omega}^2e^{-\lambda }}dA_{\omega}.
\]
\end{d-thm}

\subsection{The theorem of Donnelly-Fefferman-Ohsawa}\label{dfo-section}

Let $(X,\omega)$ be a K\"ahler manifold, $L \to X$ a holomorphic line bundle with singular Hermitian metric $e^{-\vp}$, and $\Omega \relcomp X$ a pseudoconvex domain with smooth boundary.  Suppose there are positive functions $\tau$ and $A$ on $\overline{\Omega}$ with $\tau$ $\sC ^2$-smooth.  One has the following well-known identity.  (See, for example, \cite{v-tak}.)

\begin{d-thm}[Twisted basic estimate]\label{tbk}
For any $L$-valued $(0,1)$-form $\beta$ in the domain of $\dbar ^* _{\psi}$ on $\overline{\Omega}$ one has the estimate 
\begin{eqnarray*}
&& \int _{\Omega} (\tau+A) |\dbar ^*_{\psi}\beta|^2e^{-\psi} dV_{\omega} + \int _{\Omega} \tau |\dbar ^*_{\psi}\beta|^2e^{-\psi} dV_{\omega} \\
&& \qquad \ge \int _{\Omega} \left < \left \{ \tau(\di \dbar \psi + {\rm Ricci}(\omega))- \di \dbar \tau - \frac{\di \tau \wedge \dbar \tau}{A} \right \}\beta , \beta \right > e^{-\psi} dV_{\omega}.
\end{eqnarray*}
\end{d-thm}
The twisted basic estimate is obtained from the Bochner-Kodaira Identity 
\begin{eqnarray*}
\int _{\Omega} |\dbar ^*_{\vp}\beta|^2e^{-\vp} dV_{\omega} + \int _{\Omega} |\dbar \beta|^2e^{-\vp} dV_{\omega} &=& \int _{\Omega}  \left < \left \{\di \dbar \vp + {\rm Ricci}(\omega)\right \}\beta , \beta \right > e^{-\vp} dV_{\omega}\\
&& + \int _{\Omega} |\bar \nabla \beta |^2e^{-\vp} dV_{\omega} + \int _{\di \Omega} \left < \{ \di \dbar \rho \} \beta , \beta \right > e^{-\vp}\frac{dS}{|\di \rho|^2}
\end{eqnarray*}
by substituting $e^{-\vp} = \tau e^{-\psi}$, using the non-negativity of the last two terms, and applying the Cauchy-Schwarz Inequality.  Of course, the Bochner-Kodaira Identity only makes sense for continuous forms, and it is proved for smooth forms in the domain of $\dbar ^*_{\vp}$.  But since the latter are dense in the graph norm, after we use the pseudoconvexity of $\Omega$, it suffices to prove the twisted basic estimate for smooth forms.

If we take $\tau = e^{-\eta}$ and $A = \frac{\tau}{\nu}$ for some smooth function $\eta$ and positive constant $\nu$, then we have the following estimate:  for all $L$-valued $(0,1)$-forms $\beta$ in the domain of $\dbar ^*_{\psi}$, 
\begin{eqnarray}
\nonumber && \frac{1+\nu}{\nu} \int _{\Omega} e^{-\eta} |\dbar ^* _{\psi} \beta|^2e^{-\psi} dV_{\omega} + \int _{\Omega} e^{-\eta} |\dbar \beta|^2e^{-\psi} dV_{\omega} \\
\label{eta-best}&& \qquad \ge \int _{\Omega} e^{-\eta}\left < \left \{ \di \dbar \psi + {\rm Ricci}(\omega)) + \di \dbar \eta - (1+\nu) \di \eta \wedge \dbar \eta \right \}\beta , \beta \right > e^{-\psi} dV_{\omega}.
\end{eqnarray}
With the estimate \eqref{eta-best}, we can now prove the following theorem, due to Ohsawa, which is an analogue for $\dbar$ of a theorem proved by Donelly-Fefferman for the exterior derivative $d$.

\begin{d-thm}[Donnelly-Fefferman, Ohsawa]\label{odf-thm}
Let $(X,\omega)$ be a Stein K\"ahler manifold, $L\to X$ a holomorphic line bundle with singular Hermitian metric $e^{-\psi}$, $\eta \in W^{1,2}_{\ell oc}(X)$ a function, $\nu$ a positive number, and $\Theta$ a non-negative, almost everywhere positive $(1,1)$-form such that
\begin{equation}\label{sub-curv-hyp}
\ii (\di \dbar \psi + {\rm Ricci}(\omega) + \di \dbar \eta - (1+\nu) \di \eta \wedge \dbar \eta )\ge \Theta.
\end{equation}
Then for any $L$-valued $\dbar$-closed $(0,1)$-form $\alpha$ satisfying 
\[
\int _{X} e^{\eta} |\alpha|^2_{\Theta}e^{-\psi} dV_{\omega} < +\infty
\]
there exists a measurable section $u$ of $L$ such that 
\[
\dbar u = \alpha \quad \text{and} \quad \int _X e^{\eta} |u|^2e^{-\psi} dV_{\omega} \le \frac{\nu +1}{\nu} \int _{X} e^{\eta} |\alpha|^2_{\Theta}e^{-\psi} dV_{\omega}.
\]
\end{d-thm}

\begin{proof}
By standard approximation methods, we can replace $X$ by a smoothly bounded pseudoconvex domain $\Omega \relcomp X$, and assume that $e^{-\psi}$ and $\eta$ are smooth functions.  With these reductions, consider the linear functional 
\[
\sL (\dbar ^*_{\psi} \beta) := \int _{\Omega} \left < \beta ,\alpha \right > e^{-\psi} dV_{\omega}
\]
defined on the subspace
\[
{\rm Image}(\dbar ^*_{\psi}) := \{ \dbar ^*_{\psi} \beta\ ;\ \beta \in {\rm Domain}(\dbar ^*_{\psi}) \}.
\]
Since $\alpha \in {\rm Kernel}(\dbar)$, it is orthogonal to the image of $\dbar ^*_{\psi}$ (the formal adjoint of $\dbar$ acting on $L$-valued $(0,2)$-forms), and thus it suffices to restrict $\sL$ to $\dbar ^*_{\psi} \beta$ for $\beta \in {\rm Kernel}(\dbar)$.  But for such $\beta$, we have 
\begin{eqnarray*}
\sL (\dbar ^*_{\psi}\beta) &\le & \left ( \int _{\Omega} e^{\eta} |\alpha|^2_{\Theta} e^{-\psi} dV_{\omega} \right ) \int _{\Omega} e^{-\eta} \left < \{ \Theta \} \beta , \beta\right > e^{-\psi} dV_{\omega} \\
& \le & \frac{\nu+1}{\nu}  \left ( \int _{\Omega} e^{\eta} |\alpha|^2_{\Theta} e^{-\psi} dV_{\omega} \right ) \int _{\Omega} e^{-\eta}|\dbar ^* _{\psi} \beta|^2 e^{-\psi} dV_{\omega}.
\end{eqnarray*}
Therefore $\sL$ is continuous.  Extending by $0$ in ${\rm Image}(\dbar ^*_{\psi})^{\perp}$ and using the Riesz Representation Theorem, we find a section $U$ of $L$ such that 
\[
\int _{\Omega} e^{- \eta} (\dbar ^*_{\psi}\beta ) \bar U e^{-\psi} dV_{\omega} = \int _{\Omega} \left < \beta ,\alpha \right > e^{-\psi} dV_{\omega} \quad \text{and} \quad \int _{\Omega} e^{-\eta} |U|^2e^{-\psi} dV_{\omega} \le \frac{\nu +1}{\nu} \int _{\Omega} e^{\eta}|\alpha|^2_{\Theta}e^{-\psi} dV_{\omega}.
\]
The first of these says that $\dbar (e^{-\eta}U) = \alpha$.  If we let $u = e^{-\eta}U$ then we have 
\[
\dbar u = \alpha \quad \text{and} \quad \int _{\Omega} e^{\eta} |u|^2e^{-\psi} dV_{\omega} \le \frac{\nu +1}{\nu} \int _{\Omega} e^{\eta}|\alpha|^2_{\Theta}e^{-\psi} dV_{\omega}.
\]
This completes the proof.
\end{proof}

In the case of the Poincar\'e unit disk, we can take $\eta = \log \frac{1}{1-|z|^2}$ to obtain the following result, which is stated (in a slightly different but equivalent form) and proved in \cite{quimbo}, where it is attributed to Ohsawa.  

\begin{d-thm}\label{disk-odf-thm}
Let $\vp : \D \to [-\infty, \infty)$ be upper semi-continuous and satisfy 
\[
\ii \di \dbar \vp \ge (2(1+\nu) + c) \omega _P
\]
for some positive numbers $\nu$ and $c$.  Then for any $(0,1)$-form $\alpha$ on $\D$ satisfying 
\[
\int _{\D} |\alpha|^2_{\omega _P}e^{-\vp} \omega _P < +\infty
\]
there exists a locally integrable function $u$ such that 
\[
\dbar u = \alpha \quad \text{and} \quad \int _{\D} |u|^2e^{-\vp}\omega _P \le \frac{\nu +1}{c \nu} \int _{\D} |\alpha|^2_{\omega _P}e^{-\vp} \omega _P
\]
\end{d-thm}

In the case of the Poincar\'e punctured disk $(\D ^*,\omega _P)$, we obtain the following result.

\begin{d-thm}\label{punctured-disk-odf-thm}
Let $\vp : \D ^* \to [-\infty, \infty)$ be upper semi-continuous and satisfy 
\[
\ii \di \dbar \vp \ge (2(1+\nu) + c) \omega _P
\]
for some positive numbers $\nu$ and $c$.  Then for any $(0,1)$-form $\alpha$ on $\D ^*$ satisfying 
\[
\int _{\D^*} |\alpha|^2_{\omega _P}e^{-\vp} \omega _P < +\infty
\]
there exists a locally integrable function $u$ such that 
\[
\dbar u = \alpha \quad \text{and} \quad \int _{\D^*} |u|^2e^{-\vp}\omega _P \le \frac{\nu +1}{c \nu} \int _{\D^*} |\alpha|^2_{\omega _P}e^{-\vp} \omega _P
\]
\end{d-thm}

\begin{proof}
In Theorem \ref{odf-thm} let $X = \D^*$, $L = \co$, $\eta = - \log \log |z|^{-2}$ and $\psi = \vp + \eta$.  Then 
\[
2 \omega_P =  \ii \di \dbar \eta = \ii \di \eta \wedge \dbar \eta,
\]
 and thus 
 \[
 \di \dbar \psi + {\rm Ricci}(\omega _P) + \di \dbar \eta - (1+\nu) \di \eta \wedge \dbar \eta = \di \dbar \vp - 2(1+\nu) \omega _P \ge c\omega _P.
\]
Letting $\Theta:= c\omega _P$ completes the proof.
\end{proof}

\begin{s-rmk}
Note that H\"ormander's Theorem implies these results if $c \ge 2$, but not otherwise.
\red
\end{s-rmk}

In Section \ref{main-proof} we will extend extend theorems \ref{disk-odf-thm} and \ref{punctured-disk-odf-thm} to general finite open Riemann surfaces covered by the unit disk (cf. Theorem \ref{odf-gen}).

\subsection{Weights with bounded Laplacian in $(\D, \omega _P)$}\label{bdd-lap}

We recall some well-known material that will be used in the proof of Theorem \ref{main}.  

We begin with a result on a solution of Poisson's Equation with locally uniform estimates.  A proof can be found in \cite{sv2}.

\begin{lem}\label{ddbar-est}
For each $0 < r <1$ there exists a constant $C=C_{r}>0$ with the following property.  For any  $(1,1)$-form $\theta \in \sC ^2(\C)$ satisfying 
\[
- M \omega _P \le \theta \le M \omega _P,
\]
there exists $u \in \sC ^2 (D_r(0))$ such that 
\[
\Delta u = \theta \quad \text{and} \quad \sup _{D_r(0)} (|u|+|du|_{\omega _P}) \le C M.
\]
\end{lem}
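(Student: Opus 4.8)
The plan is to reduce the statement on $\D$ (with its Poincar\'e metric) to a fixed interior problem and invoke elliptic interior estimates uniformly. First I would observe that the Poincar\'e metric $\omega_P$ and the Euclidean metric $\frac{\ii}{2}dz\wedge d\bar z$ are comparable on any fixed relatively compact disk $D_r(0)\relcomp\D$ (say with comparison constants depending only on $r$ through the factor $(1-|z|^2)^{-2}$, which on $D_{r'}(0)$ for $r<r'<1$ lies between $1$ and $(1-r'^2)^{-2}$). Hence the hypothesis $-M\omega_P\le\theta\le M\omega_P$ on $\C$ forces, in particular, a bound $|\theta(z)|_{\rm eucl}\le C_{r'}M$ pointwise on $\overline{D_{r'}(0)}$; writing $\theta=g(z)\,\frac{\ii}{2}dz\wedge d\bar z$ we get $\sup_{D_{r'}(0)}|g|\le C_{r'}M$, and more importantly the $\sC^2$ hypothesis on $\theta$ is not actually needed for the conclusion we want in this degree of smoothness — but we keep it since it is given.

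Next I would solve the Poisson equation explicitly on the slightly larger disk. Pick $r<r'<1$ and set
\[
u(z):=\frac{1}{2\pi}\int_{D_{r'}(0)}\log\frac{|z-\zeta|}{r'}\,g(\zeta)\,\frac{\ii}{2}d\zeta\wedge d\bar\zeta,
\]
the Green potential of $g$ on $D_{r'}(0)$, so that $\Delta u=\theta$ there (in our normalization $\Delta=\ii\di\dbar$, the factor is arranged to match). Because $\sup_{D_{r'}(0)}|g|\le C_{r'}M$ and the Green kernel of $D_{r'}(0)$ is integrable with a bound depending only on $r'$, one immediately gets $\sup_{D_{r'}(0)}|u|\le C_{r'}'M$. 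For the gradient, differentiating under the integral sign gives
\[
du(z)=\frac{1}{2\pi}\int_{D_{r'}(0)}\frac{\partial}{\partial z}\log|z-\zeta|\cdot g(\zeta)\,\frac{\ii}{2}d\zeta\wedge d\bar\zeta\ \cdot dz + (\text{conjugate}),
\]
and the kernel $|z-\zeta|^{-1}$ is still locally integrable in two real dimensions with a bound depending only on $r'$, so $\sup_{D_{r'}(0)}|du|_{\rm eucl}\le C_{r'}''M$; comparing $|du|_{\rm eucl}$ with $|du|_{\omega_P}$ on the smaller disk $D_r(0)$ costs only another constant $C_r$. Collecting these, $\sup_{D_r(0)}(|u|+|du|_{\omega_P})\le CM$ with $C=C_r$, which is exactly the claim.

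I do not expect a serious obstacle here; the result is genuinely elementary and the reference $\cite{sv2}$ is cited precisely because the proof is routine. The only mildly delicate point is bookkeeping the normalization of $\Delta=dd^c=\ii\di\dbar$ against the classical Laplacian so that the Green potential produces exactly $\theta$ and not a constant multiple of it — one absorbs any such universal factor into $C$. A second point worth a sentence is that one must work on $D_{r'}(0)$ with $r'>r$ rather than on $D_r(0)$ itself, so that the gradient estimate is an interior estimate away from the boundary; the comparison $\omega_P\asymp_{r'}\frac{\ii}{2}dz\wedge d\bar z$ is then applied on the slightly larger disk for the hypothesis and on the smaller disk for the conclusion. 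Everything else is Cauchy–Schwarz and the local integrability of $\log|z-\zeta|$ and $|z-\zeta|^{-1}$.
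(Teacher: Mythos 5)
Your argument is correct, and it is the natural elementary approach: restrict to a slightly larger Euclidean disk where $\omega_P$ is uniformly comparable to the Euclidean metric, form the Newtonian potential, and use local integrability of $\log|z-\zeta|$ and $|z-\zeta|^{-1}$ to bound $u$ and $du$. Note that the paper itself does not give a proof of Lemma \ref{ddbar-est} --- it refers to \cite{sv2} --- so there is no in-paper argument to compare against; the reference's proof is of the same flavor as yours.

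Two small remarks on precision, neither of which is a gap. First, you call your kernel ``the Green potential of $g$ on $D_{r'}(0)$,'' but $\log\frac{|z-\zeta|}{r'}$ is the (normalized) Newtonian kernel, not the Green kernel of the disk; this is harmless because you only need $\Delta u=\theta$, not a prescribed boundary condition. Second, your aside that ``the $\sC^2$ hypothesis on $\theta$ is not actually needed'' is slightly overstated: to conclude $u\in\sC^2(D_r(0))$ you do need some regularity of $\theta$ beyond continuity (e.g.\ H\"older, by Schauder or by the explicit potential estimates); the $\sC^2$ hypothesis comfortably suffices, and that is the point the lemma is making. Everything else --- the pointwise comparison $|g|\le (1-r'^2)^{-2}M$ on $D_{r'}(0)$ from $-M\omega_P\le\theta\le M\omega_P$, differentiation under the integral sign, absorbing the factor from the normalization $\Delta=\ii\partial\bar\partial$ into $C_r$, and the inequality $|du|_{\omega_P}\le |du|_{\rm eucl}$ on $\D$ --- is handled correctly.
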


As a corollary, we have following result, established in \cite{quimbo} (see also \cite{sv2}).

\begin{lem}\label{weight-centering}
Let $\vp \in \sC ^2(\D)$ satisfy 
\[
-M\omega _P \le \Delta \vp \le M \omega _P
\]
for some positive constant $M$.  Then for each $r \in (0,1)$ there is a positive constant $C_r$ such that for any $z \in \D$ there is a holomorphic function $F$ satisfying 
\[
F(z) = 0, \quad \quad |2\re F (\zeta) - \vp(\zeta) + \vp (z)| \le C_r, \quad \text{and} \quad |2 \re dF(\zeta) - d\vp(\zeta)|_{\omega _P} \le C_r 
\]
for all $\zeta \in D_r(z)$.
\end{lem}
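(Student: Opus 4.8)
The plan is to trivialize the hypothesis at an arbitrary point $z\in\D$ using the transitive isometric action of $\mathrm{Aut}(\D)$ on $(\D,\omega_P)$, to produce the required holomorphic function at the origin via Lemma \ref{ddbar-est}, and then to transport it back by $\phi_z$; because $\phi_z$ is an $\omega_P$-isometry, all estimates are preserved under this transport, and the resulting constant depends only on $r$ and $M$.

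First I would fix $z\in\D$ and set $\tilde\vp:=\vp\circ\phi_z$. Since $\phi_z$ is a biholomorphism of $\D$ with $\phi_z\circ\phi_z=\mathrm{id}$ and $\phi_z^*\omega_P=\omega_P$, one has $\tilde\vp\in\sC^2(\D)$ and $\Delta\tilde\vp=\phi_z^*(\Delta\vp)$, so the bound $-M\omega_P\le\Delta\vp\le M\omega_P$ pulls back to $-M\omega_P\le\Delta\tilde\vp\le M\omega_P$ on $\D$. Next I would apply Lemma \ref{ddbar-est} with $\theta=\Delta\tilde\vp$ (restricted to $\D$, or, if one wants a little room, first passing to a disk $D_{r'}(0)$ with $r<r'<1$) to obtain $u\in\sC^2(D_r(0))$ with $\Delta u=\Delta\tilde\vp$ on $D_r(0)$ and $\sup_{D_r(0)}(|u|+|du|_{\omega_P})\le CM$, where $C=C_r$ depends only on $r$. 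Then $\tilde\vp-u$ is harmonic on the simply connected domain $D_r(0)$, hence $\tilde\vp-u=2\re\tilde G$ for some holomorphic $\tilde G$ on $D_r(0)$.

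Finally I would transport back: put $G:=\tilde G\circ\phi_z$, which is holomorphic on $D_r(z)=\phi_z(D_r(0))$, and $F:=G-G(z)$, so that $F(z)=0$. For $\zeta\in D_r(z)$, writing $w:=\phi_z(\zeta)\in D_r(0)$ and using $\phi_z(z)=0$, one computes $2\re F(\zeta)=2\re\tilde G(w)-2\re\tilde G(0)=(\tilde\vp(w)-u(w))-(\tilde\vp(0)-u(0))=(\vp(\zeta)-u(w))-(\vp(z)-u(0))$, so that $|2\re F(\zeta)-\vp(\zeta)+\vp(z)|=|u(0)-u(w)|\le 2CM$. Differentiating this identity gives $2\re dF(\zeta)-d\vp(\zeta)=-d(u\circ\phi_z)(\zeta)$, and since $d(\phi_z)_\zeta$ is a linear $\omega_P$-isometry onto $T_w\D$, $|d(u\circ\phi_z)(\zeta)|_{\omega_P}=|du(w)|_{\omega_P}\le CM$. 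This yields the claim with $C_r:=2CM$.

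The one point that needs care is the uniformity of the constant in $z$, and this is precisely what the conjugation by $\phi_z$ in the first step secures: after pulling back, Lemma \ref{ddbar-est} is invoked at the fixed point $0$ with a $z$-independent constant, and the isometry property of $\phi_z$ carries both the sup-bound on $u$ and the $\omega_P$-norm bound on $du$ back to $D_r(z)$ with no loss. I do not anticipate any genuine obstacle beyond this bookkeeping.
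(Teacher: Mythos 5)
Your proof is correct and follows the intended derivation: the paper presents Lemma \ref{weight-centering} precisely as a corollary of Lemma \ref{ddbar-est} (with the details deferred to the cited references), and your argument — pulling back by the $\omega_P$-isometry $\phi_z$ to center the problem at the origin, solving $\Delta u = \Delta\tilde\vp$ there via Lemma \ref{ddbar-est}, taking the holomorphic function whose real part is the harmonic function $\tilde\vp - u$ on the simply connected disk $D_r(0)$, and transporting back — is exactly the natural way that corollary is obtained, with the $z$-uniformity of $C_r$ coming from the isometric action of $\mathrm{Aut}(\D)$ as you point out.
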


Lemma \ref{weight-centering} gives the following  generalizations of Bergman's inequality.  (See \cite{quimseep} for a proof.)

\begin{prop}\label{bergman}
Let $\vp \in \sC ^2(\D)$ satisfy 
\[
-M \omega _P \le  \Delta \vp \le M\omega _P.
\]
Then for each $r \in (0,1)$ there exists $C_r=C_r(M)$ such that for all $f \in \sh ^2(\D, e^{-\vp} \omega _P)$, 
\begin{enumerate}
\item[(a)] 
\[
|f(z)|^2e^{-\vp(z)} \le C_r \int _{D_r(z)} |f|^2e^{-\vp} \omega _P,
\]
and 
\item[(b)]
\[
|d(|f|^2e^{-\vp})|_{\omega _P}(z) \le C_r \int _{D_r(z)} |f|^2e^{-\vp} \omega _P.
\]
\end{enumerate}
\end{prop}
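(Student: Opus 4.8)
The plan is to reduce both inequalities to the unweighted case---where they are elementary consequences of the sub-mean value property and Cauchy's estimate---by using Lemma \ref{weight-centering} to absorb $\vp$.

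First I would fix $z\in\D$ and apply Lemma \ref{weight-centering} with the given radius $r$ to obtain a holomorphic $F=F_z$ on $\D$ with $F(z)=0$,
$$|2\re F (\zeta)-\vp(\zeta)+\vp(z)|\le C_r\quad\text{and}\quad|2\re dF(\zeta)-d\vp(\zeta)|_{\omega _P}\le C_r\qquad(\zeta\in D_r(z)),$$
where $C_r=C_r(M)$. Setting $g:=fe^{-F}\in\co(\D)$ and $\Phi:=2\re F-\vp$, one has $|f|^2e^{-\vp}=|g|^2e^{\Phi}$ on $\D$, and the first centering bound says that $\Phi$ differs from the constant $-\vp(z)$ by at most $C_r$ on $D_r(z)$. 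Since $\Phi(z)=-\vp(z)$, this gives the two-sided comparison $e^{-C_r}|g|^2e^{-\vp(z)}\le|f|^2e^{-\vp}\le e^{C_r}|g|^2e^{-\vp(z)}$ throughout $D_r(z)$, and in particular $|f(z)|^2e^{-\vp(z)}=|g(z)|^2e^{-\vp(z)}$.

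It then suffices to show there is a constant $c_r$ depending only on $r$ with $|g(z)|^2\le c_r\int_{D_r(z)}|g|^2\omega _P$ and $\big|d|g|^2\big|_{\omega _P}(z)\le c_r\int_{D_r(z)}|g|^2\omega _P$ for every $g\in\co(\D)$. For this I would use that $D_r(z)=\phi _z(D_r(0))$ is a Euclidean disk which, by a direct computation with $\phi _z$, contains $B(z,a_r(1-|z|^2))$ for a suitable $a_r\in(0,1)$, and on which $\omega _P$ is comparable---with constants depending only on $r$---to $(1-|z|^2)^{-2}\tfrac{\ii}{2}dw\wedge d\bar w$. The sub-mean value inequality for the subharmonic function $|g|^2$ over this Euclidean ball yields the first estimate; a Cauchy estimate for $g'$ on a concentric ball of half the radius, together with $d|g|^2=\bar g\,dg+g\,d\bar g$ and the same comparison, yields the second. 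In both, the powers of $1-|z|^2$ cancel, so the constants depend only on $r$.

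Assembling the pieces, (a) is immediate from $|f(z)|^2e^{-\vp(z)}=|g(z)|^2e^{-\vp(z)}$ together with the comparison of $|g|^2e^{-\vp(z)}$ with $|f|^2e^{-\vp}$ on $D_r(z)$. For (b) I would differentiate $|f|^2e^{-\vp}=|g|^2e^{\Phi}$, evaluate at $z$, and control the extra term via $|d\Phi|_{\omega _P}=|2\re dF-d\vp|_{\omega _P}\le C_r$ on $D_r(z)$ and the two unweighted estimates for $g$. I do not expect a serious obstacle: the one delicate ingredient---uniform control of $\vp$ on an entire pseudohyperbolic disk from a bound on $\Delta\vp$---is precisely what Lemma \ref{weight-centering} (via Lemma \ref{ddbar-est}) supplies, and everything else is the geometry of $\phi _z$ and Cauchy's inequality.
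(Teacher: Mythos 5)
Your proposal is correct and follows exactly the route the paper indicates: the paper states Proposition \ref{bergman} as a consequence of Lemma \ref{weight-centering} and defers details to \cite{quimseep}, and your argument — centering the weight by the holomorphic $F$ of Lemma \ref{weight-centering}, passing to $g=fe^{-F}$, and running the sub-mean value and Cauchy estimates on the Euclidean ball $B(z,a_r(1-|z|^2))\subset D_r(z)$ where $\omega_P\asymp (1-|z|^2)^{-2}$ — is precisely the proof in that reference. The details you give are sound, including the cancellation of $1-|z|^2$ powers and the handling of the extra $d\Phi$ term in part (b) via the second centering bound and part (a).
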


\begin{cor}\label{Bergman-sums-disk}
Let $\vp$ be a weight function as in Proposition \ref{bergman}.  If $\Gamma$ is a finite union of uniformly pseudohyperbolically separated sequences then for each $r \in (0,1)$ there exists a constant $C_r=C_r(M)$ such that for all $f \in \sh ^2(\D, e^{-\vp} \omega _P)$, 
\begin{enumerate}
\item[(a)] 
\[
\sum _{\gamma\in \Gamma} |f(\gamma)|^2e^{-\vp(\gamma)} \le C_r \sum _{\gamma\in \Gamma} \int _{D_r(\gamma)} |f|^2e^{-\vp} \omega _P \le \tilde C_r \int _{\D} |f|^2e^{-\vp} \omega _P,
\]
and 
\item[(b)]
\[
\sum _{\gamma\in \Gamma} |d(|f|^2e^{-\vp})(\gamma)| \le C_r \sum _{\gamma\in \Gamma} \int _{D_r(\gamma)} |f|^2e^{-\vp} \omega _P \le \tilde C_r \int _{\D} |f|^2e^{-\vp} \omega _P.
\]
\end{enumerate}
\end{cor}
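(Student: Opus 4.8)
The plan is to derive Corollary \ref{Bergman-sums-disk} from Proposition \ref{bergman} by summing the pointwise estimates over $\Gamma$ and then controlling the sum of integrals over the disks $D_r(\gamma)$ by a fixed multiple of the total integral over $\D$. The first inequality in each of (a) and (b) is immediate: apply part (a) (resp.\ part (b)) of Proposition \ref{bergman} at each point $\gamma \in \Gamma$ and add. So the real content is the second inequality, namely the bound
\[
\sum _{\gamma\in \Gamma} \int _{D_r(\gamma)} |f|^2e^{-\vp} \omega _P \le \tilde C_r \int _{\D} |f|^2e^{-\vp} \omega _P,
\]
which amounts to saying that the disks $D_r(\gamma)$, $\gamma \in \Gamma$, have bounded overlap (a bounded-multiplicity covering property), with the multiplicity bound depending only on $r$ and on the separation constants of the finitely many uniformly separated subsequences comprising $\Gamma$.

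The key step is therefore the following geometric fact: if $\Gamma = \Gamma_1 \cup \dots \cup \Gamma_N$ where each $\Gamma_i$ is $\delta_i$-uniformly pseudohyperbolically separated, then for each $r \in (0,1)$ there is an integer $N_r$ such that no point of $\D$ lies in more than $N_r$ of the disks $\{D_r(\gamma)\}_{\gamma \in \Gamma}$. First I would treat a single uniformly separated sequence $\Gamma_i$: if $z \in D_r(\gamma) \cap D_r(\gamma')$ for $\gamma, \gamma' \in \Gamma_i$, then by the triangle inequality for the hyperbolic distance $\mathrm{dist}_P$ (which controls the pseudohyperbolic distance monotonically, as recalled in the excerpt), both $\gamma$ and $\gamma'$ lie in a hyperbolic ball of radius $R_r := \mathrm{dist}_P$-equivalent to $r$ centered at $z$; since the points of $\Gamma_i$ are $\delta_i$-separated in the pseudohyperbolic (hence hyperbolic) metric, a volume/packing argument in the hyperbolic plane—using that $\mathrm{Aut}(\D)$ acts by isometries of $\omega_P$, so the count is independent of the center—bounds the number of such $\gamma$ by a constant $N_{r,i}$ depending only on $r$ and $\delta_i$. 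Then $N_r := N_{r,1} + \dots + N_{r,N}$ works for $\Gamma$. Consequently
\[
\sum _{\gamma\in \Gamma} \int _{D_r(\gamma)} |f|^2e^{-\vp} \omega _P = \int _{\D} \Big( \sum_{\gamma \in \Gamma} \mathbf{1}_{D_r(\gamma)} \Big) |f|^2 e^{-\vp} \omega_P \le N_r \int_{\D} |f|^2 e^{-\vp}\omega_P,
\]
and taking $\tilde C_r = C_r N_r$ finishes both (a) and (b).

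The main obstacle, such as it is, is the packing estimate bounding $N_{r,i}$: one must make sure the bound is genuinely uniform in the center $z$, which follows from the homogeneity of the Poincaré metric under $\mathrm{Aut}(\D)$—translating $z$ to the origin by a disk automorphism $\phi_z$ and using that $\phi_z$ preserves both $\omega_P$ and pseudohyperbolic distances, so both the radius-$r$ disks and the separation condition are preserved. One then only needs the finiteness of the hyperbolic area of a fixed hyperbolic ball together with the fact that $\delta_i$-separated points carry disjoint (or boundedly overlapping) fixed-size disks inside a slightly larger ball, a standard comparison. Everything else—the additivity over the finitely many subsequences, and the passage from the pointwise bounds of Proposition \ref{bergman} to the summed bounds—is routine.
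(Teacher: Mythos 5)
Your proof is correct, and it is the standard argument the paper clearly has in mind (the corollary is stated without proof): the first inequality in each of (a) and (b) is just Proposition \ref{bergman} summed over $\Gamma$, and the second follows because $z \in D_r(\gamma)$ if and only if $\gamma \in D_r(z)$, so a packing/volume estimate in the homogeneous hyperbolic disk bounds the multiplicity of the covering $\{D_r(\gamma)\}_{\gamma\in\Gamma}$ by a constant $N_r$ depending only on $r$ and the separation constants of the finitely many uniformly separated subsequences. No gaps.
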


\subsection{Poisson-Jensen Formula}  

In the proof of necessity of Theorem \ref{disk-char}, we shall make use of the following weighted analogue of the well-known Poisson-Jensen Formula, which gives weighted counts of the number of zeros of a holomorphic function in a large pseudohyperbolic disk.  To formulate it, we denote the Green's function for the unit disk $\D$ with pole at $z$ by
\[
G_z (\zeta) := \log |\phi _z(\zeta)|,
\]
and the pseudohyperbolic disk of radius $r$ by 
\[
D_r(z) := \{ \zeta \in \D\ ;\ G(z,\zeta) < \log r \}.
\]

\begin{d-thm}[Poisson-Jensen Formula]\label{Jensen-hyp}
Fix a weight function $\psi \in \sC ^2 (\D)$.  Let $f \in \co (\D)$, let $z \in \D$, and let $r \in (0,1)$.  Let $a_1,...,a_N$ be the (possibly not distinct) zeros of $f$ in $D_r(z)$, and assume that $f(z) \neq 0$, and that there are no zeros of $f$ on the boundary of the pseudohyperbolic disk $D_r (z)$. Then 
\[
\frac{1}{\pi} \int _{\di D_r(z)} \log (|f|^2e^{-\psi} ) d^c G_z =  \log (|f(z)|^2e^{-\psi(z)}) + \sum _{j=1} ^N \log \frac{r^2}{|\phi _{a_j}(z)|^2} - \frac{1}{2\pi} \int _{D_r(z)} \log \frac{r^2}{|\phi _{z}|^2} \Delta \psi .
\]
\end{d-thm}

\begin{proof}
Recall that $d^c = \frac{\ii}{2}(\dbar - \di)$, so that $dd^c = \Delta$.  Let 
\[
K_z(\zeta) := G_z(\zeta)- \log r \quad \text{and} \quad H(\zeta) = \log \left ( \frac{|f(\zeta)|^2e^{-\vp(\zeta)}}{\prod _{j=1} ^N \frac{|\phi_{a_i}(\zeta)|^2}{r^2}} \right ).
\]
By Stokes' Theorem we have 
\begin{equation}\label{green-id-here}
\int _{\di D^o_r(z)} H d^c K_z - K_zd^c H = \int _{D^o_r(z)} H\Delta K_z - K_z \Delta H.
\end{equation}
Now, $d^cK_z=d^cG_z$, $K_z|_{\di D^o_r(z)}\equiv 0$ and $\Delta K_z = \pi \delta _z$.  It follows that 
\begin{eqnarray*}
\frac{1}{\pi } \int _{\di D^o_r(z)} \log |f|^2e^{-\vp} d^c G_z &=&  \log |f(z)|^2e^{-\vp(z)} +  \sum _{j=1} ^N   \left (  \log \frac{r^2}{|\phi _{a_i}(z)|^2} + 2 \int _{\di D^o_r(z)} K_{a_j} d^c G_z \right )\\
&&  - \frac{1}{\pi}\int _{D^o_r(z)} \log \frac{r}{|\phi_z(\zeta)|} \Delta \vp(\zeta).
\end{eqnarray*}
But since $K_z|_{\di D^o_r(z)} \equiv 0$, and application of \eqref{green-id-here} with $H= K_{a_j}$ gives
\[
\int _{\di D^o_r(z)} K_{a_j} d^c G_z =  \int _{D^o_r(z)} K_{a_j} \Delta K_z - K_z \Delta G_{a_j} = K_{a_j}(z) - K_z(a_j) = 0,
\]
and thus the result follows.
\end{proof}

\section{Interpolation in $(\D , \omega _P)$}\label{disk-section}

We write 
\[
\A_r := \{ \zeta \in \C\ ;\ 1/2< |\zeta| < r\}.
\]
In this section, we prove the following special case of Theorem \ref{main}.

\begin{d-thm}\label{disk-char}
Let $\vp \in \sC ^2 (\D)$ be a weight function satisfying 
\[
m \omega _P \le \Delta \vp -2 \omega _P\le M \omega _P
\]
for some positive constants $m$ and $M$, and let $\Gamma \subset \D$ be a closed discrete subset.  Then the restriction map $\sr _{\Gamma} : \sh ^2 (\D, e^{-\vp} \omega _P) \to \ell ^2 (\Gamma , e^{-\vp})$ is surjective if and only if 
\begin{enumerate}
\item[(i)] $\Gamma$ is uniformly separated with respect to the geodesic distance of $\omega _P$, and 
\item[(ii)] the upper density 
\[
D^+_{\vp} (\Gamma) := \limsup _{r \to 1} \sup _{z\in \D} \frac{2\pi \int _{\phi _z(\A_r)} \log \frac{r^2}{|\phi _z(\zeta)|^2}\delta _{\gamma}(\zeta)}{\int _{D_r(z)} \log \frac{r^2}{|\phi _z(\zeta)|^2}( \Delta \vp (\zeta) - 2\omega _P(\zeta))} < 1.
\]
\end{enumerate}
\end{d-thm}

\begin{rmk}
Since $D_r(z) = \phi _z(D_r(0))$ and ${\rm Aut}(\D) \subset {\rm Isom}(\omega_P)$, the functions $A_{\omega}$ are constant.
\red
\end{rmk}

It is useful to define the {\it pseudohyperbolic  separation radius} 
\[
R_{\Gamma} := \inf\left  \{ \frac{|\phi _{\gamma_1}(\gamma _2)|}{2}\ ;\ \gamma _1 , \gamma _2 \in \Gamma,\ \gamma _1 \neq \gamma _2\right \}
\]
of $\Gamma$, which is of course positive if and only if $\Gamma$ is uniformly separated in the pseudohyperbolic distance.

\subsection{Weights and density}

We begin with the following proposition.

\begin{prop}\label{mean-wt-hilb}
Let $\vp \in \sC^2(\D)$ be a weight function satisfying 
\[
-M \omega _P \le \Delta \vp \le M \omega _P
\]
for some positive constant $M$, and let 
\[
\vp _r (z) := \frac{1}{a_r} \int _{D_r(z)} \vp (\zeta) \log \frac{r^2}{|\phi _z(\zeta)|^2} \omega _P(\zeta) = \frac{1}{a_r} \int _{D_r(0)} \vp (\phi _z(\zeta)) \log \frac{r^2}{|\zeta|^2} \omega _P(\zeta),
\]
where 
\[
a_r := \int _{D_r(0)} \log \frac{r^2}{|\zeta|^2}\omega _P (\zeta) .
\]
Then 
\[
-M \omega _P \le \Delta \vp _r \le M \omega _P,
\]
and there is a constant $C_r>0$ such that for all $z \in \D$, 
\begin{equation}\label{ptwise-wt-comp}
|\vp (z)- \vp _r(z)| \le C_r.
\end{equation}
In particular, the identity map defines bounded linear isomorphisms
\[
\sh ^2(\D , e^{-\vp}\omega _P ) \asymp \sh ^2(\D , e^{-\vp_r}\omega _P ) \quad \text{and} \quad \ell ^2(\Gamma , e^{-\vp}) \asymp \ell ^2(\Gamma, e^{-\vp_r}).
\]
\end{prop}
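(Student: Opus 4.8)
The plan is to prove the three assertions in order: the Laplacian bound on $\vp_r$, the pointwise comparison \eqref{ptwise-wt-comp}, and then deduce the Hilbert space isomorphisms as a formal consequence. For the Laplacian bound, the key point is that the averaging operator $\vp \mapsto \vp_r$ commutes with $\Delta$, because the kernel $\log(r^2/|\phi_z(\zeta)|^2)$ against $\omega_P(\zeta)$, when viewed through the second expression $\vp_r(z) = a_r^{-1}\int_{D_r(0)} \vp(\phi_z(\zeta)) \log(r^2/|\zeta|^2)\omega_P(\zeta)$, realizes $\vp_r$ as an average of the $\mathrm{Aut}(\D)$-translates $\zeta \mapsto \vp(\phi_z(\zeta))$. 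Since $\phi_z$ is an isometry of $\omega_P$, one has $\Delta(\vp \circ \phi_z) = (\Delta\vp)\circ\phi_z$ as $(1,1)$-forms, and pushing $\Delta$ through the (fixed, $z$-independent) integral gives
\[
\Delta \vp_r(z) = \frac{1}{a_r}\int_{D_r(0)} (\Delta\vp)(\phi_z(\zeta)) \log\frac{r^2}{|\zeta|^2}\,\omega_P(\zeta).
\]
Feeding in $-M\omega_P \le \Delta\vp \le M\omega_P$ and using that $\phi_z^*\omega_P = \omega_P$ and that the kernel is nonnegative, the right side is squeezed between $\pm M\omega_P$, giving the claim. (One should note in passing that $a_r$ is finite — this is exactly the $\psi$-free case of the computation behind the Poisson-Jensen formula, Theorem \ref{Jensen-hyp}, or a direct estimate on $D_r(0)$.)

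For the pointwise bound \eqref{ptwise-wt-comp}, I would write $\vp(z) - \vp_r(z) = a_r^{-1}\int_{D_r(0)}(\vp(z) - \vp(\phi_z(\zeta)))\log(r^2/|\zeta|^2)\,\omega_P(\zeta)$, using $a_r^{-1}\int_{D_r(0)}\log(r^2/|\zeta|^2)\omega_P = 1$. Now I appeal to Lemma \ref{weight-centering}: there is a holomorphic $F$ (depending on $z$) with $F(z)=0$ and $|2\Re F(\zeta) - \vp(\zeta) + \vp(z)| \le C_r$ on $D_r(z)$. Substituting $\zeta \mapsto \phi_z(\zeta)$, we get $|\vp(z) - \vp(\phi_z(\zeta))| \le C_r + 2|\Re F(\phi_z(\zeta))|$ for $\zeta \in D_r(0)$. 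Since $F\circ\phi_z$ is holomorphic on $D_r(0)$, vanishes at $0$, and $\Re(F\circ\phi_z)$ is harmonic, the mean value property kills its contribution: $\int_{D_r(0)} \Re(F(\phi_z(\zeta)))\log(r^2/|\zeta|^2)\omega_P(\zeta) = 0$, because $\log(r^2/|\zeta|^2)\omega_P(\zeta)$ is a radial measure and the average of a harmonic function vanishing at the center against a radial measure is its value at the center. (Alternatively one can bound $|\Re F\circ\phi_z|$ directly using the gradient estimate $|2\Re dF - d\vp|_{\omega_P}\le C_r$ together with $|d\vp|_{\omega_P}$ being controlled on $D_r(0)$ via Lemma \ref{ddbar-est} applied to $\Delta\vp$, but the mean-value argument is cleaner.) Hence $|\vp(z) - \vp_r(z)| \le a_r^{-1}\int_{D_r(0)}(C_r + 2|\Re F\circ\phi_z|)\log(r^2/|\zeta|^2)\omega_P$; the $C_r$ term integrates to $C_r$, and the remaining term needs a uniform bound on $|\Re(F\circ\phi_z)|$ over $D_r(0)$, which is where I expect the only real work: one gets it from the centering estimates of Lemma \ref{weight-centering} (the value-comparison bound $|2\Re F(\zeta) - \vp(\zeta)+\vp(z)|\le C_r$ combined with $|\vp(\zeta)-\vp(z)|\le C_r'$ on $D_r(z)$, the latter again from Lemma \ref{ddbar-est}).

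Finally, the Hilbert space statement is immediate: \eqref{ptwise-wt-comp} gives $e^{-C_r} \le e^{-\vp_r(z)}/e^{-\vp(z)} \le e^{C_r}$ pointwise on $\D$, so the identity map is bounded with bounded inverse both on $\sh^2(\D, e^{-\vp}\omega_P) \to \sh^2(\D, e^{-\vp_r}\omega_P)$ and, evaluating at points of $\Gamma$ and recalling that the weights $A_{\omega_P}(\gamma)$ are constant (the Remark following Theorem \ref{disk-char}), on $\ell^2(\Gamma, e^{-\vp}) \to \ell^2(\Gamma, e^{-\vp_r})$. The main obstacle is thus the uniform-over-$z$ control of the harmonic correction $\Re(F\circ\phi_z)$ on $D_r(0)$; everything else is bookkeeping with the radial averaging kernel and the already-cited lemmas.
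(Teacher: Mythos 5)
Your treatment of the Hilbert-space isomorphisms is fine, and your route to \eqref{ptwise-wt-comp} via Lemma \ref{weight-centering} and the mean-value property is a legitimate alternative to the paper's, which instead invokes $\mathrm{Aut}(\D)$-invariance to reduce to $z=0$ and then cites the Euclidean analogue from \cite{v-rs1}. But the argument for the Laplacian bound contains a genuine gap, and the write-up of the pointwise estimate tangles itself up unnecessarily.

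\textbf{The Laplacian bound.} The identity you invoke, $\Delta(\vp\circ\phi_z) = (\Delta\vp)\circ\phi_z$ as $(1,1)$-forms, is the pullback identity in the $\zeta$-variable, i.e., it computes $\Delta_{\zeta}\big[\vp(\phi_z(\zeta))\big]$ with $\phi_z$ held fixed. But ``pushing $\Delta$ through the integral'' requires $\Delta_z\big[\vp(\phi_z(\zeta))\big]$, and these are not the same: for fixed $\zeta$, the map $z\mapsto\phi_z(\zeta)=(z-\zeta)/(1-\bar z\zeta)$ is neither holomorphic nor antiholomorphic in $z$, so the isometry argument does not transfer. Concretely, take $\vp(w)=\Re(w^2)$, which is harmonic, so that $(\Delta\vp/\omega_P)(\phi_z(\zeta))\equiv 0$; yet a direct computation gives
\[
\partial_z\partial_{\bar z}\big[\vp(\phi_z(\zeta))\big]\Big|_{z=0}=-2\big(\zeta^2+\bar\zeta^2\big)\neq 0.
\]
So the pointwise identity you need is false. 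The formula you write down for $\Delta\vp_r$ \emph{is} correct, but only after the integration: the extra terms coming from $\vp_w$, $\vp_{\bar w}$, $\vp_{ww}$, $\vp_{\bar w\bar w}$ in the Taylor expansion of $\vp(\phi_z(\zeta))$ in $z$ at $z=0$ cancel under the angular average of the radial kernel $\log(r^2/|\zeta|^2)\,\omega_P(\zeta)$, leaving exactly $(1-|\zeta|^2)^2\,\vp_{w\bar w}(-\zeta)$. Alternatively, one can observe that $\vp\mapsto\vp_r$ is a radial convolution on the symmetric space $\D\cong PSU(1,1)/U(1)$ and therefore commutes with the Laplace--Beltrami operator. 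Either argument works, but something beyond differentiation under the integral sign is required; your proof as written does not supply it.

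\textbf{The pointwise estimate.} Here the issue is only logical economy, but it matters: you take absolute values before exploiting the mean-value property, which is why you end up believing you need a uniform bound on $|\Re(F\circ\phi_z)|$. Instead, use the signed form of Lemma \ref{weight-centering}: $\vp(z)-\vp(\phi_z(\zeta)) = -2\Re F(\phi_z(\zeta)) - E(\phi_z(\zeta))$ with $|E|\le C_r$ on $D_r(z)$. Then
\[
\vp(z)-\vp_r(z) \;=\; -\frac{2}{a_r}\int_{D_r(0)}\Re\big(F\circ\phi_z\big)\,\log\tfrac{r^2}{|\zeta|^2}\,\omega_P \;-\; \frac{1}{a_r}\int_{D_r(0)} (E\circ\phi_z)\,\log\tfrac{r^2}{|\zeta|^2}\,\omega_P,
\]
and the first term is \emph{exactly} zero ($F\circ\phi_z$ is holomorphic on $D_r(0)$, vanishes at $0$, and the kernel is radial), while the second is at most $C_r$ in modulus. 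No bound on $|\Re(F\circ\phi_z)|$ is needed; the ``remaining real work'' you anticipated does not exist once the triangle inequality is postponed.
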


\begin{proof}
The bounds on $\Delta \vp _r$ are obvious from the second integral representation of $\vp _r$.  Next, since all the conditions are invariant under action by ${\rm Aut}(\D)$, it suffices to prove the estimates \eqref{ptwise-wt-comp} for $z=0$.  But at the origin, this estimate follows easily from the Euclidean case, which was done in \cite{v-rs1}.
\end{proof}

Let $\Gamma \subset \D$ be a closed discrete subset.  Choose any function $T \in \co (\D)$ such that ${\rm Ord}(T)=\Gamma$, and, with  
\begin{equation}\label{cr-defn}
c_r =  \int _{\A_ r} \log \frac{r^2}{|\zeta|^2} \omega _P (\zeta) , \quad r \in (1/2,1), 
\end{equation}
we set
\[
\lambda^T_r (z)= \frac{1}{c_r} \int _{\A_r} \log |T(\phi _z(\zeta))|^2 \log  \frac{r^2}{|\zeta|^2} \omega _P (\zeta) =  \frac{1}{c_r} \int _{\phi _z (\A_r)} \log |T(\zeta)|^2 \log  \frac{r^2}{|\phi _z(\zeta)|^2}\omega _P (\zeta) .
\]

\begin{prop}\label{lambda-lemma-disk}
Let the notation be as above.
\begin{enumerate}
\item[(a)] The functions $\sigma _r^{\Gamma} :\D \to [0,\infty)$  and $S^{\Gamma}_r : \Gamma \to (0,\infty)$ defined by 
\[
\sigma^{\Gamma} _r (z) = |T(z)|^2 e^{-\lambda _r^T(z)} \quad \text{and} \quad S^{\Gamma}_r(\gamma) := |dT(\gamma)|^2_{\omega _P}e^{-\lambda ^T_r (\gamma)}, 
\]
as well as the $(1,1)$-form 
\[
\Upsilon ^{\Gamma} _r := \Delta \lambda _r ^T,
\]
are independent of the choice of $T$.  Moreover, for each $r \in (0,1)$ and $z \in \D$ (and in the case of $S^{\Gamma}_r (z)$, $z \in \Gamma$) the three quantities $\sigma _r^{\Gamma}(z)$, $S_r ^{\Gamma}(z)$ and $\Upsilon _r ^{\Gamma}(z)$ depend only of the finite set $D_r(z) \cap \Gamma$ in the sense that we may use any function $T \in \co (D_r(z))$ satisfying ${\rm Ord}(T) = \Gamma \cap D_r(z)$ to determine these three quantities.
\item[(b)] $\sigma^{\Gamma} _r \le 1$.
\item[(c)] For any $\gamma \in \Gamma$ and any $z \in D_{R^o _{\Gamma}}(\gamma)$ such that $|\phi_{\gamma}(z)| > \ve$, we have the estimate 
\[
\sigma ^{\Gamma} _r (z) \ge C_r \ve ^2.
\]
On the other hand, $\frac{1}{\sigma ^{\Gamma}_r}$ is not locally integrable in any neighborhood of any point of $\Gamma$.
\item[(d)] One has the formula
\begin{equation}\label{density-formula} 
\frac{\Upsilon ^{\Gamma} _r (z)}{2\omega _P(z)} = \frac{2\pi}{c_r} \sum _{\gamma \in \phi _z(\Gamma) \cap \A_r} \log \frac{1}{|\phi _z(\gamma)|^2}.
\end{equation}
\end{enumerate}
\end{prop}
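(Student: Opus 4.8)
The plan is to verify the four assertions essentially by unwinding the definitions, exploiting the conformal invariance of $\omega_P$ under $\mathrm{Aut}(\D)$, and applying the estimates already collected in Subsection \ref{bdd-lap} together with the Poisson--Jensen Formula. For part (a), the key observation is that any two choices $T_1,T_2\in\co(\D)$ with $\mathrm{Ord}(T_i)=\Gamma$ differ by multiplication by a non-vanishing holomorphic function $g$, so $\log|T_1|^2-\log|T_2|^2=2\re\log g$ is harmonic. Averaging a harmonic function against the measure $\log\frac{r^2}{|\zeta|^2}\omega_P(\zeta)$ on $\A_r$ reproduces a harmonic function, and in fact — since this measure is radially symmetric and $\log\frac{r^2}{|\zeta|^2}$ is the truncated Green's function — the mean value property shows that $\lambda^{T_1}_r-\lambda^{T_2}_r=2\re\log g$ as well (this is the usual computation behind Jensen's formula). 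Hence $|T|^2e^{-\lambda^T_r}$ and $\Delta\lambda^T_r$ are unchanged, and the same holds for $|dT|^2_{\omega_P}e^{-\lambda^T_r}$ on $\Gamma$ because at a point of $\Gamma$ the derivative $dT$ transforms by the non-vanishing factor $g$. The locality statement follows because $\log\frac{r^2}{|\phi_z(\zeta)|^2}$ is supported in $D_r(z)$, so only the zeros of $T$ inside $D_r(z)$ enter; one extends such a local $T$ to all of $\D$ (possibly after shrinking $r$ slightly, or directly since $D_r(z)$ is simply connected and one can divide out the finitely many zeros) without affecting the three quantities.

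For part (b), I would apply the Poisson--Jensen Formula (Theorem \ref{Jensen-hyp}) with $\psi\equiv 0$ to the function $T$ on the disk $D_r(z)$: after rearranging, $\log\sigma^\Gamma_r(z)=\log|T(z)|^2-\lambda^T_r(z)$ equals (a positive multiple of) $-\sum_j\log\frac{r^2}{|\phi_{a_j}(z)|^2}\le 0$ since each $|\phi_{a_j}(z)|<r$, where $a_j$ are the zeros of $T$ in $D_r(z)$; the boundary term is precisely $\lambda^T_r(z)$ up to the normalizing constant $c_r$ versus the full-disk Green's measure — here one must be careful that $\lambda^T_r$ integrates over the annulus $\A_r$ rather than the full disk $D_r(0)$, but the part of $D_r(0)$ inside $|\zeta|\le 1/2$ contributes a bounded harmonic correction that can be absorbed, or one redefines via the boundary integral directly. (The cleanest route is to note that $-\log\sigma^\Gamma_r$ is the balayage-type average that Poisson--Jensen identifies as a sum of non-negative terms.) Part (d) is the same computation read the other way: applying $\Delta$ to $\lambda^T_r(z)=\frac{1}{c_r}\int_{\phi_z(\A_r)}\log|T(\zeta)|^2\log\frac{r^2}{|\phi_z(\zeta)|^2}\omega_P(\zeta)$ and using $\Delta\log|T|^2=2\pi\sum_{\gamma\in\Gamma}\delta_\gamma$ together with the fact that $\Delta_z$ of the kernel reproduces (a multiple of) $\omega_P(z)$ times the point evaluations, one obtains \eqref{density-formula}; alternatively differentiate the Poisson--Jensen identity of part (b) in $z$.

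For part (c), the lower bound $\sigma^\Gamma_r(z)\ge C_r\ve^2$ for $z\in D_{R^o_\Gamma}(\gamma)$ with $|\phi_\gamma(z)|>\ve$ comes from the explicit formula: when $z$ lies in the separation disk around $\gamma$, the only zero of $T$ in $D_r(z)$ that can be close to $z$ is $\gamma$ itself (all other zeros are pseudohyperbolically far by definition of $R^o_\Gamma$, hence bounded away from $z$ inside $D_r(z)$, contributing a factor bounded below by a constant $C_r$), so $\log\sigma^\Gamma_r(z)\ge\log\frac{|\phi_\gamma(z)|^2}{r^2}+\log C_r\ge 2\log\ve+\log C_r$. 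The non-integrability of $1/\sigma^\Gamma_r$ near a point $\gamma\in\Gamma$ follows because near $\gamma$ one has $\sigma^\Gamma_r(z)\sim c|\phi_\gamma(z)|^{2k}$ with $k=\mathrm{mult}_\gamma(T)\ge 1$, and $\int |\phi_\gamma(z)|^{-2k}\omega_P$ diverges already for $k=1$. The main obstacle I anticipate is purely bookkeeping rather than conceptual: keeping track of the normalization discrepancy between the annular average defining $\lambda^T_r$ (integral over $\A_r=\{1/2<|\zeta|<r\}$ with constant $c_r$) and the full-disk Green's kernel that appears naturally in Poisson--Jensen (integral over $D_r(0)$ with constant $a_r$), and checking that the "missing" inner disk $\{|\zeta|\le 1/2\}$ only perturbs things by a bounded, zero-Laplacian amount so that parts (b) and (d) survive with the stated exact formulas — in particular verifying that the radial symmetry makes this correction genuinely harmonic and does not spoil the inequality in (b).
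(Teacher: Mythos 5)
Your treatment of part (a) is sound: two admissible $T$'s differ by a nonvanishing holomorphic factor $g$, so $\log|T_1|^2-\log|T_2|^2=2\re\log g$ is harmonic, and since the kernel $\tfrac{1}{c_r}\log\tfrac{r^2}{|\zeta|^2}\,\omega_P(\zeta)\big|_{\A_r}$ is a radially symmetric probability measure, the annular average of a harmonic function equals the value at the centre, giving $\lambda^{T_1}_r-\lambda^{T_2}_r=2\re\log g$; this is exactly what is needed. The non-integrability claim in (c) and the spirit of (d) are also fine.

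However, your argument for (b) contains a genuine gap. You assert that $\log\sigma^\Gamma_r(z)=\log|T(z)|^2-\lambda^T_r(z)$ equals a positive multiple of $-\sum_j\log\tfrac{r^2}{|\phi_{a_j}(z)|^2}$ via Poisson--Jensen, and that ``the boundary term is precisely $\lambda^T_r(z)$ up to the normalizing constant.'' Neither is true: the Poisson--Jensen boundary term $\tfrac{1}{\pi}\int_{\di D_r(z)}\log|T|^2\,d^cG_z$ is a genuine boundary integral, whereas $\lambda^T_r(z)$ is an \emph{interior} annular average, and the two differ by more than a normalization. Concretely, writing $m(z,s)$ for the circular average of $\log|T\circ\phi_z|^2$ at Euclidean radius $s$ and applying Jensen's formula radius by radius, one finds
\[
\lambda^T_r(z)-\log|T(z)|^2 \;=\; \frac{2\pi}{c_r}\sum_{\gamma}\int_{\max(1/2,\,|\phi_z(\gamma)|)}^{\,r}\log\frac{s^2}{|\phi_z(\gamma)|^2}\,\log\frac{r^2}{s^2}\,\frac{s\,ds}{(1-s^2)^2}\,\mathbf{1}_{\{|\phi_z(\gamma)|<r\}},
\]
a sum with $z$- and $\gamma$-dependent weights, not a constant multiple of the Jensen sum. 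The clean route to (b) is to skip Poisson--Jensen entirely: since $\log|T\circ\phi_z|^2$ is subharmonic and the kernel is a radial probability measure on $\A_r$, the sub-mean-value property gives $\lambda^T_r(z)\ge\log|T(z)|^2$ immediately. The same confusion undermines your claimed inequality $\log\sigma^\Gamma_r(z)\ge\log\tfrac{|\phi_\gamma(z)|^2}{r^2}+\log C_r$ in (c); the correct lower bound does follow from the displayed formula above (each weighted term for a fixed $\gamma$ is bounded above by $\log\tfrac{r^2}{|\phi_z(\gamma)|^2}$, and for the other $\gamma$'s one also needs uniform separation to control the \emph{number} of points in $D_r(z)$), but not from the identity you invoke. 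Finally, for (d), your plan to differentiate $\lambda^T_r$ under the integral and use $\Delta\log|T|^2=2\pi\sum\delta_\gamma$ is the right idea, but the assertion that ``$\Delta_z$ of the kernel reproduces a multiple of $\omega_P(z)$ times point evaluations'' is only a slogan; the actual computation requires differentiating the explicit weight above (a $C^1$ function of $|\phi_z(\gamma)|^2$), keeping track of the cancellation of $\Delta h$ against $\Delta L_\gamma=-2\pi\delta_\gamma$ for $\gamma$ within pseudohyperbolic distance $1/2$ of $z$, and verifying there are no distributional contributions on the circles $|\phi_z(\gamma)|\in\{1/2,r\}$ (which holds because the weight is $C^1$ there).
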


\noi The proof is directly analogous to the corresponding proposition in \cite{v-rs1}, and is left to the reader.

\subsection{Sufficiency}

In this section we prove the following result.

\begin{d-thm}\label{disk-suff}
Let $\vp \in \sC ^2 (\D)$ be a weight function satisfying 
\[
m \omega _P \le \Delta \vp -2 \omega _P\le M \omega _P
\]
for some positive constants $m$ and $M$.  Let $\Gamma \subset \D$ be uniformly separated in the pseudohyperbolic distance, and assume $D^+_{\vp} (\Gamma)  < 1$.  Then the restriction map $\sr _{\Gamma} : \sh ^2 (\D, e^{-\vp} \omega _P) \to \ell ^2 (\Gamma , e^{-\vp})$ is surjective.
\end{d-thm}

In fact, we shall prove a slightly stronger result.

\begin{d-thm}[Strong sufficiency]\label{disk-strong-suff}
Let $\vp \in L ^1_{\ell oc} (\D)$ be a subharmonic weight function satisfying 
\[
\Delta \vp  -2 \omega _P \ge m \omega _P
\]
for some positive constant $m$.  Let $\Gamma \subset \D$ be uniformly separated in the pseudohyperbolic distance, and assume 
\[
\Delta \vp - 2\omega _P \ge (1+\ve) \Upsilon ^{\Gamma}_r
\]
for some positive number $\ve$.  Then the restriction map $\sr _{\Gamma} : \sh ^2 (\D, e^{-\vp} \omega _P) \to \ell ^2 (\Gamma , e^{-\vp})$ is surjective.
\end{d-thm}

In view of Proposition \ref{mean-wt-hilb}, Theorem \ref{disk-suff} follows from Theorem \ref{disk-strong-suff}.

\subsubsection{Local extensions}

We shall need the following lemma.

\begin{lem}\label{loc-extn-disk}
Let $r \in (0,1/2)$ and $z \in \D$, and let $\vp$ be a subharmonic function in the unit disk satisfying 
\[
\Delta \vp \ge 2 \omega _P.
\]
Then there exists a holomorphic function $g_z \in \co (D_r(z))$ such that 
\[ 
g_z(z) = e^{\vp(z)/2} \quad \text{and} \int _{D_r(z)} |g_z|^2e^{-\vp} \omega _P \le 32 \pi r^2.
\]
\end{lem}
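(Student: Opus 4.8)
The plan is to reduce the statement, via an automorphism of $\D$ and via replacing the Poincar\'e weight by a flat one, to a direct application of the $L^2$ extension theorem (Theorem \ref{ot-basic}) from the single point $\{0\}$ in the Euclidean disk $D_r(0)=\{|w|<r\}$.

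First I would reduce to $z=0$. Since $\phi _z$ is an involutive automorphism of $\D$, an isometry of $\omega _P$, with $\phi _z(z)=0$ and $\phi _z(D_r(0))=D_r(z)$ (and $D_r(0)=\{|w|<r\}$ because $\phi _0(\zeta)=-\zeta$), it suffices to find $g\in\co(D_r(0))$ with $g(0)=e^{\vp(0)/2}$ and $\int_{D_r(0)}|g|^2e^{-\vp}\omega _P\le 32\pi r^2$, for then $g_z:=g\circ\phi _z$ works; here $\vp$ now denotes $\vp\circ\phi _z$, still subharmonic and still satisfying $\Delta\vp\ge 2\omega _P$ because $\phi _z$ is holomorphic and $\phi _z^*\omega _P=\omega _P$. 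If $\vp(0)=-\infty$ take $g\equiv 0$; otherwise $\vp(0)\in\R$. Next I would pass to the flat weight $\vp _1:=\vp+\log(1-|w|^2)$ on $\D$. Because $\Delta\log\frac{1}{1-|w|^2}=2\omega _P$, the hypothesis $\Delta\vp\ge 2\omega _P$ is precisely the statement that $\vp _1$ is subharmonic, and $\vp _1(0)=\vp(0)\in\R$. Since $e^{-\vp}\omega _P=e^{-\vp _1}\,\frac{dA}{1-|w|^2}$ with $dA=\tfrac{\ii}{2}dw\wedge d\bar w$, and $1-|w|^2>1-r^2>\tfrac34$ on $D_r(0)$ (as $r<\tfrac12$), it is enough to produce $g\in\co(D_r(0))$ with $g(0)=e^{\vp _1(0)/2}$ and $\int_{D_r(0)}|g|^2e^{-\vp _1}\,dA\le 24\pi r^2$, because then $\int_{D_r(0)}|g|^2e^{-\vp}\omega _P\le\tfrac43\cdot 24\pi r^2=32\pi r^2$.

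Finally I would invoke Theorem \ref{ot-basic} with $X=D_r(0)$ carrying the flat K\"ahler metric $\omega=dA$ (so ${\rm Ricci}(\omega)=0$ and $dV_\omega=dA$), $Z=\{0\}$, $H=\co$ with metric $e^{-\vp _1}$, and $L_Z$ trivialized over the contractible set $D_r(0)$ so that its canonical section is $T=w$ with metric $e^{-\lambda}$, $\lambda\equiv 2\log r$, and $\delta=1$. Then $\sup_{D_r(0)}|T|^2e^{-\lambda}=\sup|w|^2r^{-2}\le 1$, while $\di\dbar\lambda=0$ and ${\rm Ricci}(\omega)=0$ make both curvature hypotheses of the theorem reduce to $\ii\di\dbar\vp _1\ge 0$, which holds. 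With datum $f:=e^{\vp _1(0)/2}\in H^0(Z,H)=\C$ the extension estimate reads
\[
\int_{D_r(0)}|F|^2e^{-\vp _1}\,dA\ \le\ \frac{24\pi}{\delta}\cdot\frac{|f|^2e^{-\vp _1(0)}}{|dT(0)|^2_\omega\,e^{-\lambda(0)}}\ =\ 24\pi\cdot\frac{r^2}{|dw(0)|^2_\omega}\ \le\ 24\pi r^2 ,
\]
so $g:=F$ has the required properties and the proof is complete.

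The one real obstacle is the middle reduction. Applying the extension theorem with $\omega _P$ directly would require absorbing its Ricci curvature $-4\omega _P$, and $\Delta\vp\ge 2\omega _P$ supplies only half of what is needed for that, so the estimate would not see the factor $r^2$. Subtracting $\log\frac{1}{1-|w|^2}$ converts this borderline curvature bound into plain subharmonicity of $\vp _1$; the flat metric then has no Ricci to fight, the constant $\lambda\equiv 2\log r$ is admissible and is what produces the $r^2$, and the factor $\tfrac43$ lost in $e^{-\vp}\omega _P\le\tfrac43 e^{-\vp _1}\,dA$ is exactly what upgrades $24\pi r^2$ to $32\pi r^2$. (One should also confirm the normalization of $|dw|^2_\omega$ in the cited form of Theorem \ref{ot-basic} — every usual convention gives $|dw(0)|^2_\omega\ge 1$, which is all that is used — and, should one worry that $(D_r(0),dA)$ is incomplete, first extend on $D_{r'}(0)$ for $r'\nearrow r$ and pass to a locally uniform limit.)
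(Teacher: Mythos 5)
Your proof is correct and is essentially the paper's own argument, repackaged: the paper keeps the Poincar\'e metric and absorbs its Ricci curvature into the modified weight $\psi=\vp+\log\frac{1}{1-|\phi_z|^2}$ (taking $\lambda\equiv 0$ and $T=\phi_z/r$), whereas you reduce to $z=0$, pass to the flat metric $dA$, absorb the curvature into $\vp_1=\vp+\log(1-|w|^2)$, and put the factor $r^2$ into $\lambda=2\log r$ rather than into $T$ --- but since $e^{-\psi}\omega_P=e^{-\vp_1}\,dA$ and the curvature hypothesis in both cases reduces to $\ii\di\dbar\vp-2\omega_P\ge 0$, the two computations coincide step for step, including the intermediate bound $24\pi r^2$ and the $4/3$-factor from $1-r^2>3/4$.
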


\begin{proof}
Note that if $\vp(z) = -\infty$ then we can take $g_z\equiv 0$, so we assume from here on out that $\vp(z) \neq -\infty$.  Let $\psi(\zeta) = \vp(\zeta) +\log \frac{1}{1-|\phi _z(\zeta)|^2}$.  Note that on $D_r(z)$, 
\[
\psi (\zeta) \le \vp (\zeta) + \log \frac{1}{1-r^2}.
\]
We apply Theorem \ref{ot-basic} with $X=D_r(z)$, $T(\zeta) = \frac{\phi _z(\zeta)}{r}$,  $\lambda \equiv 0$ and $\delta = 1$. Since 
\[
|dT(z)|^2_{\omega_P}e^{-\lambda (z)} = r^{-2} \quad \text{and} \quad \ii \di \dbar \psi +{\rm Ricci}(\omega _P) = \ii \di \dbar \vp - 2\omega _P \ge 0 = 2\Delta \lambda,
\]
there exists a function $g_z \in \co (D_r(z))$ such that 
\[
g_z(z) = e^{\vp(z)/2} \quad \text{and} \quad \int _{D_r(z)} |g_z|^2e^{-\psi} \omega _P \le 24\pi r^2.
\]
But then 
\[
\frac{3}{4}\int _{D_r(z)} |g_z|^2 e^{-\vp} \omega _P  \le  (1-r^2) \int _{D_r(z)} |g_z|^2 e^{-\vp} \omega _P \le  \int _{D_r(z)} |g_z|^2 e^{-\psi} \omega _P  \le 24\pi r^2.
\]
The proof is thus complete.
\end{proof}

\subsubsection{Global extension: The proof of Theorem \ref{disk-strong-suff}}

Let $\delta < R_{\Gamma}$ be a positive constant.  Suppose we are given data $f \in \ell ^2(\Gamma, e^{-\vp})$.  For each $\gamma \in \Gamma$, let $g_{\gamma} \in \co (D_{\delta} (\gamma))$ be a function such that 
\[
g_{\gamma}(\gamma) = e^{\vp(\gamma)/2} \quad \text{and} \quad \int _{D_{\delta}(\gamma)} |g_{\gamma}|^2e^{-\vp} \omega _P \le 32 \pi.
\]
Such $g_{\gamma}$ exist by Lemma \ref{loc-extn-disk}.   Now let $\chi \in \sC ^{\infty}(\R)$ be a decreasing function satisfying
\[
\chi(x) = 1 \text{ for } x \le \tfrac{1}{2}, \ \chi (x) = 0 \text{ for }x\ge 1, \text{ and } |\chi '(x)| \le 3 \text{ for all }x.
\]
Consider the function 
\[
\tilde F(\zeta) := \sum _{\gamma \in \Gamma} f(\gamma)e^{-\vp(\gamma)/2} g_{\gamma}(\zeta) \chi \left ( \delta ^{-2} |\phi _{\gamma}(\zeta)|^2\right ).
\]
Then 
\[
\tilde F \in \sC ^{\infty} (\D) \quad \text{and} \quad \tilde F|_{\Gamma} = f.
\]
By Lemma \ref{loc-extn-disk} and the definition of $R_{\Gamma}$, 
\[
\int _{\D} |\tilde F|^2e^{-\vp} \omega _P \le \sum _{\gamma \in \Gamma} |f(\gamma)|^2 e^{-\vp(\gamma)}\int _{D_{\delta}(\gamma)} |g_{\gamma}|^2e^{-\vp} \omega _P \le 32\pi ||f||^2,
\]
so that $\tilde F \in L^2(\D, e^{-\vp} \omega _P)$.  We now correct $\tilde F$ to be holomorphic and still interpolate $f$.  We compute that the (automatically $\dbar$-closed) $(0,1)$-form 
\[
\alpha(\zeta) :=  \dbar \tilde F(\zeta) = \sum _{\gamma \in \Gamma} f(\gamma)e^{-\vp(\gamma)/2} g_{\gamma}(\zeta) \chi '\left ( \delta^{-2} |\phi _{\gamma}(\zeta)|^2\right )\frac{\phi _z (\zeta) \overline{\phi _z'(\zeta)}}{\delta^2}d\bar \zeta.
\]
We therefore seek a solution $u$ of the equation $\dbar u = \alpha$ that lies in $L^2(\D, e^{-\vp}\omega _P)$ and vanishes along $\Gamma$.  To this end, consider the weight function 
\[
\psi  = \vp + \log \sigma _r ^{\Gamma},
\]
where $\sigma_r^{\Gamma}$ is as in Proposition \ref{lambda-lemma-disk}.  Note that $\chi '(x) = 0$ for $|x|\le 1/2$ and that 
\[
|d \phi _z|^2_{\omega _P} = (1-|\phi _z|^2)^2\le 1.
\]
Then by the definition of $R_{\Gamma}$ and Property (c) of Proposition \ref{lambda-lemma-disk},
\begin{eqnarray*}
\int _{\D} |\alpha|^2_{\omega _P} e^{-\psi} \omega _P &=& \sum _{\gamma \in \Gamma} |f(\gamma)|^2e^{-\vp(\gamma)}\int _{D_{\delta}(\gamma)} |g_{\gamma}(\zeta)|^2 e^{-\psi(\zeta)} \left | \chi '\left ( \delta^{-2} |\phi _{\gamma}(\zeta)|^2\right )\right |^2 \frac{|\phi _z (\zeta)|^2 |d\phi _z(\zeta)|^2_{\omega_P}}{\delta^4}\omega _P\\
&\le & 9C_r \delta ^{-4} \sum _{\gamma \in \Gamma} |f(\gamma)|^2e^{-\vp(\gamma)}\int _{D_{\delta}(\gamma)} |g_{\gamma}(\zeta)|^2 e^{-\vp(\zeta)} \omega _P < +\infty.
\end{eqnarray*}
Since 
\[
\Delta \psi - 2 \omega _P \ge \frac{1}{1+\ve}\left ( \Delta \vp - 2\omega _P - (1+\ve)\Upsilon ^{\Gamma} _r\right ) + \frac{\ve}{1+\ve} \left ( \Delta \vp -2\omega _P \right ) \ge \frac{m\ve}{1+\ve} \omega _P,
\]
Theorem \ref{disk-odf-thm} gives us a function $u$ such that 
\[
\dbar u = \alpha \quad \text{and} \quad \int _{\D} |u|^2e^{-\psi} \omega _P < +\infty.
\]
By the smoothness of $\alpha$ and the interior ellipticity of $\dbar$, $u$ is smooth.  By Property (c) of Proposition \ref{lambda-lemma-disk}, in particular the non-integrability of $e^{-\psi}$ along $\Gamma$, $u|_{\Gamma} \equiv 0$.  Furthermore, by (b) of Proposition \ref{lambda-lemma-disk}, 
\[
\int_{\D} |u|^2 e^{-\vp} \omega _P \le \int _{\D}|u|^2e^{-\psi} \omega _P < +\infty.
\]
Finally, set $F = \tilde F- u$.  Then $F \in \co (\D)$, $F|_{\Gamma} = f$, and
\[
\int _{\D}|F|^2e^{-\vp} \omega _P \le 2 \left ( \int _{\D}|\tilde F|^2e^{-\vp} \omega _P + \int _{\D}|u|^2e^{-\vp} \omega _P \right ) < +\infty.
\]
This completes the proof of Theorem \ref{disk-strong-suff}.
\qed

\subsection{Necessity}

In this section we complete the proof of Theorem \ref{disk-char} by proving the following theorem.

\begin{d-thm}\label{necess-disk}
Let $\vp \in \sC^2(\D)$ be a weight function satisfying 
\[
m \omega _P \le \Delta \vp - 2 \omega _P \le M \omega _P
\]
for some positive constants $m$ and $M$, and let $\Gamma \subset \D$ be a closed discrete set.  If 
\[
\sr _{\Gamma} :\sh ^2 (\D, e^{-\vp}\omega _P) \to \ell ^2 (\Gamma, e^{-\vp})
\]
is surjective, then $\Gamma$ is uniformly separated in the hyperbolic distance, and $D^+_{\vp}(\Gamma) < 1$.
\end{d-thm}

\begin{rmk}
As mentioned in the introduction, our proof of Theorem \ref{necess-disk} is an adaptation to the unit disk of the work \cite{quimseep} of Ortega Cerd\`-Seip in the case of the Euclidean plane.
\red
\end{rmk}

\subsubsection{Interpolation constant}\label{disk-interp-const-par}

As we explained in \cite{v-rs1} for the analogous case of the complex plane, if $\sr _{\Gamma} : \sh ^2(\D , e^{-\vp}\omega _P) \to \ell ^2 (\Gamma, e^{-\vp})$ is surjective, then by the Closed Graph Theorem the so-called {\it minimal extension operator} $\se _{\Gamma} : \ell ^2 (\Gamma, e^{-\vp}) \to {\rm Kernel}( \sr _{\Gamma})^{\perp} \subset \sh ^2(\D , e^{-\vp}\omega _P)$ is continuous, and moreover has minimal norm among all extension operators.  The norm 
\[
\sa _{\Gamma} := ||\se _{\Gamma}||
\]
of this minimal extension operator is called the {\it interpolation constant of $\Gamma$}.
 
\subsubsection{Necessity of uniform separation}\label{disk-unif-sep-nec}

Suppose $\Gamma \subset \D$ is an interpolation sequence, and let $\gamma_1, \gamma _2 \in \Gamma$ be any two distinct points.  The function $f : \Gamma \to \C$ defined by  
\[
f(\gamma_1) = e^{-\vp (\gamma _1)/2} \quad \text{and} \quad f(\gamma) = 0, \ \gamma \in \Gamma - \{\gamma _1\},
\] 
lies in (the unit sphere of) $\ell ^2 (\Gamma , e^{-\vp})$, and thus $F := \se _{\Gamma} (f) \in \sh ^2 (\Gamma , e^{-\vp}\omega _P)$ satisfies 
\[
|F(\gamma_1)|^2e^{-\vp(\gamma _1)} = 1, \quad |F(\gamma _2)|^2e^{-\vp(\gamma _2)} = 0, \quad \text{and} \quad \int _{\D} |F|^2 e^{-\vp} \omega _P \le \sa _{\Gamma} ^2.
\]
Consider the functions
\[
g := F \circ \phi _{\gamma_1} \in \co (\D) \quad \text {and} \quad \psi := \vp \circ \phi _{\gamma _1} \in \sC ^2 (\D).
\]
Since $\phi _{\gamma _1}$ is an isometry of $\omega _P$, the weight $\psi$ and $\vp$ satisfy the same curvature bounds.  Moreover, 
\[
|g(0)|^2e^{\psi(0)} = 1, \quad |g(\phi _{\gamma _1}(\gamma _2))|^2e^{-\psi (\phi _{\gamma _1}(\gamma _2))} = 0 \quad \text{and} \quad \int_{\D} |g|^2e^{-\psi} \omega _P = \int _{\D} |F|^2 e^{-\vp} \omega _P \le \sa ^2_{\Gamma}.
\]
By Proposition \ref{bergman}(b), there is a universal constant $C>0$ such that
\[
|\phi _{\gamma_1}(\gamma_2)|^{-1} = \left | \frac{|g(0)|^2e^{\psi(0)}- |g(\phi _{\gamma _1}(\gamma _2))|^2e^{-\psi (\phi _{\gamma _1}(\gamma _2))}}{ 0 - \phi _{\gamma_1}(\gamma_2)} \right | \le C \sa _{\Gamma}^2.
\]
Thus $\Gamma$ is uniformly separated.

\subsubsection{Uniform interpolation at a point}

\begin{lem}\label{1-pt-interp-disk}
Let $\vp: \D \to [-\infty, \infty)$ be an upper semicontinuous weight function satisfying 
\[
\Delta \vp - 2 \omega _P \ge c \omega _P
\]
for some positive constant $c$.  Then there exists a constant $C >0$, depending only on $c$ and not on $\vp$ such that for all $z \in \D$ there exists $F \in \sh ^2(\D, e^{-\vp} \omega _P)$ satisfying 
\[
|F(z)|^2e^{-\vp(z)} = 1 \quad \text{and} \quad \int _{\D} |F|^2e^{-\vp} \omega _P \le C.
\]
\end{lem}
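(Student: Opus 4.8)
The plan is to run, for the single-point set $\Gamma=\{z\}$, the same ``local extension plus $\dbar$-correction'' argument used to prove Theorem~\ref{disk-strong-suff}, but with the elementary weight $|\phi_z|^2$ in place of $\sigma^\Gamma_r$ (no averaging is needed for one point). We may assume $\vp(z)\neq -\infty$, since otherwise the asserted identity $|F(z)|^2e^{-\vp(z)}=1$ cannot even be formulated. Since $\Delta\vp-2\omega_P\ge c\omega_P$ forces $\Delta\vp\ge 2\omega_P$, Lemma~\ref{loc-extn-disk} (say with $r=\tfrac14$) furnishes $g_z\in\co(D_{r}(z))$ with $g_z(z)=e^{\vp(z)/2}$ and $\int_{D_{r}(z)}|g_z|^2e^{-\vp}\omega_P\le 32\pi r^2$. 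Multiplying by the cutoff $\chi$ from the proof of Theorem~\ref{disk-strong-suff}, I would set $\tilde F(\zeta):=g_z(\zeta)\,\chi\!\left(r^{-2}|\phi_z(\zeta)|^2\right)$; as there, this is a compactly supported $\sC^\infty$ function on $\D$ with $\tilde F(z)=e^{\vp(z)/2}$ and $\int_{\D}|\tilde F|^2e^{-\vp}\omega_P\le 32\pi r^2$.

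Next I would correct $\tilde F$ to a holomorphic function without disturbing its value at $z$. The $\dbar$-closed form $\alpha:=\dbar\tilde F$ is supported in $\{r/\sqrt2\le|\phi_z|\le r\}$, hence away from $z$; using that $\phi_z$ is an $\omega_P$-isometry --- so that $|\phi_z'(\zeta)|^2(1-|\zeta|^2)^2=(1-|\phi_z(\zeta)|^2)^2\le1$ --- together with $|\chi'|\le3$ and $|\phi_z|\le r$ on the support, one gets $\int_{\D}|\alpha|^2_{\omega_P}e^{-\vp}\omega_P\le C_0\int_{D_r(z)}|g_z|^2e^{-\vp}\omega_P$ with $C_0$ depending only on the fixed $r$. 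To force the $\dbar$-solution to vanish at $z$, introduce the upper semicontinuous weight
\[
\psi:=\vp+2\log|\phi_z|,
\]
for which $\ii\di\dbar\psi=\ii\di\dbar\vp+2\pi\delta_z\ge(2+c)\omega_P$, while $e^{-\psi}=|\phi_z|^{-2}e^{-\vp}$ is not locally integrable at $z$ and satisfies $e^{-\psi}\ge e^{-\vp}$ on all of $\D$ (since $|\phi_z|<1$ there). Writing $2+c=2\bigl(1+\tfrac c4\bigr)+\tfrac c2$ and applying Theorem~\ref{disk-odf-thm} with $\nu=\tfrac c4$, I obtain $u$ with $\dbar u=\alpha$ and $\int_{\D}|u|^2e^{-\psi}\omega_P\le\tfrac{2(c+4)}{c^2}\int_{\D}|\alpha|^2_{\omega_P}e^{-\psi}\omega_P$; since $|\phi_z|^{-2}\le 2r^{-2}$ on the support of $\alpha$, the right-hand side is bounded by a constant depending only on $c$. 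By interior ellipticity $u$ is smooth, it is holomorphic near $z$ (where $\alpha$ vanishes), and the non-integrability of $e^{-\psi}$ at $z$ forces $u(z)=0$.

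Finally, $F:=\tilde F-u$ is holomorphic on $\D$ with $F(z)=e^{\vp(z)/2}$, so $|F(z)|^2e^{-\vp(z)}=1$, and $\int_{\D}|F|^2e^{-\vp}\omega_P\le 2\bigl(\int_{\D}|\tilde F|^2e^{-\vp}\omega_P+\int_{\D}|u|^2e^{-\psi}\omega_P\bigr)\le C(c)$. The only delicate point --- and the real obstacle --- is exactly this last dependence: the constant must involve $c$ alone, not $z$ or $\vp$. This is automatic because every estimate apart from the one furnished by Theorem~\ref{disk-odf-thm} is manifestly uniform in $z$ and $\vp$ (the $\omega_P$-isometries $\phi_z$ absorb all $z$-dependence, Lemma~\ref{loc-extn-disk} is uniform, and $e^{-\psi}\ge e^{-\vp}$ handles the final triangle-inequality step), while the Donnelly-Fefferman-Ohsawa constant $\tfrac{\nu+1}{c'\nu}$ depends only on $c$ once $\nu$ and $c'$ are taken as the fixed multiples $\tfrac c4$ and $\tfrac c2$ of $c$.
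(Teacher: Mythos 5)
Your proof is correct, and it is essentially the argument the paper has in mind: the paper's proof simply cites Theorem~\ref{disk-strong-suff}, noting that the density of the one-point sequence is zero and that the uniformity of the constant ``follows from its proof,'' whereas you have carried that argument out explicitly for $\Gamma=\{z\}$. Your one genuine simplification is to replace the averaged singular weight $\vp + \log\sigma^\Gamma_r$ of Proposition~\ref{lambda-lemma-disk} by the elementary weight $\psi = \vp + 2\log|\phi_z|$. For a single point this works just as well --- $2\log|\phi_z|$ is non-positive (so $e^{-\psi}\ge e^{-\vp}$), its Laplacian is the non-negative Dirac mass $2\pi\delta_z$ (so the curvature hypothesis of Theorem~\ref{disk-odf-thm} is met with $\nu$ and $c'$ fixed multiples of $c$), and $e^{-\psi}$ is non-integrable precisely at $z$ --- and it spares you the machinery of averaged potentials, which is really designed for infinite sequences. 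It also makes the $z$- and $\vp$-uniformity of the resulting constant visible step by step, rather than asserted. Your arithmetic is right ($\tfrac{\nu+1}{c'\nu} = \tfrac{2(c+4)}{c^2}$ with $\nu=c/4$, $c'=c/2$), and the final bound indeed depends only on $c$ and the fixed radius $r$.
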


\begin{proof}
Since the density of the one point sequence $\Gamma := \{z\}$ is zero, the result follows from Theorem \ref{disk-strong-suff}.  (The uniformity of the constant, though not explicitly stated in Theorem \ref{disk-strong-suff},  follows from its proof.)
\end{proof}

\subsubsection{Perturbation of interpolation sequences}

As in \cite{v-rs1}, in order to estimate the density of the sequence we shall perturb our interpolation sequence in two ways:  a small perturbation of the points of $\Gamma$, and the addition of a point to $\Gamma$.  In the hyperbolic disk, the estimates are slightly better than their Euclidean kin, owing to the existence of a bounded entire function that realizes a zero of multiplicity $1$ at a given point.

\begin{prop}[Small perturbation]\label{jiggle-disk}
Let $\vp \in \sC ^2 (\D)$ satisfy 
\[
-M \omega _P \le \Delta \vp \le M \omega _P
\]
for some positive constant $M$.  Let $\Gamma = \{ \gamma _1,\gamma _2,... \} \subset \D$ be an interpolation sequence with separation radius $R_{\Gamma}$.  Suppose $\Gamma ' = \{ \gamma _1',\gamma _2', ... \} \subset \D$ is another sequence, and there is a $\delta \in (0,\min(\sa _{\Gamma}^{-1}, R_{\Gamma}))$ such that 
\[
\sup _{i \in \N} |\phi _{\gamma _i}(\gamma _i')| \le \delta ^2.
\]
Then $\Gamma '$ is also an interpolation sequence, and its interpolation constant is at most 
\[
\frac{C\sa _{\Gamma}}{1-\delta \sa _{\Gamma}},
\]
where $C$ is independent of $\Gamma$ and $\vp$ (but depends on $M$).
\end{prop}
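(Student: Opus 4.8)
The plan is to show $\Gamma'$ is interpolating by a perturbation (Neumann series) argument: one manufactures an extension operator for $\Gamma'$ out of $\se_\Gamma$ by transplanting data from $\Gamma'$ to $\Gamma$, extending by $\se_\Gamma$, and correcting the small resulting error. Two preliminaries are needed. First, a \emph{transplant isomorphism}: since $-M\omega_P\le\Delta\vp\le M\omega_P$ and the pseudohyperbolic distance from $\gamma_i$ to $\gamma_i'$ is $\le\delta^2<R_\Gamma<1$, Lemma \ref{weight-centering} (applied on a fixed pseudohyperbolic disk about $\gamma_i$) compares $e^{-\vp(\gamma_i)}$ and $e^{-\vp(\gamma_i')}$ up to a constant depending only on $M$, so the index-matching map $\Phi:\ell^2(\Gamma',e^{-\vp})\to\ell^2(\Gamma,e^{-\vp})$, $(\Phi f)(\gamma_i):=f(\gamma_i')$, is an isomorphism with $\|\Phi\|,\|\Phi^{-1}\|\le C_0=C_0(M)$. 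Second, $\Gamma'$ is \emph{still uniformly separated}: by M\"obius invariance of the pseudohyperbolic distance and the reverse triangle inequality, a perturbation of size $\delta^2<R_\Gamma$ leaves a separation radius bounded below in terms of $R_\Gamma$, so $\sr_{\Gamma'}$ is bounded and Corollary \ref{Bergman-sums-disk} applies to $\Gamma'$.

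Next I would introduce the \emph{defect operator} $E$ on $\ell^2(\Gamma',e^{-\vp})$ by $Ef:=\sr_{\Gamma'}(\se_\Gamma\Phi f)-f$ and prove $\|E\|<1$. Writing $G:=\se_\Gamma\Phi f$ one has $G(\gamma_i)=f(\gamma_i')$, $\|G\|_{\sh^2}\le\sa_\Gamma\|\Phi f\|\le C_0\sa_\Gamma\|f\|$, and $(Ef)(\gamma_i')=G(\gamma_i')-G(\gamma_i)$, so the matter reduces to a quantitative bound for the change of a Bergman function between two pseudohyperbolically close points. I would obtain it by trivializing the weight near $\gamma_i$: by Lemma \ref{weight-centering} there is a holomorphic $F_i$ with $F_i(\gamma_i)=0$ for which $h_i:=Ge^{-F_i}$ is holomorphic on a fixed disk $D_r(\gamma_i)$ and $|h_i|^2$ is comparable, up to the harmless factor $e^{\vp(\gamma_i)}$ with constants depending only on $M$, to $|G|^2e^{-\vp}$ there; a Cauchy estimate at the fixed radius $r$ together with the Schwarz--Pick bound ${\rm dist}_P(\gamma_i,\gamma_i')\lesssim\delta^2$ then give $|h_i(\gamma_i')-h_i(\gamma_i)|^2\lesssim\delta^4\int_{D_r(\gamma_i)}|h_i|^2\omega_P$, hence $|G(\gamma_i')-G(\gamma_i)|^2e^{-\vp(\gamma_i')}\lesssim\delta^4\int_{D_r(\gamma_i)}|G|^2e^{-\vp}\omega_P$. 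Summing over $i$ and invoking Corollary \ref{Bergman-sums-disk} for $\Gamma'$ yields $\|Ef\|\le C_1\delta^2\sa_\Gamma\|f\|$; since $\delta^2<\delta R_\Gamma<\delta$ and $\delta\sa_\Gamma<1$, tracking the constants (kept clean by the hyperbolic Schwarz--Pick inequality, as in the remark preceding the proposition) forces $\|E\|<1$. This is exactly where both hypotheses $\delta<R_\Gamma$ and $\delta<\sa_\Gamma^{-1}$ are used.

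With $\|E\|<1$ the operator $I+E$ is invertible on $\ell^2(\Gamma',e^{-\vp})$, and $\se_{\Gamma'}:=\se_\Gamma\,\Phi\,(I+E)^{-1}$ is a right inverse of $\sr_{\Gamma'}$, proving that $\Gamma'$ is interpolating. Its norm is at most $\|\se_\Gamma\|\,\|\Phi\|\,\|(I+E)^{-1}\|\le C_0\sa_\Gamma/(1-\|E\|)$, and inserting the bound on $\|E\|$ gives an estimate of the form $C\sa_\Gamma/(1-\delta\sa_\Gamma)$ with $C$ depending only on $M$; since the minimal extension operator has the least norm among all extension operators, $\sa_{\Gamma'}$ obeys the same bound.

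The main obstacle is the defect estimate of the second paragraph, and specifically the need to control the \emph{complex} difference $G(\gamma_i')-G(\gamma_i)$ rather than only the difference of the weighted moduli $|G|^2e^{-\vp}$ that Proposition \ref{bergman}(b) supplies directly --- exact interpolation at $\Gamma'$ is a statement about complex values, so the phase matters. The device for handling this is the local trivialization $G=h_ie^{F_i}$ via Lemma \ref{weight-centering}, and the delicate point is not the inequality but the uniformity of every constant that appears --- in $i$, in $\Gamma$, and in $\vp$ --- which is what makes the final $C$ as advertised. A secondary, routine point is verifying the uniform separation of $\Gamma'$ so that Corollary \ref{Bergman-sums-disk} may be applied to it.
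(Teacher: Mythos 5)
The skeleton of your argument — transplant map, defect operator, Neumann series — is the standard one and matches what the paper (via its references to \cite{v-rs1} and \cite{quimseep}) does; and you are right to flag that the iteration needs control of a \emph{complex} difference, not merely the modulus difference in \eqref{comparison-eucl}. However, the step you flag as the crux is exactly where your argument has a gap.

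You set $h_i = G e^{-F_i}$ with $F_i$ from Lemma~\ref{weight-centering}, so $F_i(\gamma_i)=0$. Then $h_i(\gamma_i)=G(\gamma_i)$, but
\[
h_i(\gamma_i') \;=\; G(\gamma_i')\,e^{-F_i(\gamma_i')},
\]
and the factor $e^{F_i(\gamma_i')}$ is \emph{not} close to $1$. Lemma~\ref{weight-centering} controls $2\Re F_i(\zeta) - \vp(\zeta)+\vp(\gamma_i)$ but says nothing about $\Im F_i$, and it does not bound $\vp(\gamma_i')-\vp(\gamma_i)$ (the hypothesis $|\Delta\vp|\le M\omega_P$ carries no gradient bound: take $\vp = A\,\Re\zeta$ with $A$ large). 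So $|e^{F_i(\gamma_i')}|\asymp e^{(\vp(\gamma_i')-\vp(\gamma_i))/2}$ can be arbitrarily large, and its argument is uncontrolled. Consequently the inference
\[
|h_i(\gamma_i')-h_i(\gamma_i)|^2\lesssim\delta^4\int_{D_r(\gamma_i)}|h_i|^2\omega_P
\;\Longrightarrow\;
|G(\gamma_i')-G(\gamma_i)|^2 e^{-\vp(\gamma_i')}\lesssim\delta^4\int_{D_r(\gamma_i)}|G|^2 e^{-\vp}\omega_P
\]
does not hold: the left side of the conclusion is \emph{not} comparable to $|h_i(\gamma_i)-h_i(\gamma_i')|^2e^{-\vp(\gamma_i)}$. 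In fact the asserted complex-difference estimate is false even locally for weights with unbounded gradient, essentially by the example $\vp=2A\Re\zeta$, $G=e^{A\zeta}$, $\gamma_i=0$, $\gamma_i'=i\ve$.

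The repair is to build the weight-centering factor into the transplant. Define
\[
(\Phi f)(\gamma_i) := f(\gamma_i')\,e^{-F_i(\gamma_i')}.
\]
By Lemma~\ref{weight-centering}, $|e^{-F_i(\gamma_i')}|^2 e^{-\vp(\gamma_i)}\asymp e^{-\vp(\gamma_i')}$, so $\Phi$ is a quasi-isometry $\ell^2(\Gamma',e^{-\vp})\to\ell^2(\Gamma,e^{-\vp})$ with constants depending only on $M$. With $G=\se_\Gamma\Phi f$, $h_i=Ge^{-F_i}$, one then has $f(\gamma_i')=G(\gamma_i)e^{F_i(\gamma_i')}$ and
\[
f(\gamma_i')-G(\gamma_i') \;=\; e^{F_i(\gamma_i')}\bigl[h_i(\gamma_i)-h_i(\gamma_i')\bigr],
\]
whence
\[
|f(\gamma_i')-G(\gamma_i')|^2 e^{-\vp(\gamma_i')} \asymp |h_i(\gamma_i)-h_i(\gamma_i')|^2 e^{-\vp(\gamma_i)} \lesssim \delta^4\int_{D_r(\gamma_i)}|G|^2e^{-\vp}\omega_P ,
\]
by your Cauchy estimate, and now the troublesome factor cancels exactly. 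Summing (using the bounded overlap of the disks $D_r(\gamma_i)$) and running your Neumann series as before gives the stated bound. In short: the ingredients are right, but the naive index-matching $\Phi$ leaves an uncontrolled phase/amplitude factor in the defect; the centering factor $e^{-F_i(\gamma_i')}$ must be absorbed into the transplant for the Cauchy estimate to close the loop.

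Two smaller remarks. First, the uniform bound $\|E\|\le C_1\delta^2\sa_\Gamma$ that comes out of this is smaller than $\delta\sa_\Gamma$ only if $C_1\delta\le 1$; the assertion that ``tracking the constants \dots forces $\|E\|<1$'' appeals to the hyperbolic remark preceding the proposition, which in fact concerns the ``adding a point'' argument, not this one, so that justification is misplaced (the issue is present in the statement of the proposition as well, and is customarily handled by absorbing $C_1$ into the choice of $\delta$; it is not special to your proof). Second, the displayed estimate \eqref{comparison-eucl} in the paper is indeed only a modulus-difference estimate; your instinct that the complex difference is what the iteration really needs is correct, and the referenced Proposition~2.11 of \cite{v-rs1} carries out precisely the weight-centered transplant described above.
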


\begin{proof}
Using the method of proof of the uniform separation of an interpolation sequence, together with Corollary \ref{Bergman-sums-disk}(b), if $F \in \sh ^2(\D, e^{-\vp}\omega_P)$ we find that 
\begin{equation}\label{comparison-eucl}
\sum _{j=1} ^{\infty} \left | |F(\gamma_j)|^2e^{-\vp(\gamma_j)} - |F(\gamma'_j)|^2e^{-\vp(\gamma'_j)}\right | \lesssim \delta ^2 \int _{\D} |F|^2e^{-\vp} \omega _P.
\end{equation}
The rest of the proof proceeds in the same way as that of Proposition 2.11 of \cite{v-rs1}, which is itself a minor adaptation of Lemma 6 in \cite{quimseep}.
\end{proof}

\begin{prop}[Adding a point]\label{add-disk}
Assume $m\omega _P \le \Delta \vp - 2\omega _P \le M \omega _P$ for some positive constants $m$ and $M$.  Let $\Gamma$ be an interpolation sequence, and let $z \in \D - \Gamma$ satisfy $\inf _{\gamma \in \Gamma} |\phi _z(\gamma)| > \delta$.  Then the sequence $\Gamma_z  := \Gamma \cup \{z\}$ is also an interpolation sequence for $\sh ^2(\D, e^{-\vp}\omega _P)$, and its interpolation constant is bounded above by some constant $K$ which depends only on $m$, $\Gamma$ and $\delta$, and in particular, not on $z$.
\end{prop}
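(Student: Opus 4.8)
The plan is to reduce the statement to the already-established interpolation property of $\Gamma$ together with the one-point interpolation Lemma \ref{1-pt-interp-disk}, glued by a partition of unity and corrected by the Donnelly--Fefferman--Ohsawa estimate of Theorem \ref{disk-odf-thm}. Given data $h \in \ell^2(\Gamma_z, e^{-\vp})$, split it as $h = h|_\Gamma \oplus h(z)$. First I would let $F_0 := \se_\Gamma(h|_\Gamma) \in \sh^2(\D, e^{-\vp}\omega_P)$ be the minimal extension along $\Gamma$, with $\int_\D |F_0|^2 e^{-\vp}\omega_P \le \sa_\Gamma^2 \|h|_\Gamma\|^2$; note $F_0$ need not vanish at $z$, but $|F_0(z)|^2 e^{-\vp(z)} \le C\sa_\Gamma^2 \|h|_\Gamma\|^2$ by Bergman's inequality, Proposition \ref{bergman}(a). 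Second, I would use Lemma \ref{1-pt-interp-disk} (with $c = m$) to produce $G \in \sh^2(\D, e^{-\vp}\omega_P)$ with $|G(z)|^2 e^{-\vp(z)} = 1$ and $\int_\D |G|^2 e^{-\vp}\omega_P \le C$, and also, using the local extension Lemma \ref{loc-extn-disk} near $z$, arrange (after rescaling $G$) a holomorphic $G_z$ on $D_\delta(z)$ realizing the correction value $h(z) - F_0(z)$ at $z$ with controlled local $L^2$ norm.

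The candidate interpolant is then $\tilde F := F_0 + \chi(\delta^{-2}|\phi_z|^2)\, G_z \cdot (\text{normalization})$, where $\chi$ is the cutoff from the proof of Theorem \ref{disk-strong-suff}; this is smooth, lies in $L^2(\D, e^{-\vp}\omega_P)$ with norm controlled by $\sa_\Gamma$, $C$, $\delta$ and the data, agrees with $h$ on $\Gamma_z$ (here one uses $\inf_{\gamma\in\Gamma}|\phi_z(\gamma)| > \delta$ so that the bump is supported away from $\Gamma$), and has $\dbar\tilde F = \alpha$ supported in the annulus $\{\delta^2/2 < \delta^{-2}|\phi_z|^2 < 1\}$ — in particular away from all of $\Gamma_z$. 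Third, I would correct: form $\psi := \vp + \log \sigma_r^{\Gamma_z}$ with $\sigma_r^{\Gamma_z}$ from Proposition \ref{lambda-lemma-disk}, observe $\Delta\psi - 2\omega_P \ge \frac{c'}{}\omega_P$ for a suitable $c' > 0$ (here one needs the density of $\Gamma_z$ to behave — see below), solve $\dbar u = \alpha$ by Theorem \ref{disk-odf-thm} with $\int_\D |u|^2 e^{-\psi}\omega_P$ controlled, deduce $u|_{\Gamma_z}\equiv 0$ from non-integrability of $e^{-\psi}$ along $\Gamma_z$ and $\int_\D |u|^2 e^{-\vp}\omega_P \le \int_\D |u|^2 e^{-\psi}\omega_P$ from $\sigma_r^{\Gamma_z}\le 1$, and set $F := \tilde F - u$. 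Tracking constants through these estimates yields an interpolation constant for $\Gamma_z$ bounded by a function of $m$, $M$, $\sa_\Gamma$ (equivalently, of $\Gamma$) and $\delta$ alone, independent of $z$.

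The main obstacle is the curvature bound $\Delta\psi - 2\omega_P \ge c'\omega_P$ needed to invoke Theorem \ref{disk-odf-thm} with a uniform constant: adding the point $z$ to $\Gamma$ increases $\Upsilon_r^{\Gamma_z}$ relative to $\Upsilon_r^{\Gamma}$, and a priori one does not know $\Delta\vp - 2\omega_P \ge (1+\ve)\Upsilon_r^{\Gamma_z}$. The resolution is that the extra mass $\Upsilon_r^{\Gamma_z} - \Upsilon_r^{\Gamma}$ is, by the formula \eqref{density-formula} of Proposition \ref{lambda-lemma-disk}(d), exactly the contribution of a single point in the pseudohyperbolic disk $D_r(\cdot)$, hence is bounded above by a fixed multiple of $\omega_P$ uniformly in $z$ (the weight $\log(r^2/|\phi_\cdot(z)|^2)$ integrated against the fixed measure $c_r^{-1}$ is a bounded quantity); combined with the slack from the hypothesis $\Delta\vp - 2\omega_P \ge m\omega_P$, after passing to $\psi$ one gets a uniform positive lower bound. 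A secondary technical point is that $\se_\Gamma$ is only defined when $\Gamma$ is interpolating, which is given, and one must be slightly careful that the cutoff bump centred at $z$ does not disturb the values of $\tilde F$ along $\Gamma$ — this is precisely what the hypothesis $\inf_{\gamma\in\Gamma}|\phi_z(\gamma)| > \delta$ guarantees, since the support of $\chi(\delta^{-2}|\phi_z|^2)$ lies in $D_\delta(z)$.
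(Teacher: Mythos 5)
Your overall scaffolding — one-point extension near $z$ combined with the interpolating property of $\Gamma$ — is the right idea, but your correction step has a genuine gap that I do not see how to close.

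The problem is in the sentence ``observe $\Delta\psi - 2\omega_P \ge c'\omega_P$'' for $\psi = \vp + \log \sigma_r^{\Gamma_z}$. Away from $\Gamma_z$ one has $\Delta \psi = \Delta\vp - \Upsilon_r^{\Gamma_z}$, so the inequality you need is
\[
\Delta\vp - 2\omega_P \;\ge\; \Upsilon_r^{\Gamma_z} + c'\omega_P .
\]
The hypotheses give only $\Delta\vp - 2\omega_P \ge m\omega_P$ and that $\Gamma$ is interpolating; there is \emph{no} bound on $\Upsilon_r^{\Gamma}$ available at this stage. Your observation that $\Upsilon_r^{\Gamma_z} - \Upsilon_r^{\Gamma}$ is uniformly bounded (which is correct, by Proposition~\ref{lambda-lemma-disk}(d)) fixes only the increment coming from the added point; it leaves $\Upsilon_r^{\Gamma}$ itself uncontrolled, and for a general interpolating $\Gamma$ that form can be larger than $m\omega_P$ at many points. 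One might be tempted to invoke the density bound $D^+_{\vp}(\Gamma) < 1$, but this proposition is itself an ingredient in the \emph{proof} of that necessity statement, so that would be circular. For the same reason, a density hypothesis on $\Gamma$ cannot be added to Proposition~\ref{add-disk} without weakening its role in the subsequent argument.

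The paper avoids the singular-weight $\dbar$ step altogether and never needs $\Delta\vp - 2\omega_P$ to dominate $\Upsilon_r^{\Gamma_z}$. After producing, via Lemma~\ref{1-pt-interp-disk} applied to the auxiliary weight $\psi_z := \vp - \tfrac{m}{2}\log\tfrac{1}{1-|\phi_z|^2}$, a function $G$ with $G(z) = e^{\vp(z)/2}$ and bounded $\sh^2(\D, e^{-\psi_z}\omega_P)$-norm, it uses Corollary~\ref{Bergman-sums-disk}(a) and the separation hypothesis $\inf_\gamma|\phi_z(\gamma)| > \delta$ to show that $\gamma \mapsto G(\gamma)/\phi_z(\gamma)$ lies in $\ell^2(\Gamma, e^{-\vp})$ with norm $\lesssim \delta^{-1}$. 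The interpolation property of $\Gamma$ then furnishes $H \in \sh^2(\D, e^{-\vp}\omega_P)$ interpolating this data, and one sets $F := G - \phi_z H$. Since $\phi_z$ is a bounded holomorphic function with a simple zero at $z$, this $F$ vanishes on $\Gamma$, takes the right value at $z$, and has controlled norm. Notice that the vanishing on $\Gamma$ is achieved \emph{algebraically} by subtracting a multiple of the Blaschke factor, not by a weighted $\dbar$ estimate; the only place $\dbar$ is used is inside Lemma~\ref{1-pt-interp-disk}, which requires nothing beyond the curvature hypothesis $\Delta\vp - 2\omega_P \ge m\omega_P$.

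One secondary remark: your ``candidate interpolant'' $\tilde F$ uses both the global one-point extension $G$ from Lemma~\ref{1-pt-interp-disk} and a local extension $G_z$ from Lemma~\ref{loc-extn-disk}, and it is unclear which one actually enters the formula; this should be cleaned up. But even after that is resolved, the correction step fails for the reason above.
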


\begin{proof}
It suffices to show that there exists $F \in \sh ^2(\D, e^{-\vp}\omega _P)$ satisfying 
\[
F(z) = e^{\vp(z)/2} \quad \text{and} \quad F|_{\Gamma} \equiv 0
\]
with appropriate norm bounds.  To this end, write 
\[
\psi _z := \vp - \frac{m}{2} \left ( \log\frac{1}{1- |\phi _z|^2}\right ).
\]
Since $\Delta \psi _z -2\omega _P \ge \frac{m}{2} \omega _P$, Lemma \ref{1-pt-interp-disk} provides us with a function $G \in \sh ^2(\D, e^{-\psi_z}\omega _P)$ such that 
\[
G(z) = e^{\vp (z)/2} \quad \text{and} \quad \int _{\D} |G|^2 e^{-\psi_z} \omega _P \le C,
\]
where $C$ does not depend on $z$ or $\Gamma$ (and in fact depends only on $m$).  

Now, since $\psi _z \le \vp$, by Corollary \ref{Bergman-sums-disk}(a) we have the estimate 
\[
\sum _{\gamma \in \Gamma} \frac{|G(\gamma)|^2e^{-\vp(\gamma)}}{|\phi _z(\gamma)|^2} \lesssim \frac{1}{\delta ^2}\sum _{\gamma \in \Gamma} \int _{D_{R_{\Gamma}}(\gamma)} |G|^2e^{-\psi_z} \omega _c \lesssim \frac{1}{\delta ^2}.
\]
Since $\Gamma$ is an interpolation sequence for $\sh ^2 (\D , e^{-\vp}\omega _P)$, there exists $H \in \sh ^2 (\D , e^{-\vp}\omega _P)$ such that 
\[
H(\gamma ) = \frac{G(\gamma)}{\phi _z(\gamma)}, \ \gamma \in \Gamma, \quad \text{and} \quad \int _{\D} |H|^2 e^{-\vp} \omega _P \lesssim \frac{\sa _{\Gamma}^2}{\delta ^2}.
\]
Let $F \in \co (\D)$ be defined by 
\[
F (\zeta) := G(\zeta) - \phi _z(\zeta) H(\zeta).
\]
Then 
\[
|F(z)|^2e^{-\vp(z)} = |G(z)|^2e^{-\vp(z)}=1, \quad \text{and} \quad F(\gamma) = G(\gamma) - \phi _z(\gamma)H(\gamma) = 0
\]
for all $\gamma \in \Gamma$.  Finally, 
\begin{eqnarray*}
\left ( \int _{\D} |F|^2 e^{-\vp} \omega _P \right )^{1/2} &\le& \left ( \int _{\D} |G|^2 e^{-\vp} \omega _P \right )^{1/2} + \left ( \int _{\D} |H(\zeta)|^2|\phi _z(\zeta)|^2 e^{-\vp(\zeta)} \omega _c(\zeta) \right )^{1/2}\\
& \le &  \left ( \int _{\C} |G|^2 e^{-\psi _z} \omega _c \right )^{1/2} + \left ( \int _{\C} |H(\zeta)|^2e^{-\vp(\zeta)}\omega _o(\zeta) \right )^{1/2}\le  \frac{C(1 + \sa _{\Gamma})}{\delta},
\end{eqnarray*}
as desired.
\end{proof}

\subsubsection{Estimate for the density of an interpolation sequence}

We wish to estimate the density of the interpolation sequence $\Gamma$ at an arbitrary point $z \in \D$.  Suppose first that $\inf _{\gamma \in \Gamma}|\phi _{\gamma}(z)| < \min (\sa _{\Gamma}^{-1}, R_{\Gamma})$.  By Proposition \ref{jiggle-disk}, we may replace the nearest point $\gamma_o$ of $\Gamma$ to $z$ with the point $z$, and obtain a new interpolation sequence $\Gamma^1 _z := (\Gamma - \{\gamma _o\} )\cup \{z\}$.  On the other hand, if $\inf _{\gamma \in \Gamma}|\phi _{\gamma}(z)| \ge \min (\sa _{\Gamma}^{-1}, R_{\Gamma})$, then by Proposition \ref{add-disk} the sequence $\Gamma^2_z = \Gamma \cup \{z\}$ is also an interpolating sequence.  In both cases, the interpolation constant remains under control, i.e., it is independent of $z$.  Let us write $\Gamma _z$ for either of the interpolation sequences $\Gamma ^1_z$ or $\Gamma ^2_z$ that arise.

Since $\Gamma _z$ is an interpolation sequence, there is a function $F \in \sh ^2(\D, e^{-\vp}\omega _P)$ such that 
\[
F(z) = e^{\vp(z)/2}, \quad F|_{\Gamma_z - \{z\}}\equiv 0, \quad \text{and} \quad ||F||\lesssim \sa _{\Gamma}.
\]
By the Poisson-Jensen Formula \ref{Jensen-hyp} applied to $f=F$ and $\psi = \vp - \log \frac{1}{1-|\phi _z|^2}$, we have 
\[
\int _{\phi _z (\A_r)} \log \frac{r^2}{|\phi _z(\zeta)|^2} \delta _{\Gamma} \le \int _{D_r(z)} \log \frac{r^2}{|\phi _z(\zeta)|^2}(\Delta \vp (\zeta) - 2\omega _P(\zeta)) + \frac{1}{\pi} \int _{\di D_r(z)} \log (|F|^2e^{-\psi} )d^cG_z.
\]
An application of Proposition \ref{bergman}(a), with disks of pseudohyperbolic radius $1/2$ centered at any point of $\di D_r(z)$, yields the estimate 
\[
\int _{\phi _z (\A_r)} \log \frac{r^2}{|\phi _z(\zeta)|^2} \delta _{\Gamma} \le \int _{D_r(z)} \log \frac{r^2}{|\phi _z(\zeta)|^2}(\Delta \vp (\zeta)-2\omega _P(\zeta)) + \log \frac{1}{(1-r^2)} + C,
\]
and thus, since 
\begin{equation}\label{jensen-est-disk}
\frac{1}{c_r} \int _{\phi _z (\A_r)} \log \frac{r^2}{|\phi _z(\zeta)|^2} \delta _{\Gamma} \le \frac{1}{c_r} \int _{D_r(z)} \log \frac{r^2}{|\phi _z(\zeta)|^2}(\Delta \vp (\zeta)-2\omega _P(\zeta)) + C
\end{equation}
for some constant $C$ that is independent of $r$ and $z$.

The estimate \eqref{jensen-est-disk} shows us that the density of $\Gamma$ is at most $1$.  But we can slightly perturb $\Gamma$ to increase its density at $z$, with the perturbation still an interpolation set. Indeed, by Proposition \ref{jiggle-disk}, we may move all the points of $\Gamma$ a pseudo-hyperbolic distance at most a sufficiently small number $\delta$ towards $z$, and the resulting sequence will still be interpolating, with interpolation constant controlled by that of the original sequence $\Gamma$.  To this end, choose $\delta > 0$ sufficiently small according to Proposition \ref{jiggle-disk}, and consider the sequence 
\[
\Gamma^{\delta} _z := \left \{ \phi _z \left ( \frac{\delta - |\phi _z(\gamma)|}{1-\delta |\phi _z(\gamma)|}\frac{\phi _z(\gamma)}{|\phi _z(\gamma)}\right )\ ;\ \gamma \in \Gamma \right \} .
\]
(Recall that $\phi _z$ is an involution.)  Writing 
\[
\gamma ' := \phi _z \left ( \frac{|\phi _z(\gamma)|-\delta}{1-\delta |\phi _z(\gamma)|}\frac{\phi _z(\gamma)}{|\phi _z(\gamma)}\right ),
\]
we have 
\[
|\phi _{\gamma}(\gamma')| =\left |  \phi _{\phi _z(\gamma)}\left ( \frac{|\phi _z(\gamma)|-\delta}{1-\delta |\phi _z(\gamma)|}\frac{\phi _z(\gamma)}{|\phi _z(\gamma)|}\right ) \right | = \delta |\phi _z(\gamma)|.
\]
It follows that the new sequence $\Gamma^{\delta}_z$ has the following property:  If we enumerate $\Gamma = \{\gamma _1,\gamma_2,....\}$ then there is an enumeration $\Gamma^{\delta}_z = \{ \gamma _1',\gamma _2',...\}$ such that 
\[
\sup _{i \in \N} |\phi _{\gamma _i}(\gamma _i')| \le \delta.
\]
It follows from Proposition \ref{jiggle-disk} that $\Gamma ^{\delta}_z$ is also interpolating, with interpolation constant controlled by that of $\Gamma$.  Therefore \eqref{jensen-est-disk} holds with $\Gamma ^{\delta}_z$ in place of $\Gamma$.  By changing variables in the integrals according to the transformation 
\[
\zeta \mapsto \phi _z \left ( \frac{r-\delta}{r(1-r\delta)}\phi _z(\zeta)\right )=: u,
\]
a straightforward calculation finds that for $r \sim 1$, 
\[
\int _{\phi _z (\A_r)} \log \frac{r^2}{|\phi _z(\zeta)|^2} \delta _{\Gamma} \le \int _{D_r(z)} \log \frac{r^2}{|\phi _z(\zeta)|^2}(\Delta \vp (\zeta)-2\omega _P(\zeta)) - m C_1 c_r \delta 
\]
for some positive constant $C_1$ that is independent of $z$, $r$ and $\delta$\footnote{Strictly speaking, this transformation changes the weight function $\vp$.  However, the new weight function satisfies the same hypotheses, and in particular the same curvature bounds, so the result holds for the original weight as well: see the analogous comment in \cite{v-rs1}.}.  (Here we have used that $\Delta \vp - 2\omega _P \ge m\omega _P$; the constant $c_r$ shows up because it is the hyperbolic area of $\A_r$, which in turn is asymptotic to the hyperbolic area of $D_r(z)$.)  It follows that 
\[
D^+_{\vp} (\Gamma) \le 1 - \frac{C_1m \delta }{M} < 1.  
\]
This completes the proof of Theorem \ref{disk-char}.
\qed

\section{Interpolation in $(\D ^*,\omega _P)$}\label{pdisk-section}

\subsection{Some elementary geometry of $(\D^*, \omega _P)$}

\subsubsection{Universal covers} 

We fix the universal covering map $\fp : \D \to \D^*$ defined by 
\[
\fp (x) = e^{\frac{x+1}{x-1}},
\]
and denote by $G_{\fp} \subset {\rm Aut}(\D)$ the deck group of $\fp$. We shall also have occasion to consider the upper half plane representation of the universal cover.  Letting 
\[
P :\C \to \C^*; z\mapsto e^{\ii z} \quad \text{and} \quad \h := \{ z\in \C\ ;\ \im z > 0\},
\]
we fix the covering map 
\[
P_{\h} := P|_{\h} : \h \to \D^*,
\]
and then the deck group is cyclic group generated by the translation $G(z) = z+2\pi$.  Of course,
\[
\fp (x) = P\left (\ii \tfrac{1+x}{1-x}\right ).
\]
\begin{rmk}
Observe also that $P :\C \to \C^*; z \mapsto e^{\ii z}$ is the universal cover for $\C^*$.
\red
\end{rmk}

\subsubsection{Distance and area}

We denote by $d_P$ the geodesic distance on $\D^*$ induced by the Poincar\'e metric 
\[
\omega _P = \frac{\ii dz \wedge d\bar z}{2|z|^2 (\log \frac{1}{|z|^2})^2}
\]
of $\D^*$.  One can calculate that if $\arg (z/w) = 0$ then 
\begin{equation}\label{ann-dist}
d_P(z,w) = \frac{1}{2} \left | \log \log \frac{1}{|z|^2} - \log \log \frac{1}{|w|^2} \right|,
\end{equation}
while if $|\theta - \phi| \le \pi$,  
\[
d_P\left (re^{\ii \theta}, re^{\ii \sigma}\right ) = \frac{|\theta - \sigma|}{2 \log \frac{1}{r}}.
\]
When the points $z$ and $w$ are sufficiently close together, we can find a disk in $\D^*$ that contains them both and is the biholomorphic image of a disk in $\D$, via the universal covering map.  Since the latter is a local isometry, the distance between the two points in question is the distance between their pre-images in the aforementioned disk.

Note that, at least for $z$ close to the origin, the injectivity radius of $\omega _P$ at $z$, defined to be the largest $r$ such that $\dot D _r(z)$ is contractible, is 
\[
\iota _P(z) = \frac{\pi}{2\log \frac{1}{|z|^2}}.
\]

Recall from the introduction that $\hat \iota _{\omega _P}(z) = \min (\iota _P(z), 1)$.  We can compute the area of the disk $\dot D_{\hat \iota _{\omega _P}(z)}(z)$.  Indeed, since the universal covering map $\fp :\D \to \D ^*$ is a local isometry, $\fp ^{-1}(\dot D_{\iota _P(z)}(z))$ is a disjoint union of hyperbolic disks in $\D$ of radius $\iota _P(z)$, with each disk centered at exactly one point of the sequence $\fp ^{-1}(z) \subset \D$.   Now, the Poincar\'e metric (of the unit disk) is invariant under the automorphism group.  Thus, in the notation of the introduction, 
\[
A _{\omega _P}(z) := \int _{\dot D_{\hat \iota _{\omega _P}(z)}(z)} \omega _P = \int _{|\zeta| < \tanh(\hat \iota _{\omega _P}(z))} \frac{\ii d\zeta \wedge d\bar \zeta}{2(1-|\zeta|^2)^2} = \frac{\pi (\tanh(\hat \iota _{\omega _P}(z)))^2}{1-(\tanh(\hat \iota _{\omega _P}(z)))^2}.
\]
It follows that for each $c \in (0,1)$ there exists a constant $C_c > 0$ with the property that 
\begin{equation}\label{puncture-a}
C_c^{-1} \frac{1}{(\log \frac{1}{|z|^2})^2} \le A _{\omega _P}(z) \sim C_c \frac{1}{(\log \frac{1}{|z|^2})^2}, \qquad 0 < |z| \le c,
\end{equation}
and 
\[
C_c^{-1} \le A_{\omega _P}(z) \le C_c, \qquad c < |z| < 1.
\]


\subsection{Sequences in $\D^*$}\label{pdisk-seq-section}

In view of the dichotomy of the growth rate of $A_{\omega _P}$, it is reasonable to split up the interpolation problem into two parts.  To this end, we make the following definition.

\begin{defn}
We say that a sequence $\Gamma_1 \subset \D^*$ is {\it supported near the puncture of $\D^*$} if there exists a positive constant $c<1$ such that $|\gamma| \le c$ for all $\gamma \in \Gamma_1$, and that $\Gamma_2\subset \D^*$ is {\it supported near the border of $\D$} if there exists a positive constant $c<1$ such that $|\gamma| > c$ for all $\gamma \in \Gamma_2$.
\red
\end{defn}

Note that, near the border of $\D^*$, the geometry is that of the hyperbolic unit disk.  On the other hand, near the puncture the geometry is really quite different, as we now show.

\subsubsection{Geometrically blowing up the puncture into a Euclidean cylinder}\label{cyl-geom-paragraph}

Let $\Gamma \subset \D^*$ be a closed discrete subset that is supported near the puncture of $\D^*$.  In view of the estimate \eqref{puncture-a} for $A_{\omega _P}$, the Hilbert space $\ell ^2 (\Gamma, \vp)$ is quasi-isometric to the Hilbert space 
\[
\fl ^2 (\Gamma, e^{-\vp}) := \left \{ f :\Gamma \to \C\ ;\ \sum _{\gamma \in \Gamma} \frac{|f(\gamma)|^2e^{-\vp(\gamma)}}{(\log \frac{1}{|\gamma|^2})^2} < +\infty \right \}.
\]
Thus a sequence $\Gamma$ supported near the puncture is an interpolation sequence if and only if the restriction map $\sr _{\Gamma} :\sh ^2(\D ^*, e^{-\vp}\omega _P) \to \fl ^2 (\Gamma, e^{-\vp})$ is surjective.

Now let 
\[
\psi (\zeta) := \vp (\zeta) + 2 \log \log \frac{1}{|\zeta|^2}.
\]
Then 
\[
e^{-\vp}\omega _P = e^{-\psi} \omega _c,
\]
where 
\[
\omega _c(\zeta)  = \frac{\ii d\zeta \wedge d\bar \zeta}{2 |\zeta|^2}
\]
is (the restriction to $\D^*$ of) the cylindrical metric for $\C^*$, which was studied in \cite{v-rs1}.  We therefore have 
\[
\sh ^2(\D^* , e^{-\vp} \omega _P) = \sh ^2 (\D^*, e^{-\psi}\omega _c) \quad \text{and} \quad \fl ^2 (\Gamma, e^{-\vp})  = \left \{ f :\Gamma \to \C\ ;\ \sum _{\gamma \in \Gamma} |f(\gamma)|^2e^{-\psi(\gamma)} < +\infty \right \},
\]
so the interpolation problem for sequences that are supported near the puncture is almost the same as the interpolation problem on the cylinder $(\C^*, \omega _c)$.  The latter problem was considered in \cite{v-rs1}, where we obtained necessary and sufficient conditions for interpolation.  With the cylindrical characterization in mind, and with the computation 
\[
\Delta \psi = \Delta \vp - 4 \omega _P = (\Delta \vp - 2\omega _P) - 2\omega _P,
\]
it is clear that the condition $\Delta \vp \ge (2+m)\omega _P$, which was sufficient for the case of the disk, will not work in the punctured disk.  We must ask that the weight $\vp$ further satisfy an inequality at least as strong as 
\[
\Delta \vp \ge 4\omega _P
\]
in some neighborhood of the origin in $\D^*$.  (In fact, there is a typically stronger density condition, which we will state shortly.)  

\begin{rmk}
In the cylindrical interpolation problem studied in \cite{v-rs1}, we had a somewhat stronger requirement, namely that $\Delta \psi \ge \ve \omega _c$ everywhere (in $\C^*$).  However, since we are working on $\D^*$ rather than $\C^*$, we have at our disposal both hyperbolic geometry and the technique of Donnelly-Fefferman-Ohsawa, and these will allow us to glue together extended data near the puncture and near the border of $\D^*$.
\red
\end{rmk}







\subsection{Weight averages and density of sequences supported near the border}

Since the geometry of the interpolation problem near the border is that of the hyperbolic unit disk, we begin by using hyperbolic geometry to define the density of a sequence supported near the border.

\subsubsection{Singularities along a sequence supported near the border}\label{border-sing-par}

Let $\Gamma \subset \D^*$ be a sequences supported near the border.   We can view $\Gamma$ as a subset of $\D$ and, as such, define the functions $\sigma^{\Gamma}_r: \Gamma \to \R$ and $S^{\Gamma}_r :\Gamma \to \R_+$, as well as the $(1,1)$-form $\Upsilon ^{\Gamma}_r$, as in Proposition \ref{lambda-lemma-disk}.  These functions are all obtained after choosing $T \in \co (\D)$ with ${\rm Ord}(T) = \Gamma$ and setting 
\[
\lambda ^T_r (z) := \frac{1}{c_r} \int _{\A _r} \log |T(\vp_z(\zeta))|^2 \log \frac{r^2}{|\zeta|^2} \omega^{\D} _P(\zeta),
\]
where $c_r$ is defined by \eqref{cr-defn}.  Of course, Proposition \ref{lambda-lemma-disk} applies to these objects.

In order to indicate that we are working near the border of $\D^*$, rather than in $\D$, we shall write
\[
\sigma ^{b,\Gamma}_r:= \sigma ^{\Gamma}_r, \quad S^{b,\Gamma}_r:=\sigma ^{\Gamma}_r \quad \text{and} \quad \Upsilon^{b,\Gamma}_r:=\Upsilon^{\Gamma}_r.
\]
We emphasize that, even though we are studying a problem on $\D^*$, we are using functions $T$ that are holomorphic across the origin.

\subsubsection{Logarithmic means}\label{border-means-par}

Let $\vp \in \sC ^2 (\D^*)$ be a smooth weight function.  The function $\vp$ is not necessarily smooth across the puncture, so it is necessary to modify it near the puncture.  This is easily done as follows:  let $h_c : [0,1] \to [0,1]$ be a smooth, increasing function such that $h|_{[0,c/2]}\equiv 0$ and $h|_{[c,1]}\equiv 1$, where $c\in (0,1)$.  Define the $c$-truncated logarithmic mean of $\vp$ as 
\[
\vp _{r,c} (z) := (h_c \vp)_r = \frac{1}{a_r} \int _{D_r(z)} h_c(|\zeta|)\vp (\zeta) \log \frac{r^2}{|\phi _z (\zeta)|^2} \omega _P^{\D}(\zeta).
\]
Note that if, in the set $0 < c/2 \le |z| <1$ the weight function $\vp$ satisfies the curvature estimates  
\[
- M \omega _P \le \Delta \vp \le M \omega _P,
\]
then 
\[
- M \omega _P \le \Delta \vp_{c,r} \le M \omega _P \quad \text{ in all of }\D.
\]
It follows from Proposition \ref{mean-wt-hilb} that $|h_c(z)\vp(z) - \vp _{c,r}(z)| \le \tilde C_r$.  In particular, we have the estimate 
\[
|\vp (z) - \vp _{c,r}(z)| \le C_r \quad \text{for }c\le |z| < 1,
\]
where $C_r$ is independent of $z$ (though it does depend on $c$ and the constant $M$).

\subsubsection{Density of a closed discrete subset supported near the border of $\D^*$}\label{bord-density-defn}

Let $\Gamma \subset \D^*$ be a sequence supported near the border.   Viewing $\Gamma$ as a sequence in $\D$, we define the upper density of $\Gamma$ to be 
\[
D^{b+}_{\vp} (\Gamma) := \inf \left \{ \alpha \ ;\ \forall\ r _o \in (c,1)\ \exists\ r\in (r_o,1) \text{ such that }\alpha \Delta \vp _{c,r} \ge \Upsilon ^{\Gamma}_r \right \}.
\]
Since the sequence is supported near the border, the inequality $\alpha \Delta \vp _{c,r} \ge \Upsilon ^{\Gamma}_r$ will hold if it holds near the border of $\D^*$.  Thus the definition of $D^{b+}_{\vp}(\Gamma)$ is independent of the choice of $c \in (0,1)$.

\begin{rmk}
If $\vp$ is a smooth weight in $\D$ then $D^{b+}_{\vp}(\Gamma)$ is just the upper density $D^+_{\vp}(\Gamma)$ of the sequence $\Gamma$ defined in (ii) of Theorem \ref{disk-char}.
\red
\end{rmk}

\subsection{Weight averages and density of sequences supported near the puncture}

In view of Paragraph \ref{cyl-geom-paragraph}, the geometry of the interpolation problem near the puncture is cylindrical.  We therefore adapt the ideas of \cite{v-rs1} to define weight averages and density of sequences supported near the puncture.

\subsubsection{Singularities along a sequence supported near the puncture}\label{p-sing-par}

For a locally integrable function $h$ in the Euclidean annulus $\A_r(z)$ of inner radius $1$ and outer radius $r>1$, and center $z \in \C$, we define 
\[
A^h_r (z) := \frac{1}{c_r} \int _{\A_r(z)}  h(\zeta) \log \frac{r^2}{|\zeta -z|^2}  \omega _o (\zeta) =  \frac{1}{c_r} \int _{\A_r(0)}  h(z-\zeta) \log \frac{r^2}{|\zeta|^2}  \omega _o (\zeta),
\]
where $\omega _o$ denotes the Euclidean metric in $\C$.  Note that if $h$ is $2\pi$-periodic, then so is $A^h_r$.

Let $\Gamma \subset \D^*$ be a closed discrete subset supported near the puncture.  There exists $T \in \co (\C^*) \subset \co (\D^*)$ such that ${\rm Ord}(T) = \Gamma$.  As in \cite{v-rs1}, we define the covered logarithmic mean $\check \lambda ^T _r : \C^* \to \R$ of $\log |T|^2$ by 
\[
P ^* \check \lambda ^T _r:= A^{P^*(\log |T|^2)}_r.
\]
Evidently the function $A^{P^*(\log |T|^2)}_r$ is $2\pi$-periodic, so $\check \lambda ^T _r$ is well-defined. 
\begin{defn}\label{b-singularity}
The function $\check \lambda ^T_r$ is called a {\it potential function} for $\Gamma$.
\red
\end{defn}

The following proposition was proved in \cite{v-rs1}.

\begin{prop}\label{lambda-lemma-pdisk}
Let $\Gamma \subset \D^*$ be a closed discrete subset that is supported near the puncture, and choose $T\in \co (\C^*)$ such that ${\rm Ord}(T) = \Gamma$.
\begin{enumerate}
\item[(a)] The functions $\sigma _r :\C^* \to [0,\infty)$ and $S_r :\Gamma \to (0,\infty)$ defined by 
\[
\sigma^{\Gamma} _r (z) = |T(z)|^2 e^{-\check\lambda _r^T(z)} \quad \text{and} \quad S^{\Gamma}_r (\gamma) = |dT(\gamma)|^2_{\omega _P}e^{-\check\lambda _r^T(\gamma)},
\]
and the $(1,1)$-form 
\[
\Upsilon ^{\Gamma} _r := \Delta \check \lambda _r ^T
\]
are independent of the choice of $T$.  
\item[(b)] $\sigma^{\Gamma} _r \le 1$.
\item[(c)] For any $\tilde \gamma \in \tilde \Gamma := P^{-1}(\Gamma)$, any $r$ such that $\tilde \Gamma \cap D^o _r (\tilde \gamma) = \{\tilde \gamma\}$, and $\ve \in (0,r)$, and any $z \in D^o_{r}(\tilde \gamma)$ such that $|\tilde \gamma- z| > \ve$, we have the estimate 
\[
\sigma ^{\Gamma} _r (P(z)) \ge C_r \ve ^2.
\]
On the other hand, $\frac{1}{\sigma ^{\Gamma}_r}$ is not locally integrable in any neighborhood of any point of $\Gamma$.
\item[(d)] One has the formula
\begin{equation}\label{density-formula} 
\frac{\Delta \check \lambda ^{T} _r (z)}{2\omega _c(z)} =  \frac{1}{c_r} \sum _{\tilde \gamma \in P^{-1}(\Gamma) \cap \A ^o_r(q)} \log \frac{r^2}{|\tilde \gamma-q|^2},
\end{equation}
where $q \in P^{-1} (z)$ is any point.
\end{enumerate}
\end{prop}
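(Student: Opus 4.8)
The plan is to reduce the entire statement to the corresponding assertions proved in \cite{v-rs1} for the cylinder $(\C^*,\omega_c)$, since $\check\lambda^T_r$, $\sigma^\Gamma_r$ and $\Upsilon^\Gamma_r$ are built only from $T\in\co(\C^*)$ and the covering $P:\C\to\C^*$ via the Euclidean averaging operator $A^h_r$, and involve neither $\D^*$ nor $\omega_P$. The single new feature is that $S^\Gamma_r$ is measured with $\omega_P$ rather than $\omega_c$; but on $\D^*$ one has $\omega_P=(\log\tfrac1{|\zeta|^2})^{-2}\omega_c$, a smooth positive conformal factor that is the same for every choice of $T$, so it will not affect the one assertion made about $S^\Gamma_r$, namely independence of $T$. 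Thus I would first record the two elementary properties of $A^h_r$ on which everything rests, and then read off (a)--(d).

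First, writing $A^h_r(z)=\frac1{c_r}\int_{\A_r(0)}h(z-\zeta)\log\frac{r^2}{|\zeta|^2}\,\omega_o(\zeta)$ with $c_r=\int_{\A_r(0)}\log\frac{r^2}{|\zeta|^2}\,\omega_o$, I claim that $A^h_r=h$ if $h$ is harmonic and $A^h_r\ge h$ if $h$ is subharmonic: integrating the (sub-)mean value property of $h$ over the circles $|\zeta|=t$ for $1\le t\le r$ against the nonnegative radial weight $\log\frac{r^2}{t^2}$ gives exactly this. For (b) I apply it with $h=P^*(\log|T|^2)$, which is subharmonic on $\C$ because $T\circ P$ is holomorphic; passing to the cover, $P^*\log\sigma^\Gamma_r=P^*(\log|T|^2)-A^{P^*(\log|T|^2)}_r\le 0$, so $\sigma^\Gamma_r\le 1$. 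For (a), if $T_1,T_2\in\co(\C^*)$ both have ${\rm Ord}(T_j)=\Gamma$, then $u:=T_1/T_2$ is nonvanishing on $\C^*$, so $u\circ P$ is a nonvanishing entire function and $P^*(\log|u|^2)$ is harmonic on $\C$; by the reproducing property this term cancels in $P^*\log\sigma^\Gamma_r$ and is annihilated by $\Delta$ in $\Upsilon^\Gamma_r=\Delta\check\lambda^T_r$, while at $\gamma\in\Gamma$ one has $d(T_1)(\gamma)=u(\gamma)\,d(T_2)(\gamma)$ (as $T_2(\gamma)=0$), compensated exactly by $e^{-\check\lambda^{T_1}_r(\gamma)}=|u(\gamma)|^{-2}e^{-\check\lambda^{T_2}_r(\gamma)}$, and the $\omega_P$-versus-$\omega_c$ factor cancels since it is the same for $T_1$ and $T_2$. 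For (d), I differentiate $A^h_r$ under the integral sign with $h=P^*(\log|T|^2)$ and use $\Delta\,P^*(\log|T|^2)=2\pi\sum_{\tilde\gamma\in P^{-1}(\Gamma)}\delta_{\tilde\gamma}$; the domain $\A_r(0)$ then singles out the preimages with $1<|\tilde\gamma-q|<r$, which is precisely the formula \eqref{density-formula} after the constants are tracked as in \cite{v-rs1}.

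For (c), fix $\tilde\gamma\in\tilde\Gamma=P^{-1}(\Gamma)$ with $\tilde\Gamma\cap D^o_r(\tilde\gamma)=\{\tilde\gamma\}$. Since ${\rm Ord}(T)=\Gamma$ means $T$ vanishes simply on $\Gamma$, near $\tilde\gamma$ I write $(T\circ P)(w)=(w-\tilde\gamma)g(w)$ with $g$ holomorphic and $g(\tilde\gamma)\ne0$, so that for $z\in D^o_r(\tilde\gamma)$,
\[
\log\sigma^\Gamma_r(P(z))=\log|z-\tilde\gamma|^2+\log|g(z)|^2-A^{P^*(\log|T|^2)}_r(z).
\]
Here $\log|g|^2$ is bounded below on $\overline{D^o_r(\tilde\gamma)}$, and $A^{P^*(\log|T|^2)}_r(z)$ is bounded above there uniformly in $z$ (the smooth part of $P^*(\log|T|^2)$ is bounded above on a neighborhood of the bounded family of annuli $\A_r(z)$, and the integrable $-\infty$ poles at the points of $\tilde\Gamma$ contribute a uniformly bounded amount to the average), so $\sigma^\Gamma_r(P(z))\ge e^{-C_r}|z-\tilde\gamma|^2\ge C_r\ve^2$ whenever $|z-\tilde\gamma|>\ve$. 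For the non-integrability of $1/\sigma^\Gamma_r$ near a point $\gamma$ of $\Gamma$: on the cover $\sigma^\Gamma_r\asymp|w-\tilde\gamma|^2$ near $\tilde\gamma$, so $1/\sigma^\Gamma_r\asymp|w-\tilde\gamma|^{-2}$, which is not locally integrable in $\C$.

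The hard part is the uniform upper bound for $A^{P^*(\log|T|^2)}_r(z)$ as $z$ ranges over $D^o_r(\tilde\gamma)$: the annuli $\A_r(z)$ move with $z$, and $P^*(\log|T|^2)$ carries logarithmic singularities at every point of the periodic set $\tilde\Gamma$, so one must split $P^*(\log|T|^2)$ into a smooth part that is bounded above on the relevant region and a sum of integrable singular contributions, and estimate each uniformly. This is exactly the bookkeeping carried out in \cite{v-rs1}, and since it takes place entirely on $(\C,\omega_o)$ it transfers to the present situation without change.
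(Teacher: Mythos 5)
Your proposal is essentially correct and follows the same route the paper takes: in the paper this proposition carries no proof at all, only the remark that it was established in the cylinder setting of \cite{v-rs1}, and your reduction to $(\C,\omega_o)$ via $P$ is precisely that reduction made explicit. You also correctly isolate the one point the citation glosses over, namely that $S^{\Gamma}_r$ is measured with $\omega_P$ rather than $\omega_c$, and observe that the conformal factor $\omega_P/\omega_c = (\log\tfrac{1}{|\zeta|^2})^{-2}$ is independent of $T$ and therefore harmless for part (a). Your checks of the two properties of $A^h_r$ (reproducing on harmonic functions, dominating on subharmonic ones) are the right ingredients for (a) and (b), and your treatment of (c) and (d) correctly identifies where the real work lies and defers it, as the paper does, to the bookkeeping in \cite{v-rs1}.
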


In order to indicate that we are working near the puncture of $\D^*$, rather than in $\C^*$, we shall write
\[
\sigma ^{*,\Gamma}_r:= \sigma ^{\Gamma}_r, \quad S^{*,\Gamma}_r:=\sigma ^{\Gamma}_r \quad \text{and} \quad \Upsilon^{*,\Gamma}_r:=\Upsilon^{\Gamma}_r.
\]

\subsubsection{$\ve$-extended Logarithmic means and $\ve$-extended covered means}\label{puncture-means-par}

Let $\tau$ be a locally integrable weight function on $\h$.  For a given small positive number $\ve$, the function 
\[
\tau _{\ve}(z) = \tau (z+\ve)
\]
is defined on the closure of $\h$ in $\C$.  We then define 
\[
\tau ^+_{\ve}(z) := \left \{
\begin{array}{c@{\qquad}l}
\tau _{\ve}(z), & z\in \h \\
\tau _{\ve}(\bar z), & z \in \C - \h
\end{array}
.
\right .
\]
Observe that if $a\omega _o \le \Delta \tau \le b\omega _o$ for any $a, b \in \R$, then the same is true for $\tau _{\ve} ^+$ (as a current).  In particular, if $\tau$ is subharmonic then so is $\tau ^+_{\ve}$.  We can then define the logarithmic mean of $\tau _{\ve}^+$ as was done in \cite{v-rs1}:
\[
\tau ^+_{\ve,r} (z) := \int _{D^o_r(0)} \tau^+ _{\ve}(z-\ve-\zeta) \log \frac{r^2}{|\zeta|^2} \omega _o(\zeta).
\]
Observe that, for $\im z > r +\ve$, we have 
\[
\Delta \tau ^+_{\ve,r} (z) = \left ( \int _{D^o_r(0)} \frac{\di ^2 \tau}{\di \zeta \di \bar \zeta} (z-\zeta) \log \frac{r^2}{|\zeta|^2} \omega _o(\zeta)\right )2 \omega _o(z) =\left ( \int _{D^o_r(z)} \frac{\di ^2 \tau}{\di \zeta \di \bar \zeta}(\zeta) \log \frac{r^2}{|z-\zeta|^2} \omega _o (\zeta)\right ) 2\omega _o(z).
\]

Now suppose $\tau$ is $2\pi$-periodic in $\h$, i.e., 
\[
\tau (z+2\pi) = \tau (z), \quad z \in \h.
\]
Then $\tau ^+_{\ve}$ is $2\pi$-periodic in $\C$, and therefore so is $\tau^+_{\ve,r}$.

Finally, if $\psi$ is a locally integrable function in $\D^*$, then $\tilde \psi := P_{\h}^*\psi$ is $2\pi$-periodic in $\h$, and thus $\tilde \psi ^+_{\ve, r}$ is $2\pi$-periodic in $\C$.  Therefore there exists a locally integrable function $\mu _{\ve, r}(\psi)$ on $\C^*$ such that 
\[
P^*\mu _{\ve, r}(\psi) = \tilde \psi ^+_{\ve, r}.
\]
\begin{defn}
The function $\mu _{\ve,r}(\psi)$ is called the $\ve$-extended covered mean of $\psi$.
\red
\end{defn}

\begin{rmk}\label{log-mean-in-half-space}
Note that 
\[
|z| < e^{-(r+\ve)}\ \Rightarrow \ \mu _{\ve,r}(\psi) (z)= \mu _{r}(\psi)(z),
\]
where $\mu _{r}(\Psi)$ is the covered mean of a function $\Psi$ on $\C^*$, as defined in \cite{v-rs1}.  (The definition of $\mu _r$ is local on $\C^*$, so it makes sense to talk about $\mu _r(\Psi)(z)$ for a function $\Psi$ that is only defined in $\D^*$ and not on all of $\C^*$, so long as the point $z \in \D^*$ is sufficiently close to the origin.)
\red
\end{rmk}

\subsubsection{Density of a closed discrete subset supported near the puncture}\label{dstar-density-definitions}

Finally, we introduce the following definition.

\begin{defn}\label{punc-density-defn}
Let $\Gamma \subset \D^*$ be a closed discrete subset supported near the puncture, let $\vp \in L^1_{\ell oc} (\D^*)$ be an upper semi-continuous weight function satisfying 
\[
\Delta \vp \ge 4\omega _P,
\]
and let $\mu _{\ve,r}(\psi)$ be the $\ve$-shifted covered mean of $\psi$.The number 
\[
D ^{*+}_{\vp} (\Gamma)  = \inf \left \{ \alpha \ ; \forall r _o \in (0,\infty)\ \exists \ r > r_o \text{ such that }\alpha \ii \di \dbar \mu _{\ve,r} (\vp + 2 \log \log \tfrac{1}{|z|^2})  \ge  \Upsilon ^{\Gamma}_r \right \}
\]
is called the {\it puncture density} of $\Gamma$.
\red
\end{defn}

If one unwinds the definitions, one sees that the puncture density of $\Gamma$ with respect to $\vp$ is simply the density of $P^{-1}(\Gamma)$  in $\C$, with respect to $(\vp + 2\log \log \frac{1}{|z|^2})^+ _{\ve}$.  Since the density of a sequence supported near the puncture is computed using disks whose center converges to the puncture, the definition of $D^{*+}_{\psi}(\Gamma)$ is independent of the choice of $\ve > 0$.  Thus if the sequence $\Gamma \subset \D^*$ is supported near the puncture, then $D^{*+}_{\psi}(\Gamma)$ is just the cover density of $\Gamma \subset \C^*$, computed with respect to the weight $\psi := \vp + 2 \log \log \frac{1}{|z|^2}$, in the sense of \cite{v-rs1}. 

\subsection{Statement of the interpolation theorem in $(\D^*,\omega_P)$}

\subsubsection{Density and uniform separation of sequences}

We have seen that the geometry of the interpolation problem near the puncture is cylindrical, whereas near the border, it is hyperbolic.  As such, we define density and uniform separation in terms of these geometries.

\begin{defn}\label{unif-sep-density-defn}
For a closed discrete subset $\Gamma \subset \D^*$ and a number $a \in (0,1)$, write 
\[
\Gamma ^* _a := \{ \gamma \in \Gamma\ ;\ |\gamma| \le a \} \quad \text{ and } \quad \Gamma ^b _a :=  \{ \gamma \in \Gamma\ ;\ |\gamma| > a\}.
\]
\begin{enumerate}
\item[(i)] For a weight function $\vp \in \sC ^2(\D^*)$ satisfying the curvature inequalities
\begin{enumerate}
\item[(a)] $\Delta \vp -2 \omega _P\ge 0$, and 
\item[(b)] there exists $c \in (0,1)$ such that, for all $0 < |\zeta| < c$, $\Delta \vp (\zeta) \ge 4 \omega _P(\zeta)$, 
\end{enumerate}
the upper density of $\Gamma$ is the number 
\[
\dot D^+_{\vp}(\Gamma) := \inf _{a \in (0,1)} \max (D^{b+}_{\vp}(\Gamma^b_a), D^{*+}_{\vp}(\Gamma^*_a)),
\]
where $D^{b+}_{\vp}(\Gamma)$ and $D^{*+}_{\vp}(\Gamma)$ are as in Paragraph \ref{bord-density-defn} and Definition \ref{punc-density-defn} respectively.

\item[(ii)] The sequence $\Gamma \subset \D^*$  is said to be uniformly separated if  the non-negative numbers
\[
R^*_{\Gamma} := \frac{1}{2} \inf \{ d_c (\gamma, \mu)\ ;\ \gamma, \mu \in \Gamma ^*_a,\ \gamma \neq \mu\} \quad \text{and} \quad R^b _{\Gamma} := \frac{1}{2} \inf \{ d_P (\gamma, \mu)\ ;\ \gamma, \mu \in \Gamma ^b_a,\ \gamma \neq \mu\}
\]
are positive.    
\red
\end{enumerate}
\end{defn}

\noi Here 
\[
d_c(z,w):= |\log z - \log w| = \sqrt{(\log |z/w|)^2 +(\arg z - \arg w)^2}
\]
is the cylindrical distance, i.e., the geodesic distance for $\omega _c$ in $\C^*$.

\begin{rmk}
Clearly the uniform separation of $\Gamma$ is independent of $a$.  In fact, because the densities are realized asymptotically near the boundary of $\D^*$, the number $\dot D^+_{\vp}(\Gamma)$ is also independent of $a$.
\red
\end{rmk}

Finally, we have the following lemma.

\begin{lem}\label{ot-and-unif-sep-pdisk}
Let $\Gamma \subset \D^*$ be a uniformly separated sequence that is supported near the puncture, and let $T \in \co (\D^*)$ be a function such that ${\rm Ord}(T)=\Gamma$.  Then, with $\tilde \lambda^T_r$ denoting the function defined in Paragraph \ref{p-sing-par}, the sequence $\Gamma$ is uniformly separated if and only if for each $r > 0$ there exists $C_r>0$ such that 
\[
\sup _{\Gamma} |dT|^2e^{-\tilde \lambda ^T_r} \ge C_r.
\]
\end{lem}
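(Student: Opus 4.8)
The plan is to reduce to a flat model on the universal cover, where $\tilde\lambda^T_r$ becomes a solid average of $\log|\tilde T|^2$ and $S^{\Gamma}_r$ can be written down in closed form. (I read the displayed estimate as the uniform lower bound $\inf_{\gamma\in\Gamma}|dT(\gamma)|^2e^{-\tilde\lambda^T_r(\gamma)}\ge C_r$, which is its content.) I would first note that, for $\gamma\in\Gamma$, the quantity $|dT(\gamma)|^2e^{-\tilde\lambda^T_r(\gamma)}$ is nothing but the number $S^{\Gamma}_r(\gamma)$ of Proposition~\ref{lambda-lemma-pdisk}, which by part~(a) of that proposition is independent of the choice of $T$; hence I may replace the given $T$ by a function $T\in\co(\C^*)$ with ${\rm Ord}(T)=\Gamma$ (such a $T$ exists because $\Gamma$, being closed, discrete and supported near the puncture, is closed and discrete in $\C^*$). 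Pulling back by the universal covering $P:\C\to\C^*$, $z\mapsto e^{\ii z}$ — a local isometry onto $(\C^*,\omega_c)$ whose deck group is $z\mapsto z+2\pi$ — and setting $\tilde T:=T\circ P\in\co(\C)$ and $\tilde\Gamma:=P^{-1}(\Gamma)$, one has $P^*\tilde\lambda^T_r=A^{\log|\tilde T|^2}_r$ and, for $\tilde\gamma\in P^{-1}(\gamma)$, $S^{\Gamma}_r(\gamma)=|\tilde T'(\tilde\gamma)|^2e^{-A^{\log|\tilde T|^2}_r(\tilde\gamma)}$; moreover $\Gamma$ is uniformly separated in $d_c$ if and only if $\tilde\Gamma$ has positive Euclidean separation (two distinct points of $\tilde\Gamma$ over the same point of $\Gamma$ differ by a nonzero multiple of $2\pi$, and two over distinct $\gamma\ne\mu$ lie at distance $\ge d_c(\gamma,\mu)$). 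After this reduction one could simply invoke the analogue of the lemma for $(\C^*,\omega_c)$ proved in \cite{v-rs1}; I sketch the direct argument below.

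The heart of the matter will be an explicit product formula. Fixing $r>1$ and $\tilde\gamma\in\tilde\Gamma$, I would list the finitely many points $\tilde\gamma=\tilde\mu_0,\dots,\tilde\mu_N$ of $\tilde\Gamma$ in the open disk $D^o_r(\tilde\gamma)$, factor $\tilde T=\left(\prod_{j=0}^N(\zeta-\tilde\mu_j)\right)H$ on $D^o_r(\tilde\gamma)$ with $H$ holomorphic and zero-free there, and average the resulting identity $\log|\tilde T|^2=\sum_j\log|\zeta-\tilde\mu_j|^2+\log|H|^2$ over the annulus $\A_r(\tilde\gamma)$. Using that $h\mapsto A^h_r(\tilde\gamma)$ is linear and reproduces functions harmonic on $D^o_r(\tilde\gamma)$ (each circle $|\zeta-\tilde\gamma|=\rho$ with $1<\rho<r$ lies in $D^o_r(\tilde\gamma)$, so such a function has circle-average equal to its value at $\tilde\gamma$), the term $\log|H(\tilde\gamma)|^2$ cancels against the matching factor of $|\tilde T'(\tilde\gamma)|^2=\prod_{j\ge1}|\tilde\gamma-\tilde\mu_j|^2\,|H(\tilde\gamma)|^2$, leaving
\[
S^{\Gamma}_r(\gamma)=e^{-\kappa_r}\prod_{\tilde\mu\in\tilde\Gamma\cap D^o_r(\tilde\gamma),\ \tilde\mu\ne\tilde\gamma}\frac{|\tilde\gamma-\tilde\mu|^2}{e^{A^{\log|\cdot-\tilde\mu|^2}_r(\tilde\gamma)}},\qquad \kappa_r:=A^{\log|\cdot|^2}_r(0)=\frac1{c_r}\int_{\A_r(0)}\log|\zeta|^2\log\tfrac{r^2}{|\zeta|^2}\,\omega_o(\zeta)>0.
\]
I would then record two elementary consequences of the identity $\frac1{2\pi}\int_0^{2\pi}\log|\rho e^{\ii\theta}-b|\,d\theta=\log\max(\rho,|b|)$: (i) since $\log|\cdot-\tilde\mu|^2$ is subharmonic, $A^{\log|\cdot-\tilde\mu|^2}_r(\tilde\gamma)\ge\log|\tilde\gamma-\tilde\mu|^2$, so every factor above is $\le1$; and (ii) if $|\tilde\mu-\tilde\gamma|\le1$ then $A^{\log|\cdot-\tilde\mu|^2}_r(\tilde\gamma)=\kappa_r$ exactly.

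Both implications then follow quickly. If $\tilde\Gamma$ has separation $\ge2\varrho>0$, a packing bound gives at most $N_r:=(1+r/\varrho)^2$ points of $\tilde\Gamma$ in $D^o_r(\tilde\gamma)$, and each factor is $\ge(2\varrho)^2/(2r)^2$ (numerator bounded below by separation; denominator bounded above since $\log|\zeta-\tilde\mu|^2\le\log(2r)^2$ on $\A_r(\tilde\gamma)$), hence $S^{\Gamma}_r(\gamma)\ge e^{-\kappa_r}(\varrho/r)^{2N_r}=:C_r>0$ for every $\gamma\in\Gamma$, which is the displayed estimate. Conversely, if $\Gamma$ is not uniformly separated, I would choose distinct $\tilde\gamma_n,\tilde\mu_n\in\tilde\Gamma$ with $d_n:=|\tilde\gamma_n-\tilde\mu_n|\to0$ and fix any $r>1$: for $n$ large $\tilde\mu_n\in D^o_1(\tilde\gamma_n)$, so by (ii) its factor equals $d_n^2/e^{\kappa_r}$ while all the others are $\le1$ by (i), whence $S^{\Gamma}_r(\gamma_n)\le e^{-2\kappa_r}d_n^2\to0$, so $\inf_\Gamma S^{\Gamma}_r=0$ and no $C_r>0$ can hold. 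This gives the equivalence.

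The step I expect to require the most care is the product formula, and within it the bookkeeping of the zeros according to whether they land in the inner hole $D^o_1(\tilde\gamma)$ — where the averaging kernel does not see the logarithmic singularity and the contribution collapses to the universal constant $\kappa_r$, which is fact~(ii) — or in the annular shell $1<|\cdot-\tilde\gamma|<r$, where the contribution is a genuine but uniformly bounded quantity. The remaining ingredients are routine: the reproducing/domination properties of $h\mapsto A^h_r(\tilde\gamma)$ on harmonic and subharmonic functions (facts (i)–(ii), from the circle-average identity), and the harmless observation that a zero of $\tilde T$ on $\partial D^o_r(\tilde\gamma)$ is excluded from the product, does not affect $H$ on the open disk, and is negligible for the averaging integral.
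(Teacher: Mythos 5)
Your argument is correct and it coincides with the paper's, which is just a citation: the proof the author has in mind is that of Proposition~2.7 of \cite{v-rs1}, carried out by exactly this reduction to the flat model through the covering $P:\C\to\C^*$ followed by the product-formula analysis of $S^\Gamma_r(\gamma)=|\tilde T'(\tilde\gamma)|^2e^{-A^{\log|\tilde T|^2}_r(\tilde\gamma)}$ on the universal cover. You are also right to read the displayed condition as $\inf_{\gamma\in\Gamma}|dT(\gamma)|^2e^{-\check\lambda^T_r(\gamma)}\ge C_r$ (the ``$\sup$'' and the ``$\tilde\lambda$'' for ``$\check\lambda$'' in the statement are slips, as is the premise ``uniformly separated'' where ``closed discrete'' is meant), since this lower bound is precisely what is invoked when Lemma~\ref{ot-and-unif-sep-pdisk} is used in the proof of Theorem~\ref{pdisk-strong-suff-p}.
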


The proof of Lemma \ref{ot-and-unif-sep-pdisk} is the same as that of Proposition 2.7 in \cite{v-rs1}.

\subsubsection{Statement of the interpolation theorem}

\begin{d-thm}\label{pdisk-char}
Let $\vp \in \sC ^2 (\D^*)$ be a weight function satisfying the following conditions: there exists a constant $c \in(0,1)$, and positive constants $m$ and $M$, such that 
\begin{enumerate}
\item[(B)] for all $c \le |\zeta| <1$, $m \omega _P(\zeta) \le \Delta \vp(\zeta) -2 \omega _P(\zeta) \le M \omega _P(\zeta)$, and 
\item[($\star$)] for all $0 < |\zeta| < c$, $m \omega _c(\zeta) \le \Delta \vp (\zeta) -  4 \omega _P(\zeta) \le M \omega _c(\zeta)$.
\end{enumerate}
Let $\Gamma \subset \D^*$ be a closed discrete subset.  Then the restriction map 
\[
\sr _{\Gamma} : \sh ^2 (\D^*, e^{-\vp} \omega _P) \to \ell ^2 (\Gamma , e^{-\vp})
\]
is surjective if  
\begin{enumerate}
\item[(i+)] $\Gamma$ is uniformly separated, and 
\item[(ii+)] $\dot D^+_{\vp}(\Gamma) < 1$.
\end{enumerate}
Conversely, if $\sr _{\Gamma}$ is surjective then 
\begin{enumerate}
\item[(i-)] $\Gamma$ is uniformly separated, and 
\item[(ii-)] $\dot D^{+}_{\vp}(\Gamma) \le 1$.  More precisely, $D^{b+}_{\vp}(\Gamma) < 1$ and $D^{*+}_{\vp}(\Gamma) \le 1$.
\end{enumerate}
\end{d-thm}

\subsection{Sufficiency}

As in the case of the unit disk, we begin with the proof that Conditions (i+) and (ii+) of Theorem \ref{pdisk-char} are sufficient to guarantee the surjectivity of the restriction map 
\[
\sr _{\Gamma}:\sh ^2(\D^*, e^{-\vp} \omega _P) \to \ell ^2 (\Gamma, e^{-\vp}).
\]

\begin{d-thm}\label{pdisk-suff}
Let $\vp \in \sC ^2 (\D^*)$ satisfy conditions {\rm (B)} and $(\star)$.  Let $\Gamma \subset \D^*$ be uniformly separated, and assume $\dot D^+_{\vp} (\Gamma)  < 1$.  Then the restriction map $\sr _{\Gamma} : \sh ^2 (\D^*, e^{-\vp} \omega _P) \to \ell ^2 (\Gamma , e^{-\vp})$ is surjective.
\end{d-thm}

\subsubsection{Strong sufficiency} 
In fact, we are going to prove a somewhat stronger result.

\begin{d-thm}\label{pdisk-ssuff}
Let $\vp$ be a subharmonic weight function on $\D^*$ satisfying 
\[
\Delta \vp -2 \omega _P\ge m \omega _P, \quad \text{and} \quad \Delta \vp (\zeta) - 4\omega _P(\zeta) \ge m \omega _c(\zeta)
\]
for some positive constant $m$, with the second inequality holding for $0 < |\zeta| < c$ for some $c\in (0,1)$.  Let $\Gamma \subset \D^*$ be uniformly separated, and write $\Gamma = \Gamma _1\cup \Gamma_2$, where $|\gamma| > c$ for $\gamma \in \Gamma _1$ and $|\gamma |< c$ for $\gamma \in \Gamma _2$.  Suppose there exist $\ve >0$ and $r \in (0,1)$ such that 
\[
\Delta \vp - 2\omega _P \ge (1+\ve) \Upsilon ^{b,\Gamma_1}_r \quad \text{in } \D^*,
\]
and 
\[
\Delta \vp - 4\omega _P \ge (1+\ve) \Upsilon ^{*,\Gamma_2}_r \quad \text{on the set }\{z \in \C\ ; \ 0 < |z| < c\}.
\]
Then the restriction map $\sr _{\Gamma} : \sh ^2 (\D^*, e^{-\vp} \omega _P) \to \ell ^2 (\Gamma , e^{-\vp})$ is surjective.
\end{d-thm}

Theorem \ref{pdisk-ssuff} is proved in three parts.  In the first part, one solves the interpolation problem for sequences supported near the border.  In the second part, one interpolates the data supported near the puncture with a function that is holomorphic across the puncture, i.e., a positive distance away from the Border.  This second step follows from the work in \cite{v-rs1}.  Finally these two interpolation functions are glued together to complete the proof.

\begin{d-thm}[Strong sufficiency: border case]\label{pdisk-strong-suff-b}
Let $\vp \in L ^1_{\ell oc} (\D^*)$ be a subharmonic weight function satisfying 
\[
\Delta \vp  -2 \omega _P \ge m \omega _P
\]
for some positive constant $m$.  Let $\Gamma \subset \D^*$ be uniformly separated and supported near the border, and assume 
\[
\Delta \vp - 2\omega _P \ge (1+\ve) \Upsilon ^{b,\Gamma}_r
\]
for some positive number $\ve$.  Then the restriction map $\sr _{\Gamma} : \sh ^2 (\D^*, e^{-\vp} \omega _P) \to \ell ^2 (\Gamma , e^{-\vp})$ is surjective.
\end{d-thm}

\begin{proof}
Since $\Gamma$ is supported near the border, there exists a constant $\delta > 0$ such that the pseudohyperbolic disks $\{ D_{\delta}(\gamma)\ ;\ \gamma \in \Gamma\}$ are pairwise-disjoint and contractible.  By Lemma \ref{loc-extn-disk} there exist holomorphic functions $\{ g_{\gamma} \in \co (D_{\delta}(\gamma)\ ;\ \gamma \in \Gamma\}$ such that 
\[
g_{\gamma}(\gamma) = e^{\vp(\gamma)/2} \quad \text{and} \quad \int _{D_{\delta}(\gamma)}|g_{\gamma}|^2e^{-\vp} \omega _P \le C_{\delta}, \qquad \gamma \in \Gamma,
\]
where $C_{\delta}$ is a universal constant, and in particular,  independent of $\gamma$.  (Note that Lemma \ref{loc-extn-disk} is formulated in the unit disk, but since the sequence $\Gamma$ is supported near the border, the same proof works for the punctured disk.)  

Fix a datum $f \in \ell ^2(\Gamma, e^{-\vp})$ to be extended, i.e., 
\[
\sum _{\gamma \in \Gamma} |f(\gamma)|^2e^{-\vp(\gamma)} < +\infty.
\]
Let $\chi \in \sC ^{\infty}(\R)$ be a decreasing function satisfying
\[
\chi(x) = 1 \text{ for } x \le \tfrac{1}{2}, \ \chi (x) = 0 \text{ for }x\ge 1, \text{ and } |\chi '(x)| \le 3 \text{ for all }x.
\]
Consider the function 
\[
\tilde F(\zeta) := \sum _{\gamma \in \Gamma} f(\gamma)e^{-\vp(\gamma)/2} g_{\gamma}(\zeta) \chi \left ( \delta ^{-2} |\phi _{\gamma}(\zeta)|^2\right ).
\]
Then 
\[
\tilde F \in \sC ^{\infty} (\D^*) \quad \text{and} \quad \tilde F|_{\Gamma} = f.
\]
By Lemma \ref{loc-extn-disk}, 
\[
\int _{\D^*} |\tilde F|^2e^{-\vp} \omega _P \le \sum _{\gamma \in \Gamma} |f(\gamma)|^2 e^{-\vp(\gamma)}\int _{D_{\delta}(\gamma)} |g_{\gamma}|^2e^{-\vp} \omega _P \le C_{\delta} ||f||^2,
\]
so that $\tilde F \in L^2(\D^*, e^{-\vp} \omega _P)$.  We now correct $\tilde F$ to be holomorphic and still interpolate $f$. Thus we seek a solution $u$ of the equation 
\[
\dbar u = \alpha := \dbar \tilde F
\]
that lies in $L^2(\D, e^{-\vp}\omega _P)$ and vanishes along $\Gamma$.  To this end, consider the weight function 
\[
\psi  = \vp + \log \sigma _r ^{b,\Gamma}.
\]
Note that $\chi '(x) = 0$ for $|x|\le 1/2$ and that 
\[
|d \phi _z|^2_{\omega _P} = (1-|\phi _z|^2)^2\le 1.
\]
We compute that
\[
\alpha(\zeta) = \sum _{\gamma \in \Gamma} f(\gamma)e^{-\vp(\gamma)/2} g_{\gamma}(\zeta) \chi '\left ( \delta^{-2} |\phi _{\gamma}(\zeta)|^2\right )\frac{\phi _z (\zeta) \overline{\phi _z'(\zeta)}}{\delta^2}d\bar \zeta.
\]
Thus by Property (c) of Proposition \ref{lambda-lemma-disk},
\begin{eqnarray*}
\int _{\D^*} |\alpha|^2_{\omega _P} e^{-\psi} \omega _P &=& \sum _{\gamma \in \Gamma} |f(\gamma)|^2e^{-\vp(\gamma)}\int _{D_{\delta}(\gamma)} |g_{\gamma}(\zeta)|^2 e^{-\psi(\zeta)} \left | \chi '\left ( \delta^{-2} |\phi _{\gamma}(\zeta)|^2\right )\right |^2 \frac{|\phi _z (\zeta)|^2 |d\phi _z(\zeta)|^2_{\omega_P}}{\delta^4}\omega _P\\
&\lesssim &  \sum _{\gamma \in \Gamma} |f(\gamma)|^2e^{-\vp(\gamma)}\int _{D_{\delta}(\gamma)} |g_{\gamma}(\zeta)|^2 e^{-\vp(\zeta)} \omega _P < +\infty.
\end{eqnarray*}
Since 
\[
\Delta \psi - 2 \omega _P \ge \frac{1}{1+\ve}\left ( \Delta \vp - 2\omega _P - (1+\ve)\Upsilon ^{\Gamma} _r\right ) + \frac{\ve}{1+\ve} \left ( \Delta \vp -2\omega _P \right ) \ge \frac{m\ve}{1+\ve} \omega _P,
\]
Theorem \ref{punctured-disk-odf-thm} gives us a function $u$ such that 
\[
\dbar u = \alpha \quad \text{and} \quad \int _{\D^*} |u|^2e^{-\psi} \omega _P < +\infty.
\]
By the smoothness of $\alpha$ and the interior ellipticity of $\dbar$, $u$ is smooth.  By Property (c) of Proposition \ref{lambda-lemma-disk}, in particular the non-integrability of $e^{-\psi}$ along $\Gamma$, $u|_{\Gamma} \equiv 0$.  Furthermore, by (b) of Proposition \ref{lambda-lemma-disk}, 
\[
\int_{\D^*} |u|^2 e^{-\vp} \omega _P \le \int _{\D^*}|u|^2e^{-\psi} \omega _P < +\infty.
\]
Finally, set $F = \tilde F- u$.  Then $F \in \co (\D^*)$, $F|_{\Gamma} = f$, and
\[
\int _{\D^*}|F|^2e^{-\vp} \omega _P \le 2 \left ( \int _{\D^*}|\tilde F|^2e^{-\vp} \omega _P + \int _{\D^*}|u|^2e^{-\vp} \omega _P \right ) < +\infty.
\]
This completes the proof of Theorem \ref{pdisk-strong-suff-b}.
\end{proof}

\begin{d-thm}[Strong sufficiency: puncture case]\label{pdisk-strong-suff-p}
Let $\vp \in L ^1_{\ell oc} (\D^*)$ be a subharmonic weight function.  Let $\Gamma \subset \D^*$ be uniformly separated and supported near the puncture, and assume 
\[
\Delta \vp - 4\omega _P \ge (1+\ve) \Upsilon ^{*\Gamma}_r
\]
for some positive number $\ve$.  Fix $c \in (0,1)$ such that $\Gamma \subset \{ 0 < |z| < c\}$.  Then for each $f :\Gamma \to \C$ satisfying 
\[
\sum _{\gamma \in \Gamma} |f(\gamma)|^2\frac{e^{-\vp(\gamma)}}{(\log \frac{1}{|\gamma|^2})^2} <+\infty
\]
there exists $F \in \co (\{ 0 < |z| < \frac{c+1}{2} \})$ such that 
\[
F|_{\Gamma}= f \quad \text{and} \quad \int_{\{ 0 < |z| < \frac{c+1}{2} \}} |F|^2e^{-\vp} \omega _P < +\infty.
\]
\end{d-thm}

\begin{proof}
Let $f \in \ell ^2 (\Gamma , e^{-\vp})$ be the datum to be extended. By Lemma \ref{ot-and-unif-sep-pdisk}, 
\[
\sum _{\gamma \in \Gamma} |f(\gamma)|^2\frac{e^{-\vp(\gamma)}}{(\log \frac{1}{|\gamma|^2})^2} \frac{1}{|dT(\gamma)|^2e^{-\tilde \lambda ^T_r(\gamma)}} < +\infty.
\]
We are going to use the $L^2$ Extension Theorem \ref{ot-basic}.  In the notation of that theorem, we choose the data $X = \{ z \in \C\ ;\ 0 < |z| < \frac{c+1}{2}\}$, $\psi = \vp +2 \log \log \frac{1}{|z|^2}$, $\lambda = \tilde \lambda ^T_r$, and $\omega = \omega _c$, the restriction to $X$ of the cylindrical metric in $\C^*$.  Thus by $L^2$ Extension Theorem  there exists $F \in \co (X)$ such that 
\[
F|_{\Gamma}= f \quad \text{and} \quad \int _{X} |F|^2e^{-\vp} \omega _P < +\infty.
\]
This completes the proof.
\end{proof}

\begin{proof}[Proof of Theorem \ref{pdisk-ssuff}]

Let $f \in \ell ^2(\Gamma, e^{-\vp})$ be the datum to be extended.  Choose $\beta >0$ such that, with $W_{r,\delta} := \{\zeta \in \C\ ;\ r-\delta \le |\zeta|\le r+\delta \}$, 
\[
\Gamma \cap W_{c, \beta} = \emptyset.
\]
We set 
\[
\Gamma _b := \{ \gamma \in \Gamma\ ;\ |\gamma| > c\} \quad \text{and} \quad \Gamma _* := \{ \gamma \in \Gamma\ ;\ |\gamma| < c\},
\]
which are supported near the border and puncture respectively.  Associated to these sequences, we have functions $\sigma ^{b,\Gamma _b}_r$ and $\sigma ^{*, \Gamma _*}_r$, and $(1,1)$-forms 

Fix $\chi \in \sC^{\infty}_o(W_{c,\beta})$ such that $0 \le \chi \le 1$, $\chi |_{W_{c,\beta/2}} \equiv 1$, and $|\chi '| \le \frac{3}{\beta}$.  We define the function 
\[
\eta ^{\Gamma} := \left \{ 
\begin{array}{l@{\quad}l}
(1-\chi(\zeta))  \log \sigma ^{*,\Gamma _*}_r (\zeta) +C (|\zeta|^2-1)& |\zeta|\le c\\
(1-\chi(\zeta)) \log \sigma ^{b,\Gamma _b}_r (\zeta) +C (|\zeta|^2-1)& |\zeta|\ge c
\end{array}
\right . .
\]
Then for $C$ sufficiently large, 
\begin{enumerate}
\item[(i)] $e^{-\eta ^{\Gamma}}$ is not locally integrable at any point of $\Gamma$, but is smooth everywhere else, 
\item[(ii)] $\eta ^{\Gamma} \le 0$ on $\D^*$, and 
\item[(ii)] there is a continuous, positive $(1,1)$-form $\theta$ on $\C$ such that $\theta \le C\omega _o$ and 
\[
\Delta \eta ^{\Gamma} =  \theta - \mathbf{1}_{\{|\cdot |<c\}}\left (\Upsilon ^{*,\Gamma_*}_r \right )  - \mathbf{1}_{\{|\cdot |>c\}}\Upsilon ^{b,\Gamma_b}_r.
\]
\end{enumerate}

Now, by Theorems \ref{pdisk-strong-suff-b} and \ref{pdisk-strong-suff-p}, there exist 
\[
F^b \in \sh ^2(\D^*, e^{-\vp}\omega _P) \quad\text{ and }\quad F^* \in \sh ^2(\{ 0 < |z|< c\}, e^{-\vp}\omega _P)
\]
such that 
\[
F^b|_{\Gamma ^b} = f|_{\Gamma ^b} \quad \text{and} \quad F^*|_{\Gamma ^*} = f|_{\Gamma ^*}.
\]
Let 
\[
\tilde F := \mathbf{1}_{\{|\cdot |< c\}} (1-\chi)F^* + \mathbf{1}_{\{|\cdot |> c \}}(1-\chi)F^b.
\]
Then $\tilde F \in L^2(\D^*, e^{-\vp}\omega _P)$ is smooth, holomorphic in $\D^* - W_{c,\beta}$, and satisfies 
\[
\tilde F|_{\Gamma} = f.
\]
The $(0,1)$-form $\alpha := \dbar \tilde F$ is smooth, and supported in $W_{c,\beta}$.  Thus, since $F^*$ and $F^b$ are square integrable on their domains, 
\[
\int _{\D^*} |\alpha|^2_{\omega_P} e^{-\vp-\eta ^{\Gamma}}\omega _P < +\infty.
\]
Now, 
\begin{eqnarray*}
\Delta (\vp +\eta ^{\Gamma}) -2\omega _P &=&  \frac{1}{1+\ve}\left ( \Delta \vp - 2\omega _P - \left (  \mathbf{1}_{\{|\cdot |<c\}}\left (2\omega _P+ \Upsilon ^{*,\Gamma_*}_r  \right )+  \mathbf{1}_{\{|\cdot |>c\}}\Upsilon ^{b,\Gamma_b}_r  \right ) \right ) \\
&& \qquad + \frac{2}{1+\ve} \cdot \mathbf{1}_{\{|\cdot |<c\}}\omega _P + \theta + \frac{\ve}{1+\ve}(\Delta \vp - 2\omega _P)\\
&\ge & \frac{m\ve}{1+\ve} \omega _P.
\end{eqnarray*}
By Theorem \ref{punctured-disk-odf-thm} and the interior elliptic regularity of $\dbar$ there exists $u \in L^1_{\ell oc} (\D^*, e^{-\vp-\eta ^{\Gamma}}\omega _P) \cap \sC ^{\infty}(\D^*)$ such that 
\[
\dbar u = \alpha \quad \text{and} \quad \int _{\D^*} |u|^2 e^{-\vp} \omega _P \le \int _{\D^*} |u|^2 e^{-\vp-\eta ^{\Gamma}} \omega _P < +\infty.
\]
It follows that $u|_{\Gamma}\equiv 0$, and that, with $F := \tilde F- u$, $F|_{\Gamma} = f$.  Finally, 
\[
\int _{\D^*} |F|^2e^{-\vp} \omega _P \le 2 \int _{\D^*} |\tilde F|^2e^{-\vp}\omega _P + \int _{\D^*} |u|^2e^{-\vp} \omega _P < +\infty.
\]
The proof of Theorem \ref{pdisk-ssuff} is complete.
\end{proof}

\begin{proof}[Proof of Theorem \ref{pdisk-suff}]
The conditions on the weight $\vp$ allows us to apply the construction of Paragraph \ref{border-means-par} to the weight $\vp$ to obtain a weight that has the same asymptotic growth as $\vp$ near the border of $\D^*$.

Near the puncture, we apply the construction of Paragraph \ref{puncture-means-par} to the weight $\vp$ to obtain a weight that has the same asymptotic growth as $\vp$ near the puncture.  (This is the case because $\omega _P$ is locally finite near the origin.)  

We therefore have two regularized weights, $\vp_1$ and $\vp_2$, with the same asymptotics as $\vp$ near the border and puncture respectively.  Now take a function $\chi \in \sC ^{\infty}(\D)$ such that $0 \le \chi \le 1$, $\chi (z) \equiv 1$ for $|z| \le c$, and $\chi(z) \equiv 0$ for $|z| \ge \frac{c+1}{2}$.  Consider the weight function 
\[
\psi = (1-\chi) \vp _1 + \chi \vp _2.
\]
Then $\sh ^2 (\D ^*, e^{-\vp}\omega _P)$ and $\sh ^2 (\D ^*, e^{-\psi}\omega _P)$ are quasi-isomorphic Hilbert spaces, as are $\ell ^2 (\D^*, e^{-\vp})$ and $\ell ^2 (\D^*, e^{-\psi})$. 

Now, the dentsity conditions for $\vp$ imply that $\psi$ satisfies the hypotheses of Theorem \ref{pdisk-ssuff}, and thus  the restriction map 
\[
\sr _{\Gamma} : \sh ^2 (\D ^*, e^{-\psi}\omega _P) \to \ell ^2 (\D^*, e^{-\psi})
\]
is surjective.  Since the identity maps 
\[
\sh ^2 (\D ^*, e^{-\vp}\omega _P) \to \sh ^2 (\D ^*, e^{-\psi}\omega _P) \quad \text{and} \quad \ell ^2 (\D^*, e^{-\vp}) \to \ell ^2 (\D^*, e^{-\psi})
\]
are bounded linear isomorphisms, the restriction map 
\[
\sr _{\Gamma} : \sh ^2 (\D ^*, e^{-\vp}\omega _P) \to  \ell ^2 (\D^*, e^{-\vp})
\]
is also surjective.  This completes the proof of Theorem \ref{pdisk-suff}
\end{proof}

\subsection{Necessity}

We shall now state and prove the converse of Theorem \ref{pdisk-suff}.  

\begin{d-thm}\label{pdisk-nec}
Let $\vp \in \sC ^2 (\D^*)$ satisfy conditions {\rm (B)} and $(\star)$.  Let $\Gamma \subset \D^*$ be a closed discrete subset, and assume that the restriction map $\sr _{\Gamma} : \sh ^2 (\D^*, e^{-\vp} \omega _P) \to \ell ^2 (\Gamma , e^{-\vp})$ is surjective.  Then 
\begin{enumerate}
\item[(i)] $\Gamma$ is uniformly separated, and 
\item[(ii)] $D^{b+}_{\vp} (\Gamma)  < 1$ and $D^{*+}_{\vp} (\Gamma)  \le 1$. 
\end{enumerate}
\end{d-thm}

To prove Theorem \ref{pdisk-nec}, we split $\Gamma$ as a disjoint union of two sequences $\Gamma ^*$ and $\Gamma ^b$, defined by 
\[
\Gamma ^* = \left \{ \gamma \in \Gamma \ ;\ |\gamma|< \tfrac{1}{2} \right \} \quad \text{and} \quad \Gamma ^b = \left \{ \gamma \in \Gamma \ ;\ |\gamma|\ge \tfrac{1}{2} \right \}.
\]
It is clear that if $\Gamma$ is an interpolation sequence then so are $\Gamma ^*$ and $\Gamma ^b$.  The sequence $\Gamma ^*$ behaves a lot like an interpolation sequence for $(\C^*, \omega _c, \vp +2\log \log |z|^{-2})$, while the sequence $\Gamma ^b$ behaves a lot like an interpolation sequence for $(\D, \omega _P, \vp)$.  This will be our guiding principle as we proceed.

\subsubsection{Interpolation constant}\label{pdisk-interp-const-par}

As in Paragraph \ref{disk-interp-const-par}, when $\sr _{\Gamma} :\sh ^2 (\D^*, e^{-\vp}\omega _P) \to \ell ^2 (\Gamma, e^{-\vp})$ is surjective, the minimal extension operator $\se _{\Gamma} : \ell ^2 (\Gamma, e^{-\vp}) \to {\rm Kernel}(\sr _{\Gamma})^{\perp} \subset \sh ^2 (\D^*, e^{-\vp}\omega _P)$ is continuous, and has minimal norm among all bounded extension operators.  The norm 
\[
\sa _{\Gamma} := ||\se _{\Gamma}||
\]
is again called the interpolation constant of $\Gamma$.

\subsubsection{Uniform separation}

Let $\Gamma$ be an interpolation sequence. We aim to show that $\Gamma$ is uniformly separated in the sense of Definition \ref{unif-sep-density-defn}(ii).

Fix $\gamma \in \Gamma$.  As usual, we begin by choosing $F \in \sh ^2 (\D^*, e^{-\vp}\omega _P)$ such that 
\[
F(\mu) = e^{\vp(\gamma)/2}\delta _{\gamma \mu}, \ \ \mu \in \Gamma, \qquad \text{and} \quad ||F|| \le \sa _{\Gamma}.
\]
Now the proof breaks up into two cases, depending on whether $\gamma \in \Gamma ^b$ or $\gamma \in \Gamma ^*$.  
\begin{enumerate}
\item[(i)]In the first case, we can find a disk of center $\gamma$ and pseudohyperbolic radius $\delta$ (viewed as a subset of the disk), that lies in the set $|z|> 1/4$, with $\delta$ independent of $\gamma \in \Gamma ^b$.  Note that the metrics $\omega _P^{\D^*}$ and $\omega _P ^{\D}$ are quasi-isometric in the region $\{\ 1/4\le |z| < 1\}$.   An application of Proposition \ref{bergman}(b), as in the Paragraph \ref{disk-unif-sep-nec}, shows that for all $\mu \in \Gamma$, $|\phi _{\mu}(\gamma)| \ge c_o > 0$.

\item[(ii)]In the second case, where $\gamma \in \Gamma ^*$, we will imitate the proof of uniform separation for $\C^*$ carried out in \cite{v-rs1}.  To this end, we again let $\psi(z) = \vp(z) + 2 \log \log \frac{1}{|z|^2}$.  Recall that our covering map 
\[
P :\h \ni \zeta \mapsto e^{\ii \zeta}\in \D^*
\]
is the restriction of the standard covering map $P : \C \to \C^*$, and also satisfies 
\[
P^*\omega _c = \omega _o.
\]
We fix $\delta > 0$ sufficiently small that for each $\mu \in \Gamma ^*$ the disk $D^c_{\delta}(\mu):= \{ z \in \C^*\ ;\ d_c(\mu , z) < \delta\}$ 
\begin{enumerate}
\item[(a)] lies in $\D^*$, and 
\item[(b)] is the biholomorphic image under $P$ of a Euclidean disk $D^o_{\delta}(z) \subset \h$.  
\end{enumerate}Using Proposition 1.5(b) in \cite{v-rs1} (which is the Euclidean analogue of Proposition \ref{bergman}(b) above) we deduce that, in the universal cover $\h$ of $\D^*$, the preimage of any point of $\Gamma ^*-\{\gamma\}$ under $P$ is a uniform positive distance away from $\gamma$.  It follows that $d_c(\gamma, \mu) \ge c_o > 0$ for some positive constant $c_o$ independent of $\gamma$.  
\end{enumerate}
Thus $\Gamma$ is uniformly separated.
\qed

\subsubsection{Density estimates}

Now that we have established uniform separation, we move on to estimate the density of interpolation sequences.

In the rest of this paragraph, we fix a weight function $\vp \in \sC ^2 (\D^*)$ satisfying the conditions ${\rm (\star)}$ and $(B)$ of Theorem \ref{pdisk-char}, and let $\Gamma \subset \D^*$ be a closed discrete subset.  We also set 
\[
\psi = \vp + 2 \log \log \frac{1}{|\cdot|^2}.
\]
We begin with the obvious observation that if the restriction map 
\[
\sr _{\Gamma} :\sh ^2 (\D^*,e^{-\vp} \omega _P) \to \ell ^2 (\Gamma , e^{-\vp})
\]
is surjective then the restriction maps 
\[
\sr^{*} _{\Gamma^b} :\sh ^2 (\D^*,e^{-\vp} \omega _P) \to \ell ^2 (\Gamma^b , e^{-\vp}) \quad \text{and}\quad \sr^{*} _{\Gamma^*} :\sh ^2 (\D^*,e^{-\psi} \omega _c) \to \ell ^2_c (\Gamma^*, e^{-\psi})
\]
are surjective.  While we could try to adapt our arguments in the disk and the cylinder to the present setting, we prefer to use the density estimates of those cases directly.  To do so, we establish the surjectivity of the restriction map on closely related spaces.

\begin{d-thm}\label{loc-interp-glob-interp-disk}
Let $\Gamma \subset \D^*$ be uniformly separated and supported near the border.  Choose a radial function $\chi \in \sC ^{\infty}(\D)$ with $0 \le \chi \le 1$, $\chi(z) = 0$ for $|z| < c/2$ and $\chi(z) = 1$ for $|z| \ge c$, where $c\in (0,1)$ is such that $\gamma \in \Gamma \Rightarrow |\gamma| \ge c$.  Let 
\[
\hat \vp (z) = \chi(z) \vp (z) + C |z|^2,
\]
with $C$ chosen so large that $\Delta \hat \vp \ge (m+2)\omega _P$ for some positive number $m$. If the restriction map
\[
\sr_{\Gamma} :\sh ^2 (\D^*,e^{-\vp} \omega _P) \to \ell ^2 (\Gamma , e^{-\vp})
\]
is surjective, then the restriction map 
\[
\sr _{\Gamma} :\sh ^2 (\D,e^{-\hat \vp} \omega _P) \to \ell ^2 (\Gamma, e^{-\vp})
\]
is surjective.  
\end{d-thm}

\begin{proof}
Let $f \in \ell ^2 (\Gamma , e^{-\vp})$ with $||f||=1$.  By hypothesis, there exists $F \in \sh ^2 (\D^*, e^{-\vp}\omega _P)$ such that $F|_{\Gamma}= f$, and in fact, we can take $||F|| \le \sa _{\Gamma}$.  

Fix a decreasing function $h : (-\infty,\infty) \to [0,1]$ such that $h(x) = 1$ for $x < 0$, $h(x)= 0$ for $x > 2+2R$ and $|h'(x)| \le 1/2R$. Let 
\[
\xi (z) = h \left ( \log  \log \frac{1}{|z|^2} - \log \log \frac{1}{c^2}\right ).
\]
Then 
\[
\xi |_{\Gamma} \equiv 1\quad \text{and} \quad \left | \dbar \xi \right |^2_{\omega _P} = 4 \left ( h'\left (\log  \log \frac{1}{|z|^2} - \log \log \frac{1}{c^2}\right ) \right )^2 \le R^{-2}.
\]
Now, the function $\xi F$, although not globally holomorphic, interpolates $f$, and is holomorphic on the set $|z| \ge c$, but only smooth on $\D$.  Observe that  $\alpha := \dbar \xi F$ satisfies 
\[
\int _{\D} |\alpha |^2_{\omega _P} e^{-\hat \vp} \omega _P \le R^{-2} \int _{\D^*} |F|^2 e^{-\vp}\omega _P.
\]
Since $\Delta \hat \vp \ge (m+2) \omega _P$, Theorem \ref{disk-odf-thm} gives us a function $u \in L^2 (\D, e^{-\hat \vp} \omega _P)$ such that 
\[
\dbar u = \alpha\quad and \quad \int _{\D} |u|^2e^{-\hat \vp}\omega _P \le \frac{\sa _{\Gamma}^2}{R^2m}.
\]
Since $u$ is holomorphic on $|z| \ge c$, it must be small on $\Gamma$ when $R$ is large.  If we fix $R$ sufficiently large, then we obtain a function $F_1 := \xi F - u \in \sh ^2(\D , e^{-\hat \vp} \omega _P)$  such that 
\[
\sum _{\gamma \in \Gamma} |F_1(\gamma) - f(\gamma)|^2e^{-\vp (\gamma)}  \le \frac{1}{2}||f||^2 = \frac{1}{2}.
\]
(Here one uses Corollary \ref{Bergman-sums-disk}(a) and uniform separation.)

Let $f_0=f$ and $f_1 := F_1|_{\Gamma} - f$.  We continue the procedure inductively.  Assuming we have found a function $F_j \in \sh ^2(\D, e^{-\hat \vp}\omega _P)$ such that 
\[
||F_j||^2 \le \frac{C}{2^{j-1}} \quad \text{and} \quad \sum _{\gamma \in \Gamma} |F_j(\gamma) - f_{j-1}(\gamma)|^2e^{-\vp (\gamma)}  \le  \frac{1}{2^j},
\]
let $f_j := f_{j-1} - F_j|_{\Gamma}$.  Repeating the above procedure, we find $F_{j+1}$ such that 
\[
||F_{j+1}||^2 \le \frac{C}{2^j} \quad \text{and} \quad \sum _{\gamma \in \Gamma} |F_{j+1}(\gamma) - f_{j}(\gamma)|^2e^{-\vp (\gamma)}  \le  \frac{1}{2^{j+1}}.
\]
Letting 
\[
F := \sum_j F_j,
\]
we see that $F$ converges in $L^2(\D, e^{-\hat \vp}\omega_P)$, and thus locally uniformly, hence in $\sh ^2(\D, e^{-\hat \vp}\omega_P)$.  Moreover,  
\[
F|_{\Gamma}= f.
\]
The proof is finished.
\end{proof}

\begin{d-thm}\label{loc-interp-glob-interp-cyl}
Let $\Gamma \subset \D^*$ be uniformly separated and supported near the puncture.  Choose a radial function $\chi \in \sC ^{\infty}(\C)$ with $0 \le \chi \le 1$, $\chi(z) = 1$ for $|z| \le c$ and $\chi(z) = 0$ for $|z| \ge (1+c)/2$, where $c\in (0,1)$ is such that $\gamma \in \Gamma \Rightarrow |\gamma| < c$.  Let 
\[
\hat \psi(z)  = \chi(z) \psi (z) + C (1-\chi (z))(\log \tfrac{1}{|z|^2})^2,
\]
with $C>0$ chosen so large that $\Delta \hat \psi  \ge m \omega _c$ in $\C^*$.  If the restriction map 
\[
\sr_{\Gamma} :\sh ^2 (\D^*,e^{-\vp} \omega _P) \to \ell ^2 (\Gamma , e^{-\vp})
\]
is surjective, then the restriction map 
\[
\sr _{\Gamma} :\sh ^2 (\C^*,e^{-\hat \psi} \omega _c) \to \ell ^2 (\Gamma, e^{-\psi})
\]
is surjective.
\end{d-thm}

\begin{proof}
Let $f \in \ell ^2 (\Gamma , e^{-\vp})$ with $||f||=1$.  As in the proof of Theorem \ref{loc-interp-glob-interp-disk}, there exists $F \in \sh ^2 (\D^*, e^{-\vp}\omega _P)$ such that $F|_{\Gamma}= f$ and $||F|| \le \sa _{\Gamma}$.  

Fix a decreasing function $h : (-\infty,\infty) \to [0,1]$ such that $h(x) = 1$ for $x < 0$, $h(x)= 0$ for $x > 2+2R$ and $|h'(x)| \le 1/2R$. Let 
\[
\xi (z) = h \left ( \log \frac{1+|z|}{1-|z|} - \log \frac{1+c}{1-c}\right ).
\]
Then 
\[
\xi |_{\Gamma} \equiv 1\quad \text{and} \quad \left | \dbar \xi \right |^2_{\omega _P} = 4 \left ( h'\left ( \log \frac{1+|z|}{1-|z|} - \log \frac{1+c}{1-c} \right ) \right )^2 \le R^{-2}.
\]
The function $\xi F$, although not globally holomorphic, interpolates $f$, and is holomorphic on the set $|z| \le c$.  The $(0,1)$-form  $\alpha := \dbar \xi F$ satisfies 
\[
\int _{\C^*} |\alpha |^2_{\omega _P} e^{-\hat \psi} \omega _c \le R^{-2} \int _{\D^*} |F|^2 e^{-\vp}\omega _P.
\]
Since $\Delta \hat \psi \ge m \omega _c$, H\"ormander's Theorem  gives us a function $u \in L^2 (\C^*, e^{-\hat \psi} \omega _c)$ such that 
\[
\dbar u = \alpha\quad and \quad \int _{\C^*} |u|^2e^{-\hat \psi}\omega _c \le \frac{\sa _{\Gamma}^2}{R^2m}.
\]
As in the Proof of Theorem \ref{loc-interp-glob-interp-disk}, if $R$ sufficiently large, then $F_1 := \xi F - u \in \sh ^2(\C^* , e^{-\hat \psi} \omega _c)$  satisfies 
\[
\sum _{\gamma \in \Gamma} |F_1(\gamma) - f(\gamma)|^2e^{-\vp (\gamma)}  \le \frac{1}{2}||f||^2 = \frac{1}{2}.
\]
(Here one uses uniform separation and \cite[Corollary 1.6(a)]{v-rs1},which is the Euclidean analogue of \ref{Bergman-sums-disk}(a).)  The remainder of the proof is the same as that of Theorem \ref{loc-interp-glob-interp-disk}.
\end{proof}

In view of Theorems Theorem \ref{loc-interp-glob-interp-disk} and Theorem \ref{loc-interp-glob-interp-cyl}, Theorem \ref{necess-disk}, \cite[Theorem 3.7]{v-rs1}, and Definition \ref{unif-sep-density-defn}(i), we obtain the following result.

\begin{cor}\label{density-est-cor}
If $\Gamma \subset \D^*$ is an interpolation sequence, then $D^{b+}_{\vp} (\Gamma) < 1$ and $D^{*+}_{\vp} (\Gamma) \le 1$.  
\end{cor}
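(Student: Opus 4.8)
The plan is to reduce the two density inequalities to the necessity theorems already available in the unit disk (Theorem \ref{necess-disk}) and in the cylinder (\cite[Theorem 3.7]{v-rs1}), via the two transfer results Theorem \ref{loc-interp-glob-interp-disk} and Theorem \ref{loc-interp-glob-interp-cyl}. First I would fix any $a \in (0,1)$ and split $\Gamma = \Gamma ^b_a \cup \Gamma ^*_a$ as in Definition \ref{unif-sep-density-defn}. Restricting an $\ell ^2$-datum on $\Gamma$ to either piece stays in the corresponding $\ell ^2$-space, so surjectivity of $\sr _\Gamma$ forces surjectivity of $\sr _{\Gamma ^b_a}$ and $\sr _{\Gamma ^*_a}$ onto their targets. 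Since the quantities $D^{b+}_\vp$ and $D^{*+}_\vp$ are computed asymptotically near the border and the puncture respectively, they are unchanged by discarding finitely many points and are in particular independent of $a$; it therefore suffices to establish $D^{b+}_\vp(\Gamma ^b_a) < 1$ and $D^{*+}_\vp(\Gamma ^*_a) \le 1$.

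For the border part, apply Theorem \ref{loc-interp-glob-interp-disk} to $\Gamma ^b_a$: this produces $\hat\vp \in \sC ^2(\D)$ with $\Delta \hat\vp \ge (m+2)\omega _P$ and $\hat\vp = \chi\vp + C|z|^2$ such that $\sr _{\Gamma ^b_a} : \sh ^2(\D, e^{-\hat\vp}\omega _P) \to \ell ^2(\Gamma ^b_a, e^{-\vp})$ is surjective. One checks the matching upper bound $\Delta \hat\vp - 2\omega _P \le M'\omega _P$: on $\{|z| \ge c\}$ one has $\Delta \hat\vp = \Delta \vp + C\omega _o$ with $C\omega _o \le C\omega _P$ and $\Delta \vp - 2\omega _P \le M\omega _P$ by (B); the transition annulus $\{c/2 \le |z| \le c\}$ is compact; and on $\{|z| < c/2\}$ one has simply $\Delta \hat\vp = C\omega _o$. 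Since $e^{-\vp} \asymp e^{-\hat\vp}$ on $\Gamma ^b_a \subset \{|z| \ge c\}$, Theorem \ref{necess-disk} applies and gives $D^+_{\hat\vp}(\Gamma ^b_a) < 1$. Finally, because $\hat\vp$ differs from $\vp$ by $C|z|^2$ near the border and $C\omega _o/\omega _P \to 0$ there, the weighted density integrals defining $D^+_{\hat\vp}$ and $D^{b+}_\vp$ have the same limit as $r \to 1$; hence $D^{b+}_\vp(\Gamma ^b_a) = D^+_{\hat\vp}(\Gamma ^b_a) < 1$.

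For the puncture part, set $\psi = \vp + 2\log\log\tfrac{1}{|\cdot|^2}$ and apply Theorem \ref{loc-interp-glob-interp-cyl} to $\Gamma ^*_a$: it produces $\hat\psi$ on $\C^*$ with $\Delta \hat\psi \ge m\omega _c$, equal to $\psi$ on $\{|z| \le c\} \supset \Gamma ^*_a$, such that $\sr _{\Gamma ^*_a} : \sh ^2(\C^*, e^{-\hat\psi}\omega _c) \to \ell ^2(\Gamma ^*_a, e^{-\psi})$ is surjective. On $\{|z| \le c\}$ one has $\Delta \hat\psi = \Delta \psi = \Delta \vp - 4\omega _P$, which by $(\star)$ obeys the two-sided cylindrical bound; so $\hat\psi$ meets the hypotheses of the cylinder necessity theorem \cite[Theorem 3.7]{v-rs1}, yielding that the cover density of $\Gamma ^*_a$ relative to $\hat\psi$ is $\le 1$. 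Unwinding the definitions, as in the discussion following Definition \ref{punc-density-defn}, this cover density is precisely $D^{*+}_\vp(\Gamma ^*_a)$, so $D^{*+}_\vp(\Gamma) \le 1$.

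I expect the real work to lie not in any single sharp estimate but in the bookkeeping of the last two paragraphs: one must confirm that the auxiliary weights $\hat\vp$ and $\hat\psi$ genuinely satisfy the two-sided curvature hypotheses required by Theorems \ref{necess-disk} and \cite[Theorem 3.7]{v-rs1}, and that the densities computed with the modified weights agree with $D^{b+}_\vp$ and $D^{*+}_\vp$. Both points follow from the fact that these densities are invariants of the asymptotic geometry near the relevant boundary component, where the modifications either vanish or contribute negligibly; this is the main, if routine, obstacle.
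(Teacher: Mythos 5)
Your proof takes exactly the route the paper intends: the paper records this corollary as an immediate consequence of Theorems~\ref{loc-interp-glob-interp-disk} and \ref{loc-interp-glob-interp-cyl} combined with the necessity theorems in the disk (Theorem~\ref{necess-disk}) and the cylinder (\cite[Theorem 3.7]{v-rs1}), which is precisely the decomposition and transfer you carry out. Your last two paragraphs simply make explicit the bookkeeping (two-sided curvature bounds on $\hat\vp$, $\hat\psi$, and invariance of the asymptotic densities under the compactly supported modifications) that the paper leaves implicit, so this is the same argument written out in more detail.
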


\noi This completes the proof of Theorem \ref{pdisk-nec} and, therefore, Theorem \ref{pdisk-char}.
\qed

\section{Interpolation in a general finite, Poincar\'e-hyperbolic Riemann surface}\label{main-proof}

For the rest of this section, we fix a finite Riemann surface $X$, i.e., the complement of finitely many points in a compact Riemann surface with (smooth) boundary.

\subsection{The ends of a finite Riemann surface covered by the unit disk}\label{ends-section}
There is a compact subset $K \relcomp X$ that is itself a Riemann surface with boundary (and in particular, has no punctures), such that the complement $X-K$ is a disjoint union of subset of $X$, called {\it ends}, each of which is biholomorphic either to an annulus or a punctured disk.  We want to describe the Poincar\'e metric $\omega _P$ on these boundary neighborhoods in a convenient way.  Doing so amounts to choosing good coordinate charts, as we now do.  For more information, see \cite{quim-rs,diller-green,sullivan}.

We fix a universal covering $\pi :\D \to X$ and denote by $G$ the associated group of deck transformations.

Let us start with the annuli, i.e., those ends whose outer boundaries are border curves of $X$.  If $\tilde \gamma$ denotes one such border curve, then there is a unique closed geodesic $\gamma$ in the homotopy class of $\tilde \gamma$.  Let $h_{\gamma}\in G$ denote the deck transformation corresponding to $[\gamma]\in \pi _1(X)$.  The quotient space 
\[
\D / \left < h _{\gamma} \right >
\]
is biholomorphic to the annulus 
\[
\mathbf{A}_R = \{ z \in \C\ ;\ e^{-R} < |z| < e^R\}, \quad \text{where }R = \pi^2 /{\rm length}(\gamma).
\]
Notice that, in $\mathbf{A}_R$, the unit circle is a geodesic of length $\pi ^2/R= {\rm length}(\gamma)$ for the Poincar\'e metric.  There is a covering map $\pi _{\gamma}:\mathbf{A}_R \to X$, which sends the unit circle in $\mathbf{A}_R$ to the geodesic $\gamma$, and maps the set 
\[
\mathbf{A}_R^{\rm outer} := \mathbf{A}_R - \D = \{ 1 \le |z| < e^R\}
\]
isometrically onto the topological annulus in $X$ bounded by $\gamma$ and $\tilde \gamma$.  The map $\pi_{\gamma}$ can be defined as follows:  a point $z$ in the annulus corresponds to an orbit $\left < h_{\gamma}\right >(\zeta)$ of some $\zeta \in \D$, and 
\[
\pi _{\gamma} (z) = \{ \text{ orbit under $G$ of the point }\zeta\}.
\]
This map is clearly well-defined, since the entire orbit $\{h^m_{\gamma}(\zeta)\ ;\ m \in \Z\}$ is contained in the orbit $G(\zeta)$.  Moreover, it is clear that $\pi_{\gamma}$ is a local isometry.  Since $\pi _{\gamma}|_{\mathbf{A}_R^{\rm outer}}$ is a bijection, we get a very precise description of the Poincar\'e metric of $X$ near the boundary $\tilde \gamma$.

Let us now turn to punctured disk ends, whose punctures are the punctures of $X$.  There is a compact Riemann surface $\tilde X$ with smooth boundary of real codimension $1$ such that $X \subset \tilde X$ and $\tilde X-X$ consists precisely of $N$ points $p_1,...,p_N$.  Let us fix one such $p=p_i$, and a neighborhood $\tilde U$ of $p$ in $\tilde X$ that is homeomorphic to a disk.  We write $U^* = \tilde U \cap X$, and note that $U^*$ is homeomorphic to a punctured disk.  There is a unique homotopy class $[\gamma]$ of a co-oriented loop $\gamma \subset U^*$ that generates the fundamental group of $U^*$.  Let $g _{[\gamma]} \in G$ denote the corresponding deck transformation of $\pi :\D \to X$.  Then we have a covering map
\[
\sigma : \D \to \D /\left < g_{[\gamma]}\right >  \cong _{\co} \D^*,
\]
and a second covering map $\pi _{p} : \D ^* \to X$  defined by 
\[
\pi_p (z) = \pi (\sigma^{-1}(z)).
\]
As in the case of the annulus, this map is well-defined, and is a local isometry.  Therefore, some neighborhood of the origin in $\D ^*$ (say $\{ 0 < |z| < c\}$) is biholomorphic and isometric to a neighborhood of $p$ in $U^*$ (which we may take to be all of $U^*$, after shrinking the original $U^*$).  

Let $A_1,...,A_k \subset X$ be neighborhoods of the $1$-dimensional boundary components, and $P_1,...,P_N\subset X$ neighborhoods of the punctures.  We also fix biholomorphic maps
\[
\pi ^{A_i}  :A_i \to \mathbf{A}_{R_i} \quad \text{and} \quad \pi ^{P_j} :P_j \to \D^*
\]
as described above.  Finally we define the core of $X$ as 
\[
X_{\rm core} := X - \left ( \bigcup _i A_i \cup \bigcup _j P_j\right ) \relcomp X.
\]
When we want to refer to an end without referring to its outer boundary dimension, we shall write $U_i$ for such an end, instead of $A_i$ or $P_i$.

As a corollary of these local descriptions, we get the following result on the local structure of the Poincar\'e metric near the boundary of a finite Riemann surface.

\begin{prop}\label{asymp-poincs}
Let $X$ be a finite Riemann surface.  Then there is a compact set $K \relcomp X$ whose complement is a union of ends $U_i$ as just described, and subsets $V_i \subset U_i$ whose closure contains $\di X \cap \overline{U_i}$ but does not meet $\di K$, such that the Poincar\'e metric $\omega _P$ of $X$ and the Poincar\'e metric $\omega _{P,i}$ of $U_i$ satisfy 
\[
\omega _{P,i}|_{V_i} = \omega _P|_{V_i}.
\]
\end{prop}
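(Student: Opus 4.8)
The plan is to read the statement off directly from the explicit end charts constructed above; no new ideas are needed beyond those constructions and the uniqueness of the metric of constant curvature $-4$.

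First I would take $K \relcomp X$ to be the compact subsurface with which this section opened, so that $X - K = \bigsqcup_i U_i$ with each $U_i$ either an annular end (with covering map $\pi_{\gamma_i} : \mathbf{A}_{R_i} \to X$) or a punctured-disk end (with covering map $\pi_{p_i} : \D^* \to X$). The key observation is that $\pi_{\gamma_i}$ and $\pi_{p_i}$ are local isometries for the respective Poincar\'e metrics. Indeed, writing $\pi : \D \to X$ for the universal cover, by construction $\pi$ factors as $\pi = \pi_{\gamma_i} \circ q_i$, where $q_i : \D \to \mathbf{A}_{R_i} = \D / \langle h_{\gamma_i}\rangle$ is the intermediate quotient (and similarly $\pi = \pi_{p_i} \circ \sigma_i$ with $\sigma_i : \D \to \D^* = \D/\langle g_{[\gamma_i]}\rangle$). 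Since $\langle h_{\gamma_i}\rangle$ and $\langle g_{[\gamma_i]}\rangle$ are subgroups of the deck group $G \subset {\rm Isom}(\omega_P^{\D})$, the metric $\omega_P^{\D}$ descends through $q_i$ (resp. $\sigma_i$) to a complete metric of constant curvature $-4$ on the model end, which by the uniqueness proved earlier is precisely its Poincar\'e metric; pushing $\omega_P^{\D}$ forward along both factors of $\pi$ then shows that $\pi_{\gamma_i}$ (resp. $\pi_{p_i}$) pulls $\omega_P^X$ back to the Poincar\'e metric of the model end.

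Next I would restrict to the sub-region where these covering maps are injective. As recalled in the constructions above, $\pi_{\gamma_i}$ maps $\mathbf{A}_{R_i}^{\rm outer}$ bijectively onto the topological annulus in $X$ bounded by the closed geodesic $\gamma_i$ and the border curve $\tilde\gamma_i$, and $\pi_{p_i}$ maps $\{0 < |z| < c_i\}$ biholomorphically onto a punctured neighborhood of the puncture; hence each is a genuine isometry there. By the rigidity of annulus and punctured-disk uniformizations (uniqueness up to a rotation) we may and do choose the end chart $\pi^{U_i}$ so that, on the corresponding outer sub-region of $U_i$, it coincides with $(\pi_{\gamma_i})^{-1}$ (resp. $(\pi_{p_i})^{-1}$). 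Setting $V_i$ to be the image under $\pi_{\gamma_i}$ of $\{1 < |z| < e^{R_i}\}$ (resp. the image under $\pi_{p_i}$ of $\{0 < |z| < c_i\}$), one then has $V_i \subset U_i$ (enlarging the ends slightly if necessary, exactly as in the constructions above), and, because $\omega_{P,i}$ is the pullback of the model Poincar\'e metric by $\pi^{U_i}$ and $\pi^{U_i}$ equals $(\pi_{\gamma_i})^{-1}$ (resp. $(\pi_{p_i})^{-1}$) there, $\omega_{P,i}|_{V_i} = \omega_P|_{V_i}$. Finally I would check the stated position of $V_i$: its closure in the compactification reaches $\di X \cap \overline{U_i}$ as $|z| \to e^{R_i}$ (resp. $|z| \to 0$), while it stays away from $\di K$, which in the end chart sits at $|z| = e^{-R_i}$ (resp. $|z| = 1$).

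I do not expect a serious obstacle. The one place that requires a little care is matching the end chart $\pi^{U_i}$ to the covering $\pi_{\gamma_i}$ (resp. $\pi_{p_i}$) on the outer region, so that ``the Poincar\'e metric of $U_i$'' restricted to $V_i$ is literally the restriction of $\omega_P^X$ rather than merely isometric to it; this is handled by the rotational rigidity of the relevant uniformizations together with the uniqueness of the hyperbolic metric.
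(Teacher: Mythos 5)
Your key observation --- that the intermediate covers $\pi_{\gamma_i}:\mathbf{A}_{R_i}\to X$ and $\pi_{p_i}:\D^*\to X$ are local isometries, because the cyclic groups $\langle h_{\gamma_i}\rangle$ and $\langle g_{[\gamma_i]}\rangle$ sit inside the deck group of the universal cover (which acts by isometries of $\omega_P^{\D}$) and the complete metric of curvature $-4$ is unique --- is exactly what the paper's ``proof'' amounts to; the paper offers no separate argument, calling the proposition a ``corollary of these local descriptions.'' So far, so good.

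There is, however, a genuine gap at the step where you ``choose the end chart $\pi^{U_i}$ so that, on the corresponding outer sub-region of $U_i$, it coincides with $(\pi_{\gamma_i})^{-1}$.'' A single-valued branch of $(\pi_{\gamma_i})^{-1}$ does exist on all of $U_i$ (the cover $\pi_{\gamma_i}$ is trivial over $U_i$ since $\pi_1(U_i)$ is generated by a loop freely homotopic to $\gamma_i$), but it maps $U_i$ biholomorphically onto a single connected component of $\pi_{\gamma_i}^{-1}(U_i)$, which is a \emph{proper} sub-annulus of $\mathbf{A}_{R_i}$; by the identity theorem no biholomorphism of $U_i$ \emph{onto all of} $\mathbf{A}_{R_i}$ can agree with it on an open set, so ``rotational rigidity'' does not do what you want it to. Indeed the literal assertion of the proposition is incompatible with the Ahlfors--Schwarz argument recalled earlier in the section: for a proper open $U_i\subsetneq X$ one has $\omega_{P,U_i}\ge\omega_P^X$ on $U_i$ with strict inequality, and since the difference of logarithmic conformal factors is a nonnegative subharmonic function, vanishing on an open set would force it to vanish identically by the strong maximum principle. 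What your reasoning (and the paper's discussion) actually proves is that $\omega_P^X|_{V_i}$ equals the pullback, under the local inverse of $\pi_{\gamma_i}$ (resp.\ $\pi_{p_i}$), of the Poincar\'e metric of the intermediate quotient $\D/\langle h_{\gamma_i}\rangle\cong\mathbf{A}_{R_i}$ (resp.\ $\D/\langle g_{[\gamma_i]}\rangle\cong\D^*$); that is the explicit local form of $\omega_P$ the paper uses later, and you have the right mechanism for it, but you should drop the attempt to force $\pi^{U_i}$ to coincide with $(\pi_{\gamma_i})^{-1}$ and instead read $\omega_{P,i}$ as the Poincar\'e metric of that quotient model rather than the intrinsic Poincar\'e metric of the open subsurface $U_i$.
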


\subsection{Ortega Cerd\`a's Theorem}\label{past-results}

As we mentioned in the introduction, Ortega Cerd\`a \cite{quim-rs} proved Theorem \ref{main} in the case where $X$ has no punctures, and at least one border curve.  Though he never explicitly says it, Ortega Cerd\`a normalizes the Poincar\'e metric to have constant curvature $-2$.

In fact, Ortega Cerd\`a's main theorem is proved for $L^{\infty}$, and he then sketches how the same methods can be used to establish the $L^p$ case.  The crucial result he needs to carry out his proof in the $L^p$ case is the following theorem, which he proves. 

\begin{d-thm}\cite[Theorem 17]{quim-rs}\label{aim-dbar-lp}
Let $X$ be a finite open Riemann surface with no punctures, and with Poincar\'e metric $\omega _P$ (of constant curvature $-4$), let $\phi$ be a function satisfying  $(2+\ve)\omega _P \le \Delta \phi \le M\omega _P$ for some positive constants $M$ and $\ve$, and let $p \in [1,\infty)$.  Then there is a constant $C > 0$ such that for any locally integrable $(0,1)$-form $\alpha$ on $X$ there exists a function $u \in L^p_{\ell oc}(X)$ such that 
\[
\dbar u = \alpha \quad \text{and} \quad \int _X |u|^p e^{-\phi} \omega _P \le C \int _X |\alpha |^p_{\omega _P} e^{-\phi} \omega _P,
\]
provided the right hand side is finite.
\end{d-thm}

At least for $L^2$, it is possible to prove a somewhat stronger statement using the technique of Donnelly-Fefferman-Ohsawa.  The result we prove is stronger in two senses.  Firstly, we don't need to assume that $X$ has no punctures, and secondly, we do not assume an upper bound on the Laplacian of the weight.  (The bounded Laplacian condition implies that the weight function is $\sC ^{1,\alpha}$ for any $\alpha < 1$, but, as the next result shows, when $p=2$ the Theorem \ref{aim-dbar-lp} holds for weights that are much more general.)

\begin{d-thm}\label{odf-gen}
Let $X$ be a finite open Riemann surface with Poincar\'e metric $\omega _P$ (again of constant curvature $-4$), and let $\xi$ be a weight function satisfying  $\Delta \xi \ge 2(1+\ve)\omega _P$ for some positive constant $\ve$.  Then there is a constant $C > 0$, depending only on $X$ and $\ve$, such that for any locally integrable $(0,1)$-form $\alpha$ on $X$ there exists a function $u \in L^1_{\ell oc}(X)$ such that 
\[
\dbar u = \alpha \quad \text{and} \quad \int _X |u|^2 e^{-\xi} \omega _P \le C \int _X |\alpha |^2_{\omega _P} e^{-\xi} \omega _P,
\]
provided the right hand side is finite.
\end{d-thm}

\begin{rmk}
As with Theorems \ref{disk-odf-thm} and \ref{punctured-disk-odf-thm}, Theorem \ref{odf-gen} follows from H\"ormander's Theorem if we assume $\Delta \xi \ge 4(1+\ve)\omega _P$, but not in general.   
\red
\end{rmk}

In the proof of Theorem \ref{odf-gen}, as well as elsewhere, we will need the following lemma.

\begin{lem}\label{bumper}
Let $X$ be a finite open Riemann surface and let $\Omega$ be a continuous $(1,1)$-form with compact support in $X$.  Then for any $\ve > 0$ there exists a constant $C>0$ and a smooth function $\tau : X \to (0,C)$ such that 
\[
\Delta \tau \ge -\Omega - \ve \omega _P.
\]
\end{lem}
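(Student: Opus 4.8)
The plan is to build $\tau$ as a shifted Green potential on $X$. First I would clean up the right-hand side: since $\Omega$ is continuous with compact support and $\omega _P$ is a smooth positive metric, we may choose $\chi \in \sC ^{\infty}_o(X)$ with $0\le \chi \le 1$ and $\chi \equiv 1$ on a neighbourhood of $\operatorname{supp}\Omega$, together with a constant $C_0>0$, so that the smooth compactly supported form $\theta := C_0\chi\,\omega _P$ satisfies $\theta \ge 0$ and $\theta \ge -\Omega$ everywhere on $X$. It therefore suffices to produce a smooth function $\tau : X \to (0,C)$ with $\Delta \tau = \theta$: then $\Delta \tau = \theta \ge -\Omega \ge -\Omega - \ve \omega _P$, so the $\ve$ in the statement is really just slack that the argument will not need.

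Second, because $X$ is covered by the disk it is a hyperbolic Riemann surface and hence possesses a Green's function $g_X(x,y)\ge 0$, normalised — consistently with $\Delta = dd^c$ and the identity $\Delta \log |\phi _z| = \pi \delta _z$ recorded earlier — so that $\Delta _x g_X(\cdot ,y) = -\pi \delta _y$ and $g_X(\cdot ,y)\to 0$ at the regular boundary points, i.e. along the border curves of $X$. Put
\[
\rho (x) := \frac{1}{\pi}\int _X g_X(x,y)\,\theta (y), \qquad \tau := \bigl(\sup _X \rho \bigr) + 1 - \rho .
\]
Since $\theta$ is smooth with compact support and $g_X(x,\cdot)$ is locally integrable, the integral converges, $\rho \ge 0$, and $\rho$ is $\sC ^{\infty}$ on $X$ by interior elliptic regularity. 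A direct computation gives $\Delta \tau = -\Delta \rho = \theta$, and once $\rho$ is known to be bounded we obtain $1 \le \tau \le \sup _X\rho + 1$, so $\tau$ has all the required properties with $C := \sup _X \rho + 1$.

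The main point — and the only genuine difficulty — is boundedness of $\rho$ near the ends of $X$ (on the relatively compact core $\rho$ is continuous, hence bounded). Along a border curve, $g_X(x,y)\to 0$ uniformly for $y$ in the compact set $\operatorname{supp}\theta$, so $\rho \to 0$ there. Near a puncture $p$, fix a punctured coordinate neighbourhood $\{0<|z|<c\}$ disjoint from $\operatorname{supp}\theta$; there $\rho$ is harmonic and nonnegative. For $0<r<c$, Stokes' theorem together with $\Delta _x g_X(\cdot ,y) = -\pi \delta _y$ gives $\int _{|z|=r} d^c_x g_X(x,y) = \int _{|z|<r}\Delta _x g_X(x,y) = 0$ because $y \notin \{|z|<r\}$; integrating this against $\theta$ (differentiating under the integral sign, legitimate since $x$ ranges over a compact set away from $\operatorname{supp}\theta$) shows the flux $\int _{|z|=r} d^c\rho$ vanishes. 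A nonnegative harmonic function on a punctured disk with vanishing flux around the puncture is bounded — indeed it extends harmonically across $p$ — so $\rho$ is bounded near $p$. Hence $\rho$, and with it $\tau$, is bounded on $X$, and the proof is complete. (The flux computation is exactly the manifestation here of "Green's functions do not see isolated boundary points": it re-proves that $g_X$ coincides on $X$ with the Green's function of the compact bordered surface obtained by filling in the punctures.)
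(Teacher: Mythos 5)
Your Green-potential construction is a genuinely different route from the paper's, which instead embeds a compactification $Y \supset X$ projectively via sections of a high power of an ample line bundle, trivializes by a section $\sigma^{(k)}_0$ that is nonvanishing in $X$, and cuts off the resulting potential near the punctures. When $X$ has at least one border curve your argument is essentially correct and even yields a better conclusion; but the lemma is also needed (and proved in the paper) when $X$ has no border curves, and there the argument breaks down at the very first step. The claim that ``$X$ is covered by the disk, hence has a Green's function'' conflates uniformization-hyperbolic with potential-theoretic hyperbolic. A finite open Riemann surface of the form $\hat X - \{p_1,\dots,p_n\}$ with $\hat X$ compact and boundaryless --- for instance $\C - \{0,1\}$ --- is covered by $\D$ yet carries no Green's function: removing a polar set does not change potential-theoretic type, and compact surfaces are parabolic. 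Your closing remark implicitly concedes the point, since it appeals to ``the Green's function of the compact bordered surface obtained by filling in the punctures,'' which presupposes that surface has nonempty boundary.

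The deeper symptom is your assertion that the $\ve\omega_P$ term is unused slack, since your proof aims for a bounded $\tau$ with $\Delta\tau = \theta \ge 0$, $\theta \not\equiv 0$. In the puncture-only case this is impossible: a bounded subharmonic function on $X$ extends subharmonically across the finite set of punctures to all of $\hat X$, and then $\int_{\hat X}\Delta\tau = 0$ by Stokes forces $\Delta\tau \equiv 0$, hence $\theta \equiv 0$. So the $\ve$ is not slack; it is precisely the room that lets the paper multiply its potential by a cutoff $\kappa$ vanishing near a puncture, at the cost of an $O(R^{-1})\omega_P$ curvature error that is absorbed into $\ve\omega_P$. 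Indeed the paper remarks explicitly that $\ve = 0$ is achievable exactly when $X$ has a border curve --- which is the regime where your Green-potential argument is a valid and arguably cleaner alternative --- but the lemma as stated must cover the other case too, and there your approach needs to be replaced or supplemented by a cutoff of the type the paper uses.
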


\begin{proof}
Observe that $X$ is an open subset of a compact Riemann surface $Y$.  Thus there exists a smooth metric of strictly positive curvature for some holomorphic line bundle, say $L \to Y$.  By Kodaira's Embedding Theorem, if $k \in \N$ is sufficiently large then the sections of $L^{\tensor k} \to Y$, embed $Y$ in some projective space.  If we take a basis of sections $\sigma^{(k)} _0,...,\sigma^{(k)} _{N_k} \in H^0(Y, L^{\tensor k})$, we can form the metric 
\[
\psi_k := \log  \sum _{j=0} ^{N_k}  |\sigma^{(k)}_j|^2.
\]
The curvature of this metric is as large as we like on any compact subset of $X$, and in particular, for sufficiently large $k$ we have 
\[
\Delta \psi _k \ge - \Omega
\]
on the support of $\Omega$.  

We can choose the first section $\sigma^{(k)} _0$ to have no zeros in $X$.  Then we can define functions $f^{(k)}_j := \sigma^{(k)} _j / \sigma^{(k)} _0$, $0 \le j \le N_k$ for some constant $c$.  The function 
\[
\Psi _k = \log (1+|f^{(k)}_1|^2+...+|f^{(k)}_{N_k}|^2)
\]
is the local trivialization of the metric $\psi _k$ with respect to the frame $\sigma ^{(k)}_0$.  It is a smooth function on the set $Y-\{\sigma^{(k)}_0=0\}$, and in particular, on $X$.  Thus if the zeros of $\sigma ^{(k)}_0$ are not on the boundary of $X$, this function is bounded.  In particular, if $X$ has at least one border curve, then we are done (and even better: we can take $\ve = 0$).

However, if (and only if) $X$ has only punctures , it is impossible to avoid placing the zeros of $\sigma^{(k)}_0$ on the boundary of $X$.  Let us assume that $Y-X$ consists of finitely many points $p_1,..., p_m$.  We choose our section $\sigma ^{(k)}_0$ to have its only zero at $p_1$ (evidently with high multiplicity).  Let us choose coordinates $z$ on the punctured neighborhood $U_1$ of $p_1$, as defined in Subsection \ref{ends-section}, and fix $c > 0$ such that $\{|z|\le c\} \cap {\rm Support}(\Omega) = \emptyset$.  Now let $h: (-\infty , \infty) \to [0,1]$ be a smooth increasing function such that $h(x) = 1$ for $x \ge 0$,  $h(x) = 0$ for $x \le -R-1$, $h'(x) \le R^{-1}$ and $|h''(x)| \le C R^{-2}$.  Let 
\[
\kappa (z) := \chi (\log \log \tfrac{1}{|z|^2} - \log \log \tfrac{1}{c^2})
\]
for $|z| \le c$, and $\kappa (p) = 1$ for $p \in X - \{ |z| \le c\}$.  Note that, for $R$ large,  
\[
|\di \kappa (z)|^2_{\omega _P} \lesssim R^{-2},
\]
and that 
\[ 
\Delta  \kappa = - 4 \chi '(\log \log \tfrac{1}{|z|^2} - \log \log \tfrac{1}{c^2})\omega _P + \chi '' (\log \log \tfrac{1}{|z|^2} - \log \log \tfrac{1}{c^2}) \omega _P \sim - 4R^{-1} \omega _P.
\]
Consider the function smooth, compactly supported (and hence, bounded) function
\[
\tau  = \kappa \cdot \Psi _k.
\]
Then 
\[
\Delta \tau = \Delta \kappa \Psi _k + \ii (\di \kappa \wedge \dbar \Psi _k + \di \Psi _k \wedge \dbar \kappa) + \kappa \Delta \Psi _k \ge - \Omega + O(R^{-1})(-\omega _P).
\]
The result follows as soon as $R$ is sufficiently large.
\end{proof}

\begin{proof}[Proof of Theorem \ref{odf-gen}]
In each end $U_j$ with coordinates coming from the universal cover as described in the previous section, we have a function $\eta_j$, which we take to be $\log \log \frac{1}{|z|^2}$ or $\log \frac{1}{1-|z|^2}$, depending on whether $U_j$ is a punctured disk or an annulus respectively.  Fix cutoff functions $\chi _j$ that are identically $1$ in $V_j$, take values in $[0,1]$, and are supported in $U_j$.  Define the function 
\[
\eta := \sum _j \chi _j \eta _j.
\]
Since $\Delta \eta _j - (1+\nu) \ii \di \eta _j \wedge \dbar \eta _j \ge - 2 \nu \omega _P$, 
\[
\Delta \eta - \ii \di \eta \wedge \dbar \eta \ge - 2\nu \omega _P +  \Xi
\]
for some smooth $(1,1)$-form $\Xi$ with compact support in $X$.  (If fact, $\Xi$ is supported in the union of the annuli $U_j - V_j$.)  We shall now apply Theorem \ref{odf-thm} with $\psi = \xi +\eta +\tau$ where $\tau$ is as in Lemma \ref{bumper} with $\Omega = \Xi$, and $\omega = \omega_P$.  We calculate that for $\nu$ sufficiently small and $\tau$ appropriately chosen, 
\[
\Delta (\psi +{\rm R}(\omega)  +\Delta \eta - (1+\nu)\di \eta \wedge \dbar \eta \ge \Delta \xi - 2 \omega _P +\Delta \tau +\Xi - 2 \nu \omega _P \ge \ve \omega _P.
\]
Thus we may take $\Theta = \ve \omega _P$ in Theorem \ref{odf-thm}.  Since $0 \le \tau \le C$ for some constant $C$ that depends only on $X$ and $\ve$, we find that if 
\[
\int_X e^{-\xi}|\alpha|^2_{\omega _P} \omega _P  \le  \int_X e^{-\tau-\xi}|\alpha|^2_{\omega _P} \omega _P = \ve  \int_X e^{\eta-\psi}|\alpha|^2_{\Theta} e^{-\psi} \omega _P  <+\infty
\]
there exists a locally integrable function $u$ such that $\dbar u = \alpha$, and  
\begin{eqnarray*}
\int _{X} e^{-\xi} |u|^2\omega _P &\le& e^C \int_X e^{-\tau-\xi}|u|^2 \omega _P \\
&\le& \frac{\nu+1}{\nu}e^C  \int_X e^{-\tau-\xi}|\alpha|^2_{\Theta}\\
&=& \frac{\nu+1}{\nu}e^C \ve ^{-1} \int_X e^{-\tau-\xi}|\alpha|^2_{\omega _P} \omega _P\\
&\le& \frac{\nu+1}{\nu}e^C \ve ^{-1} \int_X e^{-\xi}|\alpha|^2_{\omega _P} \omega _P.
\end{eqnarray*}
This completes the proof.
\end{proof}

\begin{rmk}
The technique of the proof, in particular the use of Lemma \ref{bumper}, works perfectly well if we require $\Delta \xi \ge 2(1+\ve)\omega _P$ to hold only outside a given compact subset of $X$, and allow $\Delta \xi$ to be negative in the interior of $X$, so long as $\xi$ is quasi-subharmonic, i.e., $\Delta \xi$ is bounded below by a smooth negative $(1,1)$-form.
\red
\end{rmk}

\begin{rmk}
Note also that although Theorem \ref{odf-gen} to some extent generalizes Theorems \ref{disk-odf-thm} and \ref{punctured-disk-odf-thm}, the constants in the latter are sharper.  The reason is that the function $\eta$ constructed in the proof only satisfies $\Delta \eta \ge \ii \di \eta \wedge \dbar \eta$ in the ends of $X$, but in general it is negative in the interior.  We do not know if it is possible to find a real-valued function $\eta \in W^{1,2}_{\ell oc}(X)$ such that 
\[
\Delta \eta \ge \ii \di \eta \wedge \dbar \eta \quad \text{and} \quad \Delta \eta = 2\omega _P,
\]
except when $X=\D$ or $X=\D^*$.  Real-valued functions satisfying the inequality 
\[
C \Delta \eta \ge \ii \di \eta \wedge \dbar \eta
\]
for some constant $C>0$ were first introduced by McNeal \cite[Definition 1]{jeff-sbg} in his work on the $\dbar$-Neumann problem.  McNeal called them "functions with self-bounded complex gradient".
\red
\end{rmk}

\subsection{Uniform separation and asymptotic density}

Since all the ends of our finite Riemann surface $X$ are either bordered or punctured ends, we can import the notions of uniform separation and of density from the work we did on the punctured disk in the previous section.

In each end $U_j$, we have an open set $V_j$ which is biholomorphic either to an annulus or a punctured disk under the map $\pi ^{U_j}$.  We shall think of $\pi ^{U_j}(V_j)$ as a subset of $\D^*$, which is either supported near the border or near the puncture.  

For each $j$, the weight function $\vp _j := ((\pi ^{U_j}|_{V_j})_{*}\vp$ satisfies the hypotheses of Interpolation Theorem \ref{pdisk-char} on the image of $\pi ^{U_j}$.  We define the sequences $\Gamma _j := \pi ^{U_j}(\Gamma \cap V_j)$.  Based on Definition \ref{unif-sep-density-defn},  we are now ready to define uniform separation and asymptotic density of $\Gamma$.

\begin{defn}
Let $\Gamma \subset X$ be a closed discrete subset.
\begin{enumerate}
\item[(i)]  We say $\Gamma$ is uniformly separated if each $\Gamma _j \subset \D^*$ is uniformly separated according to Definition \ref{unif-sep-density-defn}.
\item[(ii)] The number  
\[
D^+_{\vp} (\Gamma) := \max _j \dot D^+_{\vp_j}(\Gamma _j)
\]
is called the {\it asymptotic (upper) density} of $\Gamma$ with respect to $\vp$.
\red
\end{enumerate}
\end{defn}

\subsection{Necessity}

Conveniently, necessity of the conditions of Theorem \ref{main} follow rather easily from the special case of the punctured disk.  We therefore begin with necessity.

\subsubsection{Uniform separation of interpolation sequences}

\begin{prop}
If $\Gamma$ is an interpolation sequence then $\Gamma$ is uniformly separated.
\end{prop}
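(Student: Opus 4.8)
The plan is to reduce the statement to the already-established special case of the punctured disk, Theorem~\ref{pdisk-char}, using the local description of the ends of $X$ from Subsection~\ref{ends-section} (Proposition~\ref{asymp-poincs}). The key point is that uniform separation of $\Gamma$ means, by definition, uniform separation of each local sequence $\Gamma_j = \pi^{U_j}(\Gamma\cap V_j)\subset \D^*$, so it suffices to show that each $\Gamma_j$ is an interpolation sequence for the appropriate data on $\D^*$ (or on a neighborhood of the border/puncture) and then invoke necessity of uniform separation already proved inside Theorem~\ref{pdisk-nec}.

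First I would fix an end $U_j$ with its biholomorphism $\pi^{U_j}:V_j\to \D^*$ (onto a neighborhood of the border or the puncture), and the pushed-forward weight $\vp_j$, which satisfies the hypotheses (B) and $(\star)$ of Theorem~\ref{pdisk-char} on $\pi^{U_j}(V_j)$ by Proposition~\ref{asymp-poincs}, since $\omega_P$ and $\omega_{P,j}$ agree on $V_j$. Given a datum $f$ supported on $\Gamma_j$ with finite $\ell^2(\Gamma_j, e^{-\vp_j})$ norm, I would transplant it to a datum on $\Gamma\cap V_j\subset \Gamma$, extend by zero to all of $\Gamma$, and use the hypothesis that $\sr_\Gamma$ is surjective to produce $F\in \sh^2(X, e^{-\vp}\omega_P)$ with $F|_\Gamma$ equal to this extended datum. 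Restricting $F$ to $V_j$ and pushing forward by $\pi^{U_j}$ gives a holomorphic function on $\pi^{U_j}(V_j)$ that interpolates $f$ and is square-integrable there. This shows that the restriction map from the (local) Bergman space on $\pi^{U_j}(V_j)$ to $\ell^2(\Gamma_j, e^{-\vp_j})$ is surjective; since $\Gamma_j$ is supported near the border or near the puncture, a cutoff/solve-$\dbar$ argument exactly as in Theorems~\ref{loc-interp-glob-interp-disk} and~\ref{loc-interp-glob-interp-cyl} upgrades this to surjectivity of the restriction map onto $\sh^2(\D, e^{-\hat\vp_j}\omega_P)$ or $\sh^2(\C^*, e^{-\hat\psi_j}\omega_c)$. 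Then the necessity half of Theorem~\ref{pdisk-nec} (specifically the uniform separation argument in that proof, which only uses Bergman's inequality Proposition~\ref{bergman}(b)) applies to each $\Gamma_j$ and yields that $\Gamma_j$ is uniformly separated. As this holds for every $j$, $\Gamma$ is uniformly separated by definition.

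The main obstacle I anticipate is technical rather than conceptual: one must be careful that restricting the global solution $F$ to the end and applying the local machinery genuinely reproduces the hypotheses of the punctured-disk results — in particular that the metrics match on $V_j$ (handled by Proposition~\ref{asymp-poincs}), that the pushed-forward weight has the right curvature bounds (handled by conditions $(\star)$ and (B) in the statement of Theorem~\ref{main}), and that the transition between $V_j$ and $U_j$ does not spoil square-integrability, which is automatic since $V_j$ is relatively compact away from $\di K$ and carries the same metric. One should also note that the finitely many points of $\Gamma$ lying in $X_{\mathrm{core}}$ are irrelevant to uniform separation, since they form a finite set with no accumulation points; the only subtlety is separation between a point near one end and a point near another, but distinct ends are a uniform distance apart in $\omega_P$, so this too is automatic. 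Thus the proof is a routine packaging of the punctured-disk case, and I would present it in a few lines once the end-structure machinery is in place.
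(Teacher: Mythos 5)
Your proposal is correct and follows essentially the same route as the paper: reduce to the ends via Proposition~\ref{asymp-poincs}, observe that each $\Gamma\cap U_j$ (equivalently $\Gamma_j\subset\D^*$) is interpolating, and invoke the uniform-separation conclusion of Theorem~\ref{pdisk-nec}, handling the finitely many points of $\Gamma$ in $X_{\mathrm{core}}$ separately. The only difference is one of economy: the paper compresses the reduction into ``clearly,'' while you spell it out, and your detour through Theorems~\ref{loc-interp-glob-interp-disk}/\ref{loc-interp-glob-interp-cyl} is heavier than needed, since the uniform-separation half of Theorem~\ref{pdisk-nec} is a purely local estimate via Proposition~\ref{bergman}(b) (or its cylindrical analogue) and so applies directly to the restriction of the global extension $F$ to the end without first upgrading to full surjectivity on $\D$ or $\C^*$.
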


\begin{proof}
Clearly, for each $j$, $\Gamma \cap {U_j}$ is then an interpolation sequence in $\D^*$ that is supported either near the border or near the puncture.  It follows that each $\Gamma \cap {U_j}$ is uniformly separated.  Since $\Gamma$ is a closed discrete subset, $\Gamma - \bigcup _j \Gamma \cap U_j$ is finite.  Therefore $\Gamma$ is uniformly separated.
\end{proof}

\subsubsection{Density bound for interpolation sequences}

\begin{prop}
If $\Gamma$ is an interpolation sequence then $D^+_{\vp}(\Gamma) \le 1$.
\end{prop}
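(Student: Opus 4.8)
The plan is to reduce the statement, one end at a time, to the necessity half of the punctured-disk theorem, in exact parallel with the way Corollary \ref{density-est-cor} reduced the punctured-disk case to the disk and cylinder cases. Since $D^+_\vp(\Gamma)=\max_j\dot D^+_{\vp_j}(\Gamma_j)$ with $\Gamma_j=\pi^{U_j}(\Gamma\cap V_j)\subset\D^*$, it suffices to prove $\dot D^+_{\vp_j}(\Gamma_j)\le 1$ for each end $U_j$. Two preliminary remarks make this manageable. First, $U_j-V_j$ is relatively compact in $X$, so $\Gamma\cap(U_j-V_j)$ is finite; hence $\Gamma_j$ and $\pi^{U_j}(\Gamma\cap U_j)$ differ by finitely many points, and since the densities are insensitive to finite modifications we may use either. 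Second, $\overline{V_j}\cap\di X=\di X\cap\overline{U_j}$ is a single boundary component, so $\Gamma_j$ is supported near exactly one boundary component of $\D^*$ — the puncture if $U_j$ is a punctured end, the border if $U_j$ is an annular end — and accordingly $\dot D^+_{\vp_j}(\Gamma_j)$ equals $D^{*+}_{\vp_j}(\Gamma_j)$ or $D^{b+}_{\vp_j}(\Gamma_j)$ respectively.

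The heart of the argument is a localization step: I would show that the induced sequence $\Gamma_j$ is an interpolation sequence for the appropriate model space — for $(\D,\omega_P,\hat\vp_j)$ when $U_j$ is annular, and for $(\C^*,\omega_c,\hat\psi_j)$, with $\hat\psi_j$ equal near the puncture to $\vp_j+2\log\log\frac{1}{|z|^2}$, when $U_j$ is punctured — where, exactly as in Theorems \ref{loc-interp-glob-interp-disk} and \ref{loc-interp-glob-interp-cyl}, $\hat\vp_j$ and $\hat\psi_j$ are obtained from the transferred weight by adding a bounded, resp. quasi-subharmonic, correction large enough to supply the curvature needed below. Given a datum on $\Gamma_j$, transfer it through $\pi^{U_j}$ to $\Gamma\cap V_j$, extend it by zero to $\Gamma$, and apply surjectivity of $\sr_\Gamma$ to obtain $F\in\sh^2(X,e^{-\vp}\omega_P)$ interpolating it, with norm controlled by the interpolation constant of $\Gamma$. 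By Proposition \ref{asymp-poincs}, the Poincar\'e metric of $X$ and that of $U_j$ coincide on $V_j$, so after transfer $F$ is square-integrable with respect to the model weight on a full neighborhood of the relevant boundary component, while on the remaining relatively compact collar the metrics and weights are comparable. Cutting $F$ off by a function that is $1$ near that boundary component and vanishes off $\pi^{U_j}(V_j)$ produces a smooth function interpolating the datum whose $\dbar$ is smooth and compactly supported; correcting it by Theorem \ref{disk-odf-thm} in the annular case, or by H\"ormander's theorem on $\C^*$ in the punctured case (exactly as in Theorem \ref{loc-interp-glob-interp-cyl}), and using Corollary \ref{Bergman-sums-disk} or its cylindrical analogue from \cite{v-rs1} together with uniform separation to bound the resulting error on $\Gamma_j$ by a fixed fraction of the norm, one iterates and sums as in the proofs of Theorems \ref{loc-interp-glob-interp-disk} and \ref{loc-interp-glob-interp-cyl} to produce a genuine interpolant in the model space.

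Once $\Gamma_j$ is known to be an interpolation sequence in the model space, the necessity part of Theorem \ref{necess-disk} gives $D^{b+}_{\vp_j}(\Gamma_j)<1$ for annular ends (using that, for a weight smooth across the border, $D^{b+}$ coincides with the disk density $D^+$, as recorded in Paragraph \ref{bord-density-defn}), while \cite[Theorem~3.7]{v-rs1} gives $D^{*+}_{\vp_j}(\Gamma_j)\le 1$ for punctured ends. By Definition \ref{unif-sep-density-defn}(i) this yields $\dot D^+_{\vp_j}(\Gamma_j)\le 1$ for every $j$, and taking the maximum over $j$ gives $D^+_\vp(\Gamma)\le 1$, as claimed.

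I expect the localization step to be the only place where genuine work is needed. The delicate point is that the Poincar\'e metric of $X$ agrees with that of the model only on the collar $V_j$, not on all of $U_j$, so the cutoffs and the $\dbar$-correction must be arranged to live where the geometries match — together with the harmless relatively compact transition region — and one must verify that the slightly modified transferred weight meets the curvature hypotheses required to invoke the Donnelly--Fefferman--Ohsawa (or H\"ormander) solution operator. All of this is routine given Proposition \ref{asymp-poincs}, Theorems \ref{loc-interp-glob-interp-disk} and \ref{loc-interp-glob-interp-cyl}, and Theorem \ref{odf-gen}.
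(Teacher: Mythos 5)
Your proposal follows essentially the same route as the paper: localize to each end and reduce to the necessity statements already established in the model cases. The paper's own proof simply asserts (with a ``clearly'') that each $\Gamma\cap U_j$ is an interpolation sequence in $\D^*$ and then cites Theorem~\ref{pdisk-nec}; you make the underlying cut-off-and-$\dbar$-correction localization explicit, going directly to the disk and cylinder models rather than through $\D^*$, but this is the same machinery (Theorems~\ref{loc-interp-glob-interp-disk} and~\ref{loc-interp-glob-interp-cyl}) on which the paper's assertion implicitly rests.
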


\begin{proof}
Again, for each $j$, $\Gamma \cap {U_j}$ is an interpolation sequence in $\D^*$ that is supported either near the border or near the puncture.  Thus $\dot D^+_{\vp_j} (\Gamma \cap U_j) \le 1$ for all $j$.  That is to say, $D^+_{\vp}(\Gamma) \le 1$.  Of course, if there are only border-type boundary components, then $D^+_{\vp}(\Gamma) < 1$.
\end{proof}

\subsection{Sufficiency}

We shall follow the approach used to prove Theorem \ref{pdisk-ssuff}, which is the case of the punctured disk.

\subsubsection{Raw densities}

Our definitions of the upper density placed a condition on the Laplacian of some average of the weight.  If we use $\vp$ without averaging, the definition can still make sense.  In \cite{v-rs1} we called the resulting density the {\it raw density}.  The precise definition is 
\[
\check D ^{b+}_{\vp} (\Gamma) := \inf \left \{ \frac{1}{\alpha} > 0 \ ;\ \Delta \vp - 2\omega _P \ge \alpha \Upsilon ^{b, \Gamma}_r \right \}
\]
if $\Gamma$ is supported near a border curve, and 
\[
\check D ^{*+}_{\vp} (\Gamma) := \inf \left \{ \frac{1}{\alpha} > 0 \ ;\ \Delta \vp - 4\omega _P \ge \alpha \Upsilon ^{*, \Gamma}_r \right \}
\]
if $\Gamma$ is supported near a puncture.  (See Paragraph \ref{border-sing-par} for the definition of $\Upsilon ^{b, \Gamma}_r$, and Paragraph \ref{p-sing-par} for the definition of $\Upsilon ^{*, \Gamma}_r$.)  In the general case, the raw density 
\[
\check D^+_{\vp} (\Gamma)
\]
is the maximum of the (finitely many) raw densities of the sequences $\Gamma \cap U_j$

\subsubsection{Singularities along $\Gamma$}

Let ${\tilde T} \in \co (X)$ be any holomorphic function such that 
\[
{\rm Ord}({\tilde T}) = \Gamma.
\]
Now, in an annular neighborhood $A_j$ of a border curve, using our isometric coordinates, we can define a function $\lambda _j$ on $U_j$, which agrees, on $V_j$ (the outer part of the annulus) with the function $\lambda ^{\tilde T}_{r_j}$ defined in Paragraph \ref{border-sing-par}.  

In a punctured neighborhood $P_j$, we have another such function, $\lambda _j$, which agrees with the function $\tilde \lambda ^{\tilde T}_{r_j}$ in Paragraph \ref{p-sing-par}.

We then define a function $\lambda$ by cutting off the $\lambda _j$ and summing:
\[
\lambda  := \sum _{i=1} ^{n+m} \chi _j \lambda _j.
\]
Here $\chi _j$ is smooth, takes values in $[0,1]$, is supported in $U_j$, and is identically $1$ on $V_j$.

Let 
\[
L := X - \bigcup _j V_j.
\]
Then $L$ is compact, and therefore there is a positive constant $M$ such that 
\[
\log |{\tilde T}|^2 - \lambda  \le M \quad \text{on }L.
\]
On the other hand, the sub-mean value property for subharmonic functions implies that 
\[
\log |{\tilde T}|^2 - \lambda  \le 0 \quad \text{on each }V_j.
\]
Therefore 
\[
\log |{\tilde T}|^2 - \lambda \le M \quad \text{on }X.
\]
Letting $T := e^{-M} \tilde T$ (but keeping $\tilde T$ in the definition of $\lambda$), we have found functions $T$ and $\lambda$ such that 
\[
{\rm Ord}(T)= \Gamma \quad \text{and} \quad \sigma := |T|^2e^{-\lambda} \le 1.
\]

\subsubsection{Strong sufficiency}

We shall now prove the following theorem.  

\begin{d-thm}[Stong sufficiency: general case]\label{semi-strong-suff-general}
Let $X$ be a finite Riemann surface covered by the disk, and let $\vp \in L^1_{\ell oc}(X)$ be a weight satisfying the curvature hypotheses 
\begin{enumerate}
\item[(o)] $\Delta \vp \ge - \Theta$ for some smooth, nonnegative $(1,1)$-form $\Theta$, 
\item[(i)] $\Delta \vp -2\omega_P \ge m \omega_P$ in some annular neighborhood of each border curve, and 
\item[(ii)] $\Delta \vp - 4 \omega _P \ge m \omega _c$ in some punctured neighborhood of each puncture of $X$,
\end{enumerate}
for some constant $m>0$.  Assume $\Gamma \subset X$ is uniformly separated, and that 
\[
\check D^+_{\vp} (\Gamma) < 1.
\]
Then the restriction map $\sr _{\Gamma} :\sh ^2 (X,e^{-\vp} \omega )  \to \ell ^2 (\Gamma , e^{-\vp})$ is surjective.
\end{d-thm}

\begin{proof}
Let $f \in \ell ^2(\Gamma, e^{-\vp})$ be the datum to be extended.

As in the proof of the special case of the punctured disk, the density condition implies that there are holomorphic functions $F_j \in \co (U_j)$ such that 
\[
F_j(\gamma) = f(\gamma) , \quad \gamma \in V_j 
\]
and 
\[
\int _{U_j} |F_j|^2 e^{-\vp_j} \omega _P < +\infty.
\]

Next, let $\tilde L \relcomp X$ be a compact set with $L \relcomp {\rm interior}(\tilde L)$ and $\Gamma \cap (\tilde L - L) = \emptyset$.  Using the $L^2$ extension theorem locally, and then H\"ormander's Theorem, it is straightforward to construct a holomorphic function $F_o\in \co(X)$ such that 
\[
F_o|_{\Gamma \cap L} = f|_{\Gamma \cap L} \quad \text{and} \quad \int_{\tilde L} |F_o|^2e^{-\vp}\omega _P < +\infty.
\]

Now we wish to glue together all the extensions $F_o, F_1,...,F_N$ to produce an extension $F$ of $f$.  We can choose cut-off functions $\chi _o, \chi _1,...,\chi _N$ with $\chi _o |_L \equiv 1$ and $\chi _j|_{V_j}\equiv 1$, such that the function 
\[
\tilde F := \chi _o F_o + \chi _1 F_1 + ... \chi _N F_N
\]
is
\begin{enumerate}
\item[(a)] smooth, 
\item[(b)] holomorphic everywhere except possibly along collars connecting the ends $V_j$ to $L$ (which therefore do not meet $\Gamma$), and
\item[(c)] satisfies $\tilde F|_{\Gamma} = f$.
\end{enumerate}
The smooth form $\alpha := \dbar \tilde F$ is compactly supported, and satisfies 
\[
\int _{X} |\alpha|^2 _{\omega _P} e^{-\vp} \omega _P < +\infty.
\]
We wish to find $u \in L^2 (X, e^{-\vp} \omega)$ such that $\dbar u = \alpha$ and $u |_{\Gamma}= 0$.  To do so, we shall use the singular weight 
\[
\xi := \vp + \log |T|^2 - \lambda + \tau,
\]
where $\tau$ is chosen as in Lemma \ref{bumper} with respect to a form $\Omega$, with $\Omega$ to be chosen momentarily.  We know that $\xi \le \vp + \max \tau$.  Next we calculate, as in the proof of Theorem \ref{pdisk-ssuff}, that for sufficiently large $r_j$ (used in the definition of $\lambda$ above), 
\[
\Delta \xi - 2\omega _P \ge   \Delta \vp -2\omega _P -\Delta \lambda  +\Delta \tau \ge m \omega _P +\Omega +\Delta \tau,
\]
where $\Omega$ is a continuous form with compact support in the interior of $\tilde L$.  Theorem \ref{odf-gen} therefore implies the existence of a function $u \in L^1_{\ell oc}(X)$ such that 
\[
\dbar u = \alpha \quad \text{and} \quad \int _X |u|^2e^{-\vp} \omega _P \le \int _X |u|^2 e^{-\psi} \omega _P < +\infty.
\]
Again $\dbar u = \alpha$ implies that $u$ is smooth, and the finiteness of the second integral means that $u$ must vanish on $\Gamma$.  Therefore 
\[
F := \tilde F - u
\]
solves the interpolation problem for $f$, and our proof is complete.
\end{proof}

\subsubsection{Conclusion of the proof of Theorem \ref{main}}

To obtain the sufficiency part of Theorem \ref{main}, we need to replace $\vp$ by some sort of average $\vp _r$ of $\vp$ such that 
\begin{enumerate}
\item[(i)] $\vp _r$ still satisfies the curvature conditions $(B)$ and $(\star)$ of Theorem \ref{main}, and 
\item[(ii)] $\sh ^2(X, e^{-\vp_r}\omega) \cong \sh ^2(X,e^{-\vp}\omega)$ and $\ell ^2(\Gamma, e^{-\vp_r}) \cong \ell ^2(\Gamma,e^{-\vp})$ as topological vector spaces, i.e., the isomorphisms are bounded linear maps.
\end{enumerate}

We already know how to do this in the ends, since we have done so in the punctured disk.  In the interior it doesn't matter how we do it, since densities are determined at the ends.  For the sake of deciding on one method, we can cover our compact set $\tilde L$ by a finite number of open coordinate charts biholomorphic to disks, and simply replace $\vp$ by its average over a disk of some fixed radius.

After averaging $\vp$ in this way, we multiply the $\vp _{i,r}$ of the end by the cutoff functions $\chi _i$, and multiply the interior averages by any smooth cutoff functions that give a partition of unity on $\tilde L$.  (Again, what we do in the interior is not so important.)  If we now sum up all of the cut off averages to form $\tilde \vp_r:= \sum _i \vp _{i,r}$, then clearly 
\[
D^+_{\vp} (\Gamma) = \check D ^+_{\tilde \vp_r} (\Gamma).
\]
The proof of Theorem \ref{main} is complete.
\qed

\section{Remark on Shapiro-Shields Interpolation}

In \cite{v-rs1} we mentioned that there is another, more classical, theory on interpolation, which is well-defined in the category of Hilbert spaces of holomorphic functions over a Riemann surface (or more general complex manifold).  We refer the reader to \cite{v-rs1} for the definition of Shapiro-Shields Interpolation.  We showed there that in the asymptotically flat case, our notion of interpolation coincides with Shapiro-Shields interpolation.  

As it turns out, the same is true for the more general interpolation of the present paper, i.e., it is equivalent to Shapiro-Shields Interpolation.  Following the ideas of \cite{v-rs1}, it suffices to establish the following proposition.

\begin{prop}\label{ssi}
Let $(X,\omega_P)$ be a finite Hyperbolic Riemann surface, and suppose the weight function $\vp$ satisfies the curvature hypotheses $(\star)$ and $(B)$ of Theorem \ref{main}.  Then there is a constant $C$ such that 
\[
C^{-1} \le K(z,z)e^{-\vp(z)} A_{\omega_P}(z)\le C,
\]
where $K$ is the Kernel of the Bergman projection $P: L^2(X, e^{-\vp}\omega) \to \sh ^2 (X, e^{-\vp} \omega)$.
\end{prop}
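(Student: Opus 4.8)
The plan is to combine the extremal characterization of the Bergman kernel with two ingredients already available: a sub-mean value inequality carrying the correct geometric weight (which gives the upper bound on $K$) and one-point interpolation with a norm estimate that scales like $A_{\omega_P}(z)$ (which gives the lower bound on $K$). Recall first that, for the reproducing kernel $K$ of $\sh^2(X,e^{-\vp}\omega_P)$,
\[
K(z,z)e^{-\vp(z)} = \sup \left\{ |f(z)|^2 e^{-\vp(z)}\ ;\ f \in \sh^2(X,e^{-\vp}\omega_P),\ \|f\|^2 \le 1 \right\},
\]
where $\|f\|^2 = \int_X |f|^2 e^{-\vp}\omega_P$. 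Hence Proposition \ref{ssi} is equivalent to the pair of estimates
\begin{enumerate}
\item[(U)] $|f(z)|^2 e^{-\vp(z)} A_{\omega_P}(z) \le C \int_X |f|^2 e^{-\vp}\omega_P$ for every $f \in \sh^2(X,e^{-\vp}\omega_P)$ and $z \in X$; and
\item[(L)] for each $z \in X$ there is $F_z \in \sh^2(X,e^{-\vp}\omega_P)$ with $|F_z(z)|^2 e^{-\vp(z)} = 1$ and $\int_X |F_z|^2 e^{-\vp}\omega_P \le C A_{\omega_P}(z)$.
\end{enumerate}
Since $X_{\rm core} \relcomp X$, on the core $\vp$, $\omega_P$ and $A_{\omega_P}$ are uniformly comparable to constants and both (U) and (L) reduce to standard local facts about Bergman spaces; the content is therefore the behaviour near the boundary, which we treat end by end using Proposition \ref{asymp-poincs}.

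For (U): near a border curve the Poincar\'e metric of $X$ agrees with that of an annulus, whose universal cover is $\D$; condition (B) forces $\Delta\vp$ to be comparable to $\omega_P$ there, so pulling back to a pseudohyperbolic disk of fixed radius and applying Proposition \ref{bergman}(a) gives $|f(z)|^2 e^{-\vp(z)} \le C\int_X|f|^2e^{-\vp}\omega_P$, and since $A_{\omega_P}(z)$ is bounded near border curves this is (U). Near a puncture, set $\psi := \vp + 2\log\log\tfrac{1}{|z|^2}$; by the computations of Paragraph \ref{cyl-geom-paragraph} one has $e^{-\psi}\omega_c = e^{-\vp}\omega_P$ and $\Delta\psi = \Delta\vp - 4\omega_P$, which is comparable to $\omega_c$ by $(\star)$, so the cylindrical analogue of Proposition \ref{bergman}(a) proved in \cite{v-rs1} gives $|f(z)|^2 e^{-\psi(z)} \le C\int_X|f|^2e^{-\vp}\omega_P$, that is, $|f(z)|^2 e^{-\vp(z)} \le C\,(\log\tfrac{1}{|z|^2})^2 \int_X|f|^2e^{-\vp}\omega_P$; multiplying by $A_{\omega_P}(z)$ and using the asymptotics \eqref{puncture-a} yields (U).

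For (L): fix $z$ and consider the one-point sequence $\Gamma = \{z\}$. By the very definition of $\ell^2(\{z\},e^{-\vp})$ the datum $\gamma \mapsto e^{\vp(z)/2}$ has norm exactly $A_{\omega_P}(z)^{1/2}$. A one-point sequence is trivially uniformly separated and has upper density $0<1$, so Theorem \ref{main}---or, more transparently, the strong sufficiency Theorem \ref{semi-strong-suff-general} applied with $\Gamma = \{z\}$---provides $F_z \in \sh^2(X,e^{-\vp}\omega_P)$ with $F_z(z) = e^{\vp(z)/2}$ (hence $|F_z(z)|^2 e^{-\vp(z)} = 1$) and $\int_X|F_z|^2e^{-\vp}\omega_P \le \sa^2 A_{\omega_P}(z)$, where $\sa$ is the interpolation constant. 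The point is that $\sa$ can be taken independent of $z$: near a border curve this follows from the uniform local extension of Lemma \ref{loc-extn-disk} together with the $\dbar$-solution of Theorem \ref{odf-gen}, whose constant depends only on the surface and the curvature constant $m$; near a puncture it follows from the $L^2$ extension theorem in the cylindrical metric, where the factor $A_{\omega_P}(z) \sim (\log\tfrac{1}{|z|^2})^{-2}$ is precisely the one generated by the identity $e^{-\vp}\omega_P = e^{-\psi}\omega_c$, again with a constant depending only on the geometry of the end. Then $K(z,z)e^{-\vp(z)} \ge |F_z(z)|^2 e^{-\vp(z)} / \int_X|F_z|^2e^{-\vp}\omega_P \ge \sa^{-2} A_{\omega_P}(z)^{-1}$, which is (L), and Proposition \ref{ssi} follows.

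The main obstacle is the uniformity in $z$ of the one-point interpolation constant $\sa$ all the way up to the boundary, and in particular the bookkeeping of the weight $A_{\omega_P}(z)$ near the punctures: one must check that the extra $(\log\tfrac{1}{|z|^2})^{-2}$ produced by passing from the Poincar\'e metric to the cylindrical metric is exactly matched by $A_{\omega_P}(z)$, rather than being under- or over-counted. Everything else---the extremal property of $K$, the two Bergman inequalities, and the reduction to the ends via compactness of the core---is routine given the results established earlier in the paper.
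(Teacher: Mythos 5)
Your proposal is correct and follows the same route as the paper's own (very brief) sketch: reduce to the ends by compactness of $X_{\rm core}$, obtain one direction from the Bergman-type inequality (Proposition \ref{bergman}(a) near border curves, its cylindrical analogue from \cite{v-rs1} near punctures after passing to $\psi = \vp + 2\log\log\tfrac{1}{|z|^2}$ so that $e^{-\psi}\omega_c = e^{-\vp}\omega_P$), and obtain the other from uniform one-point extension via the Donnelly--Fefferman--Ohsawa $\dbar$-estimate, with $A_{\omega_P}(z)\sim(\log\tfrac{1}{|z|^2})^{-2}$ accounting for the cylindrical-to-hyperbolic change of metric. Your separation into (U) and (L) and the bookkeeping of the $A_{\omega_P}$ factor are exactly what the paper leaves to the reader.
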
 

As we mentioned in \cite{v-rs1}, the equivalence of our notion of interpolation with the Shapiro-Shields notion, though interesting, is not strictly necessary for the present article.  For this reason, and since the proof of Proposition \ref{ssi} is almost directly analogous to its asymptotically flat analog, we will content ourselves here with just a sketch, and leave details to the interested reader.

As in \cite{v-rs1}, it suffices to establish the estimates at the ends.  In a bordered end, the upper bound is obtained as in the flat case, but using Ohsawa's Theorem in place of H\"ormander's Theorem.  The use of Ohsawa's Theorem is possible because of Condition $(B)$.  The lower bound is softer, and obtained from the work in Paragraph \ref{bdd-lap} in a manner analogous to the flat case.

In a punctured end, one uses the fact that $A_{\omega_P}(z) \sim (\log |z|^2)^{-2}$.  This transforms the punctured end to a cylindrical end, and the results of \cite{v-rs1} apply directly.

\end{document}